\documentclass[11pt,reqno]{article}

\usepackage[T1]{fontenc}
\usepackage{lmodern}
\usepackage{amsmath,amssymb,amsthm,amsfonts}
\usepackage{microtype}
\usepackage{geometry}
\usepackage{hyperref}
\hypersetup{colorlinks=true,linkcolor=blue,citecolor=blue,urlcolor=blue}

\newcommand{\seqnum}[1]{\href{https://oeis.org/#1}{#1}}

\theoremstyle{plain}
\newtheorem{theorem}{Theorem}
\newtheorem{corollary}[theorem]{Corollary}
\newtheorem{lemma}[theorem]{Lemma}
\newtheorem{proposition}[theorem]{Proposition}

\theoremstyle{definition}
\newtheorem{definition}[theorem]{Definition}
\newtheorem{example}[theorem]{Example}
\newtheorem{conjecture}[theorem]{Conjecture}

\theoremstyle{remark}
\newtheorem{remark}[theorem]{Remark}

\title{Counting Residue-Class Survivor Sets: An Asymptotic Comparison with Prime-Admissible Sets}
\author{Mario Raso\thanks{Department of Computer Science, Sapienza University of Rome, Viale Regina Elena 295, 00161 Rome, Italy. Email: \href{mailto:raso@di.uniroma1.it}{raso@di.uniroma1.it}.}
\and Daniele Venturi\thanks{Department of Computer Science, Sapienza University of Rome, Viale Regina Elena 295, 00161 Rome, Italy. Email: \href{mailto:venturi@di.uniroma1.it}{venturi@di.uniroma1.it}.}}
\date{}

\begin{document}
\maketitle

\begin{abstract}
For $n\geq1$, choose one residue class $r_k\pmod{k}$ for every
$2\leq k\leq n$, and retain those $m\in\{2,\ldots,n+1\}$ for which
$m\not\equiv r_k\pmod{k}$ whenever $k<m$.  Let $N(n)$ be the number
of distinct survivor sets obtained in this way.  We prove
\[
\log 2\leq
\liminf_{n\to\infty}\frac{\log N(n)}{n/\log n}
\leq
\limsup_{n\to\infty}\frac{\log N(n)}{n/\log n}
\leq2\log 2.
\]
If $A_{\rm adm}(n)$ denotes the number of subsets of $\{2,\ldots,n+1\}$ that omit at least one residue class modulo every prime, then our main comparison is
\[
\log N(n)=\log A_{\rm adm}(n)+o\!\left(\frac{n}{\log n}\right).
\]
We also obtain an asymptotic formula for the number of distinct intersections of survivor sets with the primes, and an exact recurrence $N(n+1)=N(n)+E(n)$, where $E(n)$ counts the survivor sets that can be extended by the new point $n+2$.
\end{abstract}

\noindent\textbf{Mathematics Subject Classification 2020.}
Primary 11N35; Secondary 05A15, 05A16, 11A07, 11B75.

\noindent\textbf{Keywords.}
Sieve methods, residue classes, prime-admissible sets, enumerative combinatorics, inclusion--exclusion, block complexity.

\section{Introduction}
\label{sec:introduction}

For an integer $n\geq1$, set $X_n=\{2,\ldots,n+1\}$.  Throughout, $\mathbb P$ denotes the set of primes. A truncated residue profile is a tuple
\[
r=(r_2,\ldots,r_n),\qquad r_k\in\mathbb Z/k\mathbb Z,
\]
and its survivor set is
\[
S_n(r)=\{m\in X_n:m\not\equiv r_k\pmod{k}
\text{ for every }2\leq k<m\}.
\]
Thus the class chosen modulo $k$ acts only on candidates larger than
$k$.  Define
\[
\mathcal F_n=\{S_n(r):r=(r_2,\ldots,r_n)\},
\qquad N(n)=|\mathcal F_n|.
\]
The counting problem concerns distinct outputs of the map
$ (r_2,\ldots,r_n)\mapsto S_n(r)$, rather than the number of profiles.

The construction lies near two established arithmetic settings. Covering systems and uncovered residue-class sets study unions and complements of congruence classes; the cited work includes quantitative bounds for uncovered sets, restrictions on distinct covering systems, and related finite-interval questions \cite{FilasetaFordKonyaginPomeranceYu,BalisterEtAlDensity,Hough,HoughNielsen,ZillerMorack}.
Those results provide context rather than estimates for $N(n)$: here every modulus $2,\ldots,n$ receives a class, the class modulo $k$ becomes active only above $k$, and all distinct finite outputs are counted.

A set $A\subseteq X_n$ is prime-admissible when it omits at least one residue class modulo every prime.  Write $A_{\rm adm}(n)$ for the number of such subsets. These sets form a downward-closed family. By contrast, $\mathcal F_n$ is not downward-closed: the restriction that modulus \(k\) acts only on candidates larger than \(k\), together with the composite moduli, imposes additional structure. Our main result is the comparison
\[
\log N(n)=\log A_{\rm adm}(n)+o\!\left(\frac{n}{\log n}\right).
\]
The proof uses an auxiliary downward-closed family determined by the prime-modulus restrictions. Composite moduli then allow prescribed large elements to be removed with subexponential loss on the \(n/\log n\) scale.

Several exact and asymptotic results accompany this comparison.
The number $P(n)$ of locally distinct profiles has a closed product formula, and inclusion--exclusion gives an exact formula for the number of profiles realizing any prescribed survivor set.  We prove
\[
\log 2
\leq\liminf_{n\to\infty}\frac{\log N(n)}{n/\log n}
\leq\limsup_{n\to\infty}\frac{\log N(n)}{n/\log n}
\leq2\log 2.
\]
If $N_{\mathbb P}(n)$ counts the distinct sets $S_n(r)\cap\mathbb P$,
then
\[
\log N_{\mathbb P}(n)\sim(\log2)\frac{n}{\log n}.
\]
We also characterize extendible survivor sets and prove the exact recurrence
\[
N(n+1)=N(n)+E(n).
\]
The recurrence also provides an exact enumeration scheme for the initial
values.

For comparison with symbolic dynamics, the support of a length-$n$ block of the full $\mathbb P$-admissible subshift is exactly a prime-admissible subset of an interval of length $n$. Kasjan, Lema\'nczyk, and Zuniga Alterman proved unconditional block-complexity bounds with respective bases
2 and 4 \cite[Theorem~1.1]{KasjanLemanczykZuniga}. Translation and zero-extension identify their finite block family with the family counted by $A_{\rm adm}(n)$; it is the full admissible subshift, rather than the orbit closure of the prime indicator, that is used here. The asymptotic comparison above therefore connects the survivor-set model to that block-complexity problem.

The sequence \(N(n)\) arose from the first author's doctoral research and was subsequently submitted by him to the OEIS, where it is recorded as entry \seqnum{A396595}; see \cite{OEISA396595}. The underlying construction is also discussed in his doctoral thesis \cite{RasoThesis}. The present paper develops the structural and asymptotic study of this sequence.
Appendix \ref{subsec:global-profile} gives a global formulation in which the finite survivor sets arise by restriction.

Sections~\ref{sec:local-profiles}--\ref{sec:asymptotic-bounds} develop the local count, the exact representation formula, and the asymptotic bounds.
Sections~\ref{sec:prime-traces}--\ref{sec:algorithms} treat prime intersections, the comparison with prime-admissible sets, extendibility, and exact computation.  The appendices discuss global profiles and computational reproducibility.

\section{Local profiles and the explicit count \texorpdfstring{\(P(n)\)}{P(n)}}
\label{sec:local-profiles}
We begin with the local behavior of each modulus.

\begin{definition}
For fixed integers $n\ge 1$ and $2\le k\le n$, and for a residue class $a\in \mathbb Z/k\mathbb Z$, define
\[
E^{(n)}_{k,a}:=\{m\in\{k+1,\dots,n+1\}: m\equiv a \pmod{k}\}.
\]
This is the set of elements of $\{2,\dots,n+1\}$ eliminated by the single congruence
\[
m\equiv a\pmod{k}.
\]
\end{definition}

\begin{lemma}\label{lem:local-count}
For fixed integers $n\ge 1$ and $2\le k\le n$, the number of distinct sets among
\[
E^{(n)}_{k,a}, \qquad a\in \mathbb Z/k\mathbb Z,
\]
is
\[
c_{k,n}=\min(k,n-k+2).
\]
\end{lemma}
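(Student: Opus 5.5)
The plan is to identify $E^{(n)}_{k,a}$ with an arithmetic progression inside the window $W_k=\{k+1,\dots,n+1\}$, which has exactly $L=n-k+1\ge 1$ elements, and count the distinct intersections. Every residue class mod $k$ meets the $k$ consecutive integers $k+1,\dots,2k$ exactly once. So each $a\in\mathbb Z/k\mathbb Z$ has a unique least representative $m_a\in\{k+1,\dots,2k\}$. If $m_a\le n+1$, this gives
\[
E^{(n)}_{k,a}=\{m_a,m_a+k,m_a+2k,\dots\}\cap W_k ,
\]
and otherwise $E^{(n)}_{k,a}=\emptyset$. The key observation is that a nonempty $E^{(n)}_{k,a}$ determines $a$, because its minimum is $m_a$ and $m_a\equiv a\pmod k$. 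Hence distinct classes with nonempty elimination sets give distinct sets, and all classes with empty elimination sets collapse to the single set $\emptyset$.

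The count then splits into two cases according to whether the window covers a full period.

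\emph{Case $L\ge k$, i.e.\ $k\le n-k+1$.} Then $2k\le n+1$, so every $m_a$ lies in $W_k$. All $k$ sets are nonempty and pairwise distinct, giving $k$ sets. In this case $n-k+2\ge k+1>k$, so the minimum is $k$.

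\emph{Case $L<k$.} The classes with nonempty sets are exactly those represented by $k+1,\dots,n+1$. These are $L$ distinct classes, since $L<k$, and each such set is a singleton $\{m\}$ because $m+k>n+1$. The remaining $k-L\ge1$ classes all give $\emptyset$. The total is $L+1=n-k+2$, and here $n-k+2\le k$, so again it equals the minimum.

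There is no real obstacle here. The only points needing care are the boundary: that the window is nonempty (using $k\le n$), and that in the second case at least one class really produces $\emptyset$ (using $L<k$). Both cases combine to give $c_{k,n}=\min(k,n-k+2)$.
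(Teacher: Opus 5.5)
Your proof is correct and follows the same route as the paper: you split according to whether the window $\{k+1,\dots,n+1\}$, of length $n-k+1$, contains a full residue system modulo $k$, giving $k$ distinct sets in the first case and $n-k+1$ singletons plus $\varnothing$ in the second. Your observation that a nonempty $E^{(n)}_{k,a}$ determines $a$ through its minimum makes explicit the pairwise distinctness that the paper only asserts, and you also check in each case that the count equals $\min(k,n-k+2)$.
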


\begin{proof}
Suppose first that
\[
k\le \frac{n+1}{2}.
\]
Then the interval $\{k+1,\dots,n+1\}$ has cardinality
\(n-k+1\ge k,\)
so it contains at least one complete system of residues modulo $k$. Hence every residue class modulo $k$ occurs at least once in this interval, and the sets $E^{(n)}_{k,a}$ are pairwise distinct. Therefore
\[
c_{k,n}=k
\]
in this case.

Now suppose that
\[
k> \frac{n+1}{2}.
\]
Then the interval $\{k+1,\dots,n+1\}$ has cardinality
\[
n-k+1<k.
\]
Hence two distinct elements of this interval cannot be congruent modulo $k$. Therefore each residue class that occurs gives a singleton set, while every residue class that does not occur gives the empty set. Since exactly $n-k+1$ residue classes occur, the number of distinct sets is
\[
(n-k+1)+1=n-k+2.
\]
Thus in this case
\[
c_{k,n}=n-k+2.
\]
Combining the two cases yields
\[
c_{k,n}=\min(k,n-k+2),
\]
as claimed.
\end{proof}

\begin{definition}
Two truncated profiles
\[
r=(r_2,\dots,r_n), \qquad s=(s_2,\dots,s_n),
\]
are said to be \emph{locally equivalent} if
\[
E^{(n)}_{k,r_k}=E^{(n)}_{k,s_k}
\qquad\text{for every }2\le k\le n.
\]
\end{definition}

\begin{theorem}\label{thm:Pn}
Let $P(n)$ denote the number of local equivalence classes of truncated profiles of length $n$. Then
\[
P(n)=\prod_{k=2}^{n}\min(k,n-k+2).
\]
Equivalently,
\[
P(n)=
\begin{cases}
(m!)^2, & \text{if } n=2m-1,\\[4pt]
m!(m+1)!, & \text{if } n=2m.
\end{cases}
\]
\end{theorem}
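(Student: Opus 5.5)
The plan has two parts: first derive the product formula from Lemma~\ref{lem:local-count}, then evaluate the product by splitting the range of $k$ at the point where $k$ and $n-k+2$ cross.

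For the product formula, observe that local equivalence is a coordinatewise condition. The class of a profile $r$ is determined by, and in turn determines, the tuple
\[
\bigl(E^{(n)}_{2,r_2},\dots,E^{(n)}_{n,r_n}\bigr).
\]
So the equivalence classes are in bijection with the Cartesian product over $k$ of the sets of realizable values of $E^{(n)}_{k,a}$. The $r_k$ range independently over $\mathbb Z/k\mathbb Z$, so every tuple in this product is realized. Lemma~\ref{lem:local-count} says the $k$-th factor has exactly $c_{k,n}=\min(k,n-k+2)$ elements. Multiplying over $k$ gives $P(n)=\prod_{k=2}^{n}\min(k,n-k+2)$.

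For the closed form, note that $\min(k,n-k+2)=k$ exactly when $k\le (n+2)/2$, and equals $n-k+2$ otherwise.

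\emph{Case $n=2m-1$.} Here the threshold is $m+\tfrac12$. The range $2\le k\le m$ contributes $\prod_{k=2}^m k=m!$. For $m+1\le k\le 2m-1$ the factor is $2m+1-k$, which runs over $m,m-1,\dots,2$. This contributes another $m!$, giving $(m!)^2$.

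\emph{Case $n=2m$.} Here the threshold is $m+1$. The range $2\le k\le m+1$ contributes $(m+1)!$. For $m+2\le k\le 2m$ the factor is $2m+2-k$, which runs over $m,\dots,2$. This contributes $m!$, giving $m!(m+1)!$.

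The only real point of care is the boundary value $k=\lfloor (n+2)/2\rfloor$, where the two expressions may coincide; the computation above handles it. One should also check the small cases $n=1$ (empty product, $m=1$) and $n=2$ (one factor, $m=1$), where the formulas give $1$ and $2$ respectively.
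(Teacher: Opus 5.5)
Your proposal is correct and follows essentially the same route as the paper: the product formula comes from Lemma~\ref{lem:local-count} together with coordinatewise independence, and the closed form comes from splitting the range of $k$ at $\lfloor (n+2)/2\rfloor$ in the two parity cases. Your explicit bijection with the Cartesian product of the realizable local sets, and your checks of $n=1,2$, are slightly more careful than the paper's wording but do not change the substance of the argument.
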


\begin{proof}
By Lemma~\ref{lem:local-count}, for each fixed $k$ there are exactly
\(c_{k,n}=\min(k,n-k+2)\)
distinct local choices. Since these choices are independent for different $k$, the total number of local equivalence classes is
\[
P(n)=\prod_{k=2}^{n} c_{k,n}
=\prod_{k=2}^{n}\min(k,n-k+2).
\]

It remains to simplify the product. If $n=2m-1$, then
\[
\min(k,2m-k+1)=
\begin{cases}
k, & 2\le k\le m,\\
2m-k+1, & m+1\le k\le 2m-1.
\end{cases}
\]
Hence
\[
P(2m-1)
=\Bigl(\prod_{k=2}^{m} k\Bigr)\Bigl(\prod_{k=m+1}^{2m-1}(2m-k+1)\Bigr)
=(2\cdot 3\cdots m)(m\cdot (m-1)\cdots 2)
=(m!)^2.
\]

If $n=2m$, then
\[
\min(k,2m-k+2)=
\begin{cases}
k, & 2\le k\le m+1,\\
2m-k+2, & m+2\le k\le 2m.
\end{cases}
\]
Thus
\[
P(2m)
=\Bigl(\prod_{k=2}^{m+1} k\Bigr)\Bigl(\prod_{k=m+2}^{2m}(2m-k+2)\Bigr)
=(2\cdot 3\cdots (m+1))(m\cdot (m-1)\cdots 2)
=m!(m+1)!.
\]
This completes the proof.
\end{proof}

\begin{remark}
The explicit sequence \(P(n)\) satisfies
\(P(n)=\seqnum{A010551}(n+1)\); see \cite{OEISA010551}.  Here it is used as a local upper bound for the distinct-output count $N(n)$. The latter is closer in form to counting questions for covering systems and uncovered residue-class structures; compare
\cite{BalisterEtAlStructure,FilasetaFordKonyaginPomeranceYu}.
\end{remark}

\section{An exact representation formula for \texorpdfstring{\(N(n)\)}{N(n)}}
\label{sec:exact-representation}

Recall that \(\mathcal F_n\) is the family of survivor sets and that
\(N(n)=|\mathcal F_n|\).

\begin{definition}
Let
\[
X_n:=\{2,3,\dots,n+1\}.
\]
For \(B\subseteq X_n\) and \(2\le k\le n\), define
\[
\nu_k(B):=
\#\{b\bmod k:\ b\in B,\ b>k\}.
\]
For \(A\subseteq X_n\), put
\[
R_n(A):=
\sum_{Y\subseteq X_n\setminus A}
(-1)^{|Y|}
\prod_{k=2}^{n}
\bigl(k-\nu_k(A\cup Y)\bigr).
\]
\end{definition}

\begin{theorem}[An exact formula for \(N(n)\)]\label{thm:exact-Nn}
For every integer \(n\ge 1\) and every subset \(A\subseteq X_n\), the integer
\(R_n(A)\) is exactly the number of truncated profiles
\[
r=(r_2,\dots,r_n),\qquad r_k\in \mathbb Z/k\mathbb Z,
\]
such that
\[
S_n(r)=A.
\]
Consequently,
\[
A\in\mathcal F_n
\quad\Longleftrightarrow\quad
R_n(A)>0,
\]
and hence
\[
N(n)=
\sum_{A\subseteq X_n}
\min\{1,R_n(A)\}.
\]
\end{theorem}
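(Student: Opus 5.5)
The plan is a standard Möbius inversion over the Boolean lattice of subsets of \(X_n\setminus A\). For \(B\subseteq X_n\), let \(G(B)\) be the number of profiles \(r=(r_2,\dots,r_n)\) with \(B\subseteq S_n(r)\). Let \(F(A)\) be the number of profiles with \(S_n(r)=A\). The proof has three steps: compute \(G\) explicitly, relate \(G\) to \(F\), and invert that relation.

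First we compute \(G(B)\). The condition \(B\subseteq S_n(r)\) says that for every \(b\in B\) and every \(k\) with \(2\le k<b\), we have \(r_k\not\equiv b\pmod k\). Regroup this by modulus: for each fixed \(k\in\{2,\dots,n\}\), the residue \(r_k\) must avoid exactly the residues \(b\bmod k\) with \(b\in B\) and \(b>k\). By definition there are \(\nu_k(B)\) such residues, all distinct. The constraints on different \(k\) involve different coordinates, so they are independent, and
\[
G(B)=\prod_{k=2}^{n}\bigl(k-\nu_k(B)\bigr).
\]
Each factor is nonnegative because \(\nu_k(B)\le k\). The one point to check is that \(b\le n+1\) is never constrained by a modulus \(k\ge b\). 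This holds because the profile has only indices \(k\le n\), and the condition \(k<b\) is exactly the condition \(b>k\) built into \(\nu_k\).

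Next, for \(A\subseteq X_n\), we relate \(G\) and \(F\). Each profile with \(A\subseteq S_n(r)\) has a unique survivor set \(S_n(r)=A\cup Z\) with \(Z\subseteq X_n\setminus A\). Hence
\[
G(A)=\sum_{Z\subseteq X_n\setminus A}F(A\cup Z),
\]
and the same identity holds with \(A\) replaced by \(A\cup Y\) for every \(Y\subseteq X_n\setminus A\).

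We then substitute this into the alternating sum \(\sum_{Y}(-1)^{|Y|}G(A\cup Y)\) and exchange the order of summation. The coefficient of \(F(A\cup Z)\) becomes \(\sum_{Y\subseteq Z}(-1)^{|Y|}\). This equals \(1\) when \(Z=\emptyset\) and \(0\) otherwise. Therefore \(R_n(A)=F(A)\), which is the first claim.

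The consequences are then immediate. \(R_n(A)\) counts profiles, so it is a nonnegative integer, and it is positive exactly when \(A\) is realized, that is, when \(A\in\mathcal F_n\). Summing \(\min\{1,R_n(A)\}\) over all \(A\subseteq X_n\) therefore counts \(|\mathcal F_n|=N(n)\).

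No step is a real obstacle. The only care needed is in the first computation: we must confirm that the residues avoided modulo \(k\) are counted without multiplicity, which is why \(\nu_k\) counts distinct classes. We must also confirm that elements \(b\le k\) impose no condition on \(r_k\). Both follow directly from the definition of \(S_n(r)\).
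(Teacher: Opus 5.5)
Your proposal is correct and follows the paper's proof. Both arguments count the profiles under which every element of \(A\cup Y\) survives as \(\prod_{k=2}^{n}\bigl(k-\nu_k(A\cup Y)\bigr)\), treating each modulus independently, and then apply inclusion--exclusion over \(Y\subseteq X_n\setminus A\); your M\"obius-inversion phrasing simply makes explicit the cancellation \(\sum_{Y\subseteq Z}(-1)^{|Y|}=0\) for \(Z\neq\varnothing\), which the paper leaves implicit.
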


\begin{proof}
Fix \(A\subseteq X_n\). Let \(B=X_n\setminus A\). We first count profiles for which all elements of \(A\) survive. For a fixed modulus \(k\), the residue \(r_k\) is forbidden to lie in any residue class represented modulo \(k\) by
an element \(a\in A\) with \(a>k\). The number of forbidden residue classes is
\(\nu_k(A),\)
so the number of admissible choices for \(r_k\) is
\(k-\nu_k(A).\)

More generally, if \(Y\subseteq B\), the number of profiles for which every element of \(A\cup Y\) survives is
\[
\prod_{k=2}^{n}\bigl(k-\nu_k(A\cup Y)\bigr).
\]
Indeed, for each \(k\), one must avoid precisely the residue classes modulo \(k\) represented by elements of \(A\cup Y\) that are larger than \(k\).

Now use inclusion--exclusion over the elements of \(B\). Starting from the profiles for which all elements of \(A\) survive, we exclude those for which at least one element of \(B\) also survives. Thus the number of profiles for which exactly the elements of \(A\) survive is
\[
\sum_{Y\subseteq B}
(-1)^{|Y|}
\prod_{k=2}^{n}
\bigl(k-\nu_k(A\cup Y)\bigr)
=
R_n(A).
\]
Therefore \(R_n(A)\) counts exactly the profiles \(r\) with \(S_n(r)=A\).

It follows immediately that \(A\in\mathcal F_n\) if and only if at least one profile realizes \(A\), which is equivalent to \(R_n(A)>0\). Since \(R_n(A)\) is a nonnegative integer, the number of distinct survivor sets is
\[
N(n)=
\sum_{A\subseteq X_n}
\min\{1,R_n(A)\}.
\]
This proves the formula.
\end{proof}

\begin{proposition}\label{prop:NleP}
For every integer $n\ge 1$,
\[
N(n)\le P(n).
\]
\end{proposition}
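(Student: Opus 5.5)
The plan is to show that the survivor map $r\mapsto S_n(r)$ factors through local equivalence classes. Once that is done, $N(n)$ is at most the number of such classes, which is $P(n)$ by Theorem~\ref{thm:Pn}.

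The key step is to rewrite the survivor set in terms of the sets $E^{(n)}_{k,r_k}$. For $m\in X_n$ and $2\le k\le n$, the condition ``$k<m$ and $m\equiv r_k\pmod k$'' says exactly that $m\in E^{(n)}_{k,r_k}$. This holds because $E^{(n)}_{k,r_k}$ consists of the elements of $\{k+1,\dots,n+1\}$ congruent to $r_k$, and $k<m\le n+1$ is the same as $m\in\{k+1,\dots,n+1\}$. For $m\in X_n$, every $k$ with $2\le k<m$ satisfies $k\le n$, so the range of moduli in the definition of $S_n(r)$ is covered. Hence
\[
S_n(r)=X_n\setminus\bigcup_{k=2}^{n}E^{(n)}_{k,r_k}.
\]

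This identity shows that $S_n(r)$ depends only on the tuple of sets $\bigl(E^{(n)}_{k,r_k}\bigr)_{2\le k\le n}$, that is, only on the local equivalence class of $r$. So if $r$ and $s$ are locally equivalent, then $S_n(r)=S_n(s)$. The map $r\mapsto S_n(r)$ therefore descends to a surjection from the set of local equivalence classes onto $\mathcal F_n$. It follows that $N(n)=|\mathcal F_n|\le P(n)$, with the value of $P(n)$ supplied by Theorem~\ref{thm:Pn}.

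There is no substantial obstacle. The only point needing care is the indexing check that the set of moduli acting on a given $m$ is exactly the set of $k\le n$ for which $m\in\{k+1,\dots,n+1\}$, and this was handled above. The edge case $n=1$ is trivial: there are no moduli, $P(1)$ is the empty product $1$, and $N(1)=1$.
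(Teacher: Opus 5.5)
Your proof is correct and follows the paper's argument: the survivor set depends only on the tuple of elimination sets $E^{(n)}_{k,r_k}$, so it is constant on local equivalence classes, and hence $N(n)\le P(n)$. The identity $S_n(r)=X_n\setminus\bigcup_{k=2}^{n}E^{(n)}_{k,r_k}$, which the paper's proof uses only implicitly (it later reappears in the dynamic enumeration), is a welcome addition in your version.
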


\begin{proof}
Each truncated profile determines a local equivalence class, and each local equivalence class determines a unique family of forbidden sets
\[
E^{(n)}_{k,r_k} \qquad (2\le k\le n).
\]
Hence it determines a unique survivor set $S_n(r)$. Therefore the number of distinct survivor sets cannot exceed the number of local equivalence classes.
\end{proof}

\section{Asymptotic bounds for \texorpdfstring{\(N(n)\)}{N(n)}}
\label{sec:asymptotic-bounds}
\begin{theorem}[Lower bound for \(N(n)\)]\label{thm:Nn-lower-bound}
As \(n\to\infty\),
\[
N(n)\ge
\exp\!\left((\log 2+o(1))\frac{n}{\log n}\right).
\]
Equivalently,
\[
\log N(n)\ge
(\log 2+o(1))\frac{n}{\log n}.
\]
\end{theorem}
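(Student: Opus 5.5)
The plan is to exhibit about $2^{\pi(n)-\pi(n/2)}$ distinct survivor sets, built from large primes, and then use the prime number theorem. Concretely, let
\[
Q_n:=\mathbb P\cap\bigl(\tfrac{n+1}{2},\,n+1\bigr].
\]
For each subset $T\subseteq Q_n$ we construct a profile $r$ whose survivor set satisfies
\[
S_n(r)\cap Q_n=Q_n\setminus T.
\]
Distinct $T$ then give distinct survivor sets, so $N(n)\ge 2^{|Q_n|}$. By the prime number theorem,
\[
|Q_n|=\pi(n+1)-\pi\bigl(\tfrac{n+1}{2}\bigr)=(1+o(1))\frac{n}{2\log n}.
\]
This only gives the constant $\tfrac12\log 2$, so the construction must be sharpened before the claimed constant appears.

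To reach $\log 2$, we control all primes $p\in(\sqrt{n},n+1]$ independently rather than only those above $(n+1)/2$. The idea is to kill each unwanted prime with a modulus that has a unique representative. For $p\in T$ we want some modulus $k<p$ with $r_k\equiv p\pmod k$, and we must not simultaneously kill any prime of $Q\setminus T$. Residues modulo small $k$ are shared by many primes, so we use large moduli. Take $k_p=p-1$, or more generally any $k$ with $p-k$ small and $k>(n+1)/2$. Such a $k$ has at most one element of $\{k+1,\dots,n+1\}$ in each residue class, by the argument of Lemma~\ref{lem:local-count}.

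The cleanest version of the construction is as follows. For every $k>(n+1)/2$, the classes are singletons or empty. Set $r_k\equiv k+1$ if $k+1\in T$, and choose the empty class otherwise (possible since $n-k+1<k$). For $k\le (n+1)/2$, choose $r_k$ to avoid every element of $Q\setminus T$ that exceeds $k$.

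The obstacle is now visible. For $p$ prime with $p\le (n+1)/2$, the modulus $p-1$ is small and hits other elements as well. Moreover, a small modulus $k$ must avoid the residues of all surviving primes above it, which may occupy all $k$ classes.

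I would handle this by restricting $Q$ to primes in $(n/\log^2 n,\,n+1]$; primes below that threshold number only $o(n/\log n)$. Two facts are needed:
\begin{enumerate}
\item To kill $p\in T$, use the modulus $k=p-1$, which is even and composite. Its class $1$ contains $p$, together with the other elements $\equiv1\pmod{p-1}$, namely $2p-1,3p-2,\dots$. We need these other elements not to be primes of $Q\setminus T$ that must survive.
\item For each $k$, a residue avoiding the surviving set must exist. This holds as soon as $k$ exceeds the number of surviving primes above $k$ in distinct classes. For $k\le n/\log^2 n$ this can fail, but for such $k$ any class is acceptable provided it contains no element of $Q$. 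We choose $r_k\equiv0\pmod k$, which contains no primes larger than $k$.
\end{enumerate}
For the modulus $p-1$ with $p\le n/\log^2n$, which lies below the threshold, we simply do not kill.

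The remaining issue is the clash in the first fact. I expect this to be the main obstacle, and I would resolve it with a parity or choice trick. Instead of the modulus $p-1$, kill $p$ with a modulus $k\in(p/2,p)$ chosen so that $p\bmod k$ is a residue shared with no other element of $Q$ in $(k,n+1]$. Such a $k$ exists by counting: the elements $q\in Q$ with $q\equiv p\pmod k$ satisfy $k\mid q-p$. Averaging over the roughly $p/2$ choices of $k$, each $q$ with $|q-p|<n$ has at most $O(\log n)$ divisors in that range on average, so the number of bad $k$ is $o(p)$. One can also ensure that distinct $p$ use distinct $k$, by a Hall-type greedy assignment processing the primes in decreasing order. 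All remaining moduli are then set to $0$ or to an empty class, so they kill no prime of $Q$.

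This yields $2^{(1+o(1))n/\log n}$ distinct traces $S_n(r)\cap\mathbb P$, and hence the bound $\log N(n)\ge(\log 2+o(1))\,n/\log n$.
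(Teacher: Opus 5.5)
There is a genuine gap in the step that produces the private moduli, although it is repairable. Your final construction is sound in principle. Fix an injective, $T$-independent assignment $p\mapsto k_p\in(p/2,p)$ such that no element of $Q\setminus\{p\}$ larger than $k_p$ is congruent to $p$ modulo $k_p$. Then setting $r_{k_p}\equiv p$ for $p\in T$ and $r_k\equiv 0$ otherwise gives $S_n(r)\cap Q=Q\setminus T$, because residue $0$ never eliminates a prime.

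The problem is your range $Q=\mathbb P\cap(n/\log^2 n,\,n+1]$. On this range the claim that only $o(p)$ moduli $k\in(p/2,p)$ are bad is not justified, and heuristically it is false. A prime $q>p$ makes $k$ bad when $q-p=tk$. Here $t<2(n+1)/p$, which is as large as $\asymp\log^2 n$ when $p\asymp n/\log^2 n$. Summing any per-$q$ bound of this kind over the $\asymp n/\log n$ primes $q$ gives something far larger than $p$. Your own ``$O(\log n)$ on average'' yields $O(n)$ bad moduli, which is never $o(p)$. Heuristically, for $p\asymp n/\log^2 n$ the class of $p$ modulo $k$ contains $\asymp\log^2 n$ integers up to $n+1$, about $\log n$ of them prime, so essentially every $k\in(p/2,p)$ is bad. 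Averaging cannot rescue this, since you need a good modulus for every individual $p\in Q$.

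The repair is to shrink the range to $Q=\mathbb P\cap(n/J,\,n+1]$ with $J$ fixed. Then:
\begin{itemize}
\item each $q>p$ contributes at most $O(J)$ bad moduli;
\item each $q<p$ contributes at most one;
\item at most $|Q|$ moduli are already in use.
\end{itemize}
So at most $O(J\pi(n+1))=O(J^2p/\log n)=o(p)$ of the $\sim p/2$ candidates are excluded, and the greedy assignment succeeds. This gives $\log N(n)\ge(1-1/J+o(1))(\log 2)\,n/\log n$, and letting $J\to\infty$ finishes the proof. Alternatively, take $J=J(n)\to\infty$ with $J^2=o(\log n)$.

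Once repaired, your proof takes a different route from the paper's. The paper keeps the explicit moduli $k=p-1$, with profile values in $\{0,1\}$, which also eliminate all composites. It then discards the primes $p>n/J+1$ for which some $t(p-1)+1$, $2\le t\le J-1$, is prime, and this costs an upper-bound sieve for two linear forms. Your version is the mechanism of Lemma~\ref{lem:private-composite-coordinates} applied to primes. It needs only a trivial divisor count and the prime number theorem, at the price of a non-explicit choice of moduli. Since you only track the trace on $Q$, you never need to remove composites.
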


\begin{proof}
Fix an integer \(J\ge 2\).  Let \(\mathcal G_{n,J}\) be the set of primes
\(p\le n+1\) such that
\[
p>\frac{n}{J}+1
\]
and such that, for every integer \(t\) with \(2\le t\le J-1\), the integer
\(t(p-1)+1\)
is composite.

By the prime number theorem,
\[
\#\left\{p\le n+1:\ p>\frac{n}{J}+1\right\}
=
\left(1-\frac{1}{J}+o(1)\right)\frac{n}{\log n}.
\]
We must remove those primes \(p\) for which \(t(p-1)+1\) is also prime for at least one \(2\le t\le J-1\).  For each fixed \(t\), apply the upper-bound sieve of Koukoulopoulos~\cite[Theorem~18.11(b), p.~190]{Koukoulopoulos} to the two affine linear forms
\[
L_1(u)=u,
\qquad
L_2(u)=tu-(t-1);
\]
see \cite[Example~18.3, p.~183, and Theorem~18.11(b), p.~190]{Koukoulopoulos}.
For every prime \(\ell\nmid t(t-1)\), the product \(L_1(u)L_2(u)\) has two distinct roots modulo \(\ell\); the finitely many exceptional primes contribute only local factors depending on \(t\). In the notation of that theorem, take \(D=(n+1)/(\log(n+1))^5\) and
\(z=(n+1)^{1/10}\).  The polynomial remainder is
\(O(\nu(d))\), and \(\sum_{d\le D}\nu(d)\ll D\log D\);
the sifted pairs with a prime factor at most \(z\) contribute \(O(z)\).
The theorem therefore gives, for fixed \(t\), that the number of such primes is
\[
O_t\!\left(\frac{n}{(\log n)^2}\right).
\]
Since \(J\) is fixed, the total number of excluded primes is
\[
O_J\!\left(\frac{n}{(\log n)^2}\right)
=
o\!\left(\frac{n}{\log n}\right).
\]
Therefore
\[
|\mathcal G_{n,J}|
=
\left(1-\frac{1}{J}+o(1)\right)\frac{n}{\log n}.
\]

For each subset \(T\subseteq \mathcal G_{n,J}\), define a truncated profile
\(r^{(T)}=(r^{(T)}_2,\dots,r^{(T)}_n)\) by
\[
r^{(T)}_k\equiv
\begin{cases}
1 \pmod{k}, & \text{if } k=p-1 \text{ for some } p\in T,\\
0 \pmod{k}, & \text{otherwise.}
\end{cases}
\]
We claim that different subsets \(T\subseteq \mathcal G_{n,J}\) give different survivor sets.

First let \(m\in X_n\) be composite. Choose a prime divisor \(q\) of \(m\) with \(q<m\).  Since \(q\) is prime, the equality \(q=p-1\) with \(p\in\mathcal G_{n,J}\) is impossible for all sufficiently large \(n\): if \(q=2\), then \(p=3\), while if \(q\ge 3\), then \(q+1\) is an even integer greater than \(2\).  Thus \(r^{(T)}_q\equiv 0\pmod q\).  Since \(m\equiv 0\pmod q\), the integer \(m\) is eliminated.

Now let \(p\le n+1\) be prime.  If \(p\in T\), then \(p\) is eliminated by the modulus \(p-1\), because \(p\equiv 1\pmod{p-1}\).  If \(p\notin T\), then \(p\) is not eliminated by any modulus \(k<p\) with \(r^{(T)}_k\equiv 0\pmod k\), since this would require \(k\mid p\).  It also cannot be eliminated by a modulus \(q-1\) with \(q\in T\).  Indeed, such an elimination would imply \(p\equiv 1\pmod{q-1},\)
so
\(p=t(q-1)+1\)
for some integer \(t\ge 1\).  Since \(q>\frac{n}{J}+1\), we have \(q-1>n/J\).  The inequality \(p\le n+1\) therefore gives \(t<J\), hence \(t\le J-1\).  If \(t=1\), then \(p=q\), contradicting \(p\notin T\). If
\(2\le t\le J-1\), then the definition of \(\mathcal G_{n,J}\) says that \(t(q-1)+1\) is composite, again impossible. Thus \(p\) is not eliminated.

Consequently the primes in \(\mathcal G_{n,J}\) are independently switched off by membership in \(T\). Distinct subsets \(T\subseteq\mathcal G_{n,J}\) give distinct survivor sets, and hence
\[
N(n)\ge 2^{|\mathcal G_{n,J}|}.
\]
Therefore, for every fixed \(J\ge 2\),
\[
\liminf_{n\to\infty}
\frac{\log N(n)}{n/\log n}
\ge
\left(1-\frac{1}{J}\right)\log 2.
\]
Since \(J\) is arbitrary, letting \(J\to\infty\) yields
\[
\liminf_{n\to\infty}
\frac{\log N(n)}{n/\log n}
\ge \log 2,
\]
which is equivalent to the stated lower bound.
\end{proof}

\begin{lemma}[One-class large sieve bound]\label{lem:one-class-large-sieve}
Let \(Q\ge 2\), and let \(I\) be an interval containing \(x\) consecutive integers. For each prime \(p\le Q\), let \(a_p\) be a residue class modulo \(p\).  Then, uniformly in \(I\) and in the choices of the classes \(a_p\),
\[
\#\{m\in I:\ m\not\equiv a_p\pmod p
\text{ for every prime }p\le Q\}
\le
(1+o(1))\frac{x+Q^2}{\log Q},
\]
as \(Q\to\infty\).
\end{lemma}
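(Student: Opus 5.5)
This is the arithmetic large sieve in Montgomery's form, specialized to the case where exactly one class is removed modulo each prime. The plan has four steps.

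\textbf{Step 1: the sieve inequality.} Let \(\mathcal S\) be the sifted set and \(Z=|\mathcal S|\). Each prime \(p\le Q\) removes \(\omega(p)=1\) class. Montgomery's arithmetic large sieve, with the sharp constant of Montgomery--Vaughan, gives
\[
Z\le \frac{x+Q^2}{L},\qquad
L=\sum_{q\le Q}\mu^2(q)\prod_{p\mid q}\frac{\omega(p)}{p-\omega(p)}
=\sum_{q\le Q}\frac{\mu^2(q)}{\prod_{p\mid q}(p-1)} .
\]
This holds for any interval of length \(x\); one may use \(N+Q^2\) or \(N-1+Q^2\), and either is at most \(x+Q^2\). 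The bound depends only on the number of removed classes, not on which classes they are. That is exactly what makes it uniform in \(I\) and in the choices \(a_p\).

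\textbf{Step 2: identify \(L\).} For squarefree \(q\),
\[
\frac{1}{\prod_{p\mid q}(p-1)}=\frac{1}{\varphi(q)} .
\]
Hence \(L=\sum_{q\le Q}\mu^2(q)/\varphi(q)\).

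\textbf{Step 3: bound \(L\) from below.} This is the main, though routine, obstacle. I would use the classical identity
\[
\frac{\mu^2(q)}{\varphi(q)}=\frac{\mu^2(q)}{q}\sum_{d\mid q}\frac{\mu^2(d)}{\varphi(d)}\cdot\frac{\varphi(d)}{d}\cdots .
\]
It is simpler to use the known lower bound
\[
\sum_{q\le Q}\frac{\mu^2(q)}{\varphi(q)}\ge \log Q .
\]
This holds for all \(Q\ge1\) and is proved by writing
\[
\frac{\mu^2(q)}{\varphi(q)}=\frac{\mu^2(q)}{q}\prod_{p\mid q}\Bigl(1-\frac1p\Bigr)^{-1}
=\frac{\mu^2(q)}{q}\prod_{p\mid q}\sum_{j\ge0}p^{-j}.
\]
Expanding the product, every integer \(m\le Q\) arises as \(m=q\cdot(\text{something supported on primes of } q)\). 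This shows \(L\ge\sum_{m\le Q}1/m\ge\log Q\).

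Alternatively, I would cite the asymptotic \(L=\log Q+O(1)\), which yields \(L=(1+o(1))\log Q\).

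\textbf{Step 4: conclude.} Substituting into Step 1 gives
\[
Z\le\frac{x+Q^2}{\log Q},
\]
which is certainly at most \((1+o(1))\frac{x+Q^2}{\log Q}\).

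The only care needed is in Step 1, to cite the large sieve in a version valid for an arbitrary interval of \(x\) consecutive integers. Translation invariance of the sifting problem handles this: shifting \(I\) just relabels the classes \(a_p\).
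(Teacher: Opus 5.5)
Your proposal is correct and follows essentially the same route as the paper: the arithmetic large sieve with one class removed per prime $p\le Q$ (the paper invokes Kowalski's abstract form with $\Delta\le x-1+Q^2$), giving the denominator $\sum_{q\le Q}\mu^2(q)/\varphi(q)$, followed by a lower bound for that sum. The one difference is that you prove $\sum_{q\le Q}\mu^2(q)/\varphi(q)=\sum_{\operatorname{rad}(m)\le Q}1/m\ge\sum_{m\le Q}1/m\ge\log Q$ directly rather than citing $\log Q+O(1)$, which even removes the $o(1)$; you should, however, delete the garbled ``classical identity'' line at the start of Step~3, since you abandon it immediately and it is not correct as written.
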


\begin{proof}
Apply Kowalski's abstract sieve inequality \cite[Proposition~2.3, p.~12]{KowalskiLargeSieve} to the classical sieve setting
\(\mathbb Z\to\mathbb Z/p\mathbb Z\), with sieving set
\(\Omega_p=\{a_p\}\) for each prime \(p\le Q\), and take as sieve support
the squarefree integers \(q\le Q\).
Since
\[
\frac{|\Omega_p|/p}{1-|\Omega_p|/p}
=
\frac{1}{p-1},
\]
the sieve denominator is
\[
G(Q)
:=
\sum_{q\le Q}\mu^2(q)
\prod_{p\mid q}\frac{1}{p-1}
=
\sum_{q\le Q}\frac{\mu^2(q)}{\varphi(q)}.
\]
For an interval of \(x\) consecutive integers, the classical one-dimensional large-sieve inequality gives
\(\Delta\le x-1+Q^2;\)
see Kowalski~\cite[Theorem~4.1, pp.~48--49]{KowalskiLargeSieve}.
Therefore
\[
\#\{m\in I:\ m\not\equiv a_p\pmod p
\text{ for every prime }p\le Q\}
\le
\frac{x-1+Q^2}{G(Q)}.
\]
By Koukoulopoulos~\cite[Exercise~14.3(b), p.~154]{Koukoulopoulos},
\[
G(Q)
=
\sum_{q\le Q}\frac{\mu^2(q)}{\varphi(q)}
=
\log Q+O(1).
\]
Hence
\[
\frac{x-1+Q^2}{G(Q)}
\le
(1+o(1))\frac{x+Q^2}{\log Q},
\]
as \(Q\to\infty\).  The estimate is uniform in the position of \(I\)
and in the choices of the residue classes \(a_p\).
\end{proof}

\begin{theorem}[Upper bound for \(N(n)\)]\label{thm:Nn-upper-bound}
There is an absolute constant \(C>0\) such that, for all sufficiently large
\(n\),
\[
N(n)\le
\exp\!\left(C\frac{n}{\log n}\right).
\]
Equivalently,
\[
\log N(n)=O\!\left(\frac{n}{\log n}\right).
\]
\end{theorem}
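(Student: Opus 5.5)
We prove the upper bound by encoding each survivor set through a short list of data plus its part in a long interval, and we bound that interval part with Lemma~\ref{lem:one-class-large-sieve}. Let \(Q=\sqrt{n}/\log n\), and fix a threshold \(y=n/(\log n)^{2}\), say. Every survivor set \(S=S_n(r)\) splits as \(S=(S\cap[2,y])\cup(S\cap(y,n+1])\). The first piece is an arbitrary subset of a set of size \(y\), so it contributes at most \(2^{y}=\exp(o(n/\log n))\) choices. For the second piece, observe that every \(m>y\ge Q\) lies above every prime \(p\le Q\). So \(m\in S\) forces \(m\not\equiv r_p\pmod p\) for all primes \(p\le Q\). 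Hence \(S\cap(y,n+1]\) is contained in the sifted set
\[
\Sigma(r):=\{m\in(y,n+1]:\ m\not\equiv r_p\pmod p\ \text{for all primes } p\le Q\}.
\]

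The key steps, in order, are as follows.

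First, \(\Sigma(r)\) depends only on the tuple \((r_p)_{p\le Q}\). The number of such tuples is \(\prod_{p\le Q}p=e^{(1+o(1))Q}=\exp(o(n/\log n))\).

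Second, Lemma~\ref{lem:one-class-large-sieve} applies with \(x\le n\) and \(Q^2=n/(\log n)^2\). It gives
\[
|\Sigma(r)|\le(1+o(1))\frac{n}{\log Q}=(2+o(1))\frac{n}{\log n}.
\]

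Third, \(S\cap(y,n+1]\) is then determined by the tuple \((r_p)_{p\le Q}\) together with a subset of \(\Sigma(r)\). This gives at most \(\exp(o(n/\log n))\cdot 2^{(2+o(1))n/\log n}\) possibilities.

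Multiplying the counts for the two pieces yields
\[
N(n)\le\exp\!\left((2\log2+o(1))\frac{n}{\log n}\right).
\]
This proves the theorem with any \(C>2\log 2\), and in fact it gives the sharper constant announced in the introduction.

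The main obstacle is balancing the choice of \(Q\). The sieve saving is \(1/\log Q\), so we need \(\log Q\sim\tfrac12\log n\) to get the constant \(2\log 2\), and any \(Q=n^{c}\) with \(c>0\) already gives some absolute \(C\). At the same time, the number of residue tuples \(e^{\vartheta(Q)}\) and the error term \(Q^2\) must stay \(o(n/\log n)\) and \(O(n)\) respectively. The choice \(Q=\sqrt n/\log n\) meets all three constraints. The only other point to check is that the moduli \(p\le Q\) genuinely act on all candidates in \((y,n+1]\). This holds because the paper's convention is that modulus \(k\) acts on \(m>k\), and we have \(y>Q\). The uniformity of Lemma~\ref{lem:one-class-large-sieve} in the chosen classes is essential, since the tuple \((r_p)\) ranges over all possibilities.
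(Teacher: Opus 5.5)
Your proposal is correct and follows the paper's proof. You take $Q\approx n^{1/2}/\log n$, fix the prime residues up to $Q$ at cost $e^{\vartheta(Q)}=\exp(o(n/\log n))$, bound the sifted large part with Lemma~\ref{lem:one-class-large-sieve}, and pay a trivial factor for the initial segment. The differences are cosmetic: you cut at $y=n/(\log n)^2$ rather than at $Q+1$, and you track the constant to reach $2\log 2$ directly, which the paper does separately in Corollary~\ref{cor:constant-window}.
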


\begin{proof}
Put
\[
Q=\left\lfloor \frac{n^{1/2}}{\log n}\right\rfloor .
\]
First fix the prime-modulus residues
\[
r_p\pmod p
\qquad (p\le Q,\ p\text{ prime}).
\]
The number of possible choices of these residues is
\[
\prod_{p\le Q}p=\exp(\vartheta(Q)),
\]
where
\[
\vartheta(Q):=\sum_{p\le Q}\log p.
\]
By Chebyshev's estimate, \(\vartheta(Q)=O(Q)\). Hence
\[
\vartheta(Q)=o\!\left(\frac{n}{\log n}\right).
\]

Now fix one choice of the residues \(r_p\pmod p\) for primes \(p\le Q\). Any survivor \(m>Q+1\) must satisfy
\[
m\not\equiv r_p\pmod p
\qquad (p\le Q,\ p\text{ prime}),
\]
because every such prime modulus \(p\) is smaller than \(m\). Therefore the large part of any survivor set is contained in
\[
A(r;Q):=
\{m\in\{Q+2,\dots,n+1\}:\ m\not\equiv r_p\pmod p
\text{ for every prime }p\le Q\}.
\]
By Lemma~\ref{lem:one-class-large-sieve},
\[
|A(r;Q)|
\ll
\frac{n+Q^2}{\log Q}.
\]
Since \(Q^2=o(n)\) and
\[
\log Q=\left(\frac{1}{2}+o(1)\right)\log n,
\]
we have
\[
|A(r;Q)|\ll \frac{n}{\log n}.
\]
Thus, for this fixed choice of prime residues, the part of a survivor set lying
in \(\{Q+2,\dots,n+1\}\) has at most
\(2^{O(n/\log n)}\)
possibilities.

The initial part \(\{2,\dots,Q+1\}\) has size \(Q\), and therefore contributes
at most
\[
2^Q=\exp(O(Q))
=
\exp\!\left(o\!\left(\frac{n}{\log n}\right)\right)
\]
possibilities.  Hence, for each fixed choice of the prime residues, the number
of possible survivor sets is at most
\[
\exp\!\left(O\!\left(\frac{n}{\log n}\right)\right).
\]
Multiplying by the number \(\exp(\vartheta(Q))\) of choices of the prime
residues gives
\[
N(n)
\le
\exp(\vartheta(Q))
\exp\!\left(O\!\left(\frac{n}{\log n}\right)\right)
=
\exp\!\left(O\!\left(\frac{n}{\log n}\right)\right),
\]
because \(\vartheta(Q)=o(n/\log n)\).  This proves the theorem.
\end{proof}

\begin{corollary}[Correct order of growth]\label{cor:Nn-correct-order}
As \(n\to\infty\),
\[
\log N(n)\asymp \frac{n}{\log n}.
\]
More precisely,
\[
\exp\!\left((\log 2+o(1))\frac{n}{\log n}\right)
\le
N(n)
\le
\exp\!\left(O\!\left(\frac{n}{\log n}\right)\right).
\]
\end{corollary}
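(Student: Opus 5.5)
This corollary follows directly from the two preceding theorems, so I will simply combine them. The lower bound in the display is exactly Theorem~\ref{thm:Nn-lower-bound}: for every fixed \(J\ge 2\) it gives \(N(n)\ge 2^{|\mathcal G_{n,J}|}\) with \(|\mathcal G_{n,J}|=(1-1/J+o(1))\,n/\log n\). Letting \(J\to\infty\) then yields \(\log N(n)\ge(\log 2+o(1))\,n/\log n\). The upper bound in the display is Theorem~\ref{thm:Nn-upper-bound}, namely \(N(n)\le\exp(C n/\log n)\) for all large \(n\).

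To obtain \(\log N(n)\asymp n/\log n\), I will take logarithms in both bounds. For all sufficiently large \(n\) this gives
\[
\tfrac12\log 2\,\frac{n}{\log n}\le \log N(n)\le C\frac{n}{\log n}.
\]
Here the constant \(\tfrac12\log 2\) is any fixed constant below \(\log 2\), which absorbs the \(o(1)\) term. The finitely many small \(n\) do not affect an asymptotic \(\asymp\) statement, because \(N(n)\ge 2\) once \(n\ge 2\); for instance, the empty set and \(\{2\}\) are both survivor sets.

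There is no real obstacle here. The only point needing care is that the \(o(1)\) in the lower bound comes from a limiting argument over \(J\). To get an explicit positive constant, it is cleanest to fix a single \(J\), say \(J=2\). That gives \(\log N(n)\ge(\tfrac12\log 2+o(1))\,n/\log n\), which is already enough for the \(\gg\) direction.
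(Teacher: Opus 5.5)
Your proposal is correct and matches the paper, which proves the corollary by combining Theorems~\ref{thm:Nn-lower-bound} and~\ref{thm:Nn-upper-bound}; your remark that \(J=2\) already gives a positive constant without the sieve step is a nice touch. One slip in a side remark you did not need: the empty set is never a survivor set, because no modulus acts on \(m=2\) and so \(2\in S_n(r)\) always; \(N(n)\ge2\) for \(n\ge2\) still holds, since \(r_2\equiv0\) and \(r_2\equiv1\pmod 2\) give survivor sets that differ at \(3\).
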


\begin{proof}
This follows immediately from Theorems~\ref{thm:Nn-lower-bound}
and~\ref{thm:Nn-upper-bound}.
\end{proof}

\begin{corollary}[A constant window]\label{cor:constant-window}
One has
\[
\log 2
\le
\liminf_{n\to\infty}\frac{\log N(n)}{n/\log n}
\le
\limsup_{n\to\infty}\frac{\log N(n)}{n/\log n}
\le
2\log 2.
\]
In particular, if the limit
\[
C_N:=\lim_{n\to\infty}\frac{\log N(n)}{n/\log n}
\]
exists, then
\[
C_N\in[\log 2,2\log 2].
\]
\end{corollary}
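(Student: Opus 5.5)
The lower bound $\liminf\ge\log 2$ is exactly Theorem~\ref{thm:Nn-lower-bound}, so only the upper bound $\limsup\le 2\log 2$ needs proof. Theorem~\ref{thm:Nn-upper-bound} gives only an unspecified constant, so I will redo its argument and track constants: fix the prime-modulus residues $r_p$ for $p\le Q$, bound the number of such choices, and then bound the number of possible survivor sets for each fixed choice.

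I will take $Q=\lfloor n^{1/2}/\log n\rfloor$ as before. The number of residue choices is $\exp(\vartheta(Q))=\exp(O(n^{1/2}/\log n))$, which is $\exp(o(n/\log n))$. The initial segment $\{2,\dots,Q+1\}$ contributes at most $2^{Q}$ possibilities, also $\exp(o(n/\log n))$. For fixed residues, every survivor $m\ge Q+2$ lies in $A(r;Q)$. Applying Lemma~\ref{lem:one-class-large-sieve} with its explicit constant $1+o(1)$, and $x\le n$, gives
\[
|A(r;Q)|\le(1+o(1))\frac{n+Q^2}{\log Q}.
\]
Since $Q^2=o(n)$ and $\log Q=(\tfrac12+o(1))\log n$, this yields $|A(r;Q)|\le (2+o(1))\,n/\log n$. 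The large part of a survivor set is then one of at most $2^{|A(r;Q)|}$ subsets of $A(r;Q)$.

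Multiplying the three contributions gives
\[
N(n)\le \exp\!\Bigl(\vartheta(Q)+Q\log 2+(2+o(1))\log 2\,\frac{n}{\log n}\Bigr),
\]
hence $\log N(n)\le(2\log2+o(1))\,n/\log n$, which is the claimed $\limsup$ bound. The consequence for $C_N$ is then immediate.

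The main obstacle is keeping the leading constant sharp. The constant $2$ comes from $1/\log Q\sim 2/\log n$, so it is important that the large sieve bound in Lemma~\ref{lem:one-class-large-sieve} has leading constant $1+o(1)$ rather than just $O(1)$, and that $Q^2=o(n)$ so the $Q^2$ term does not inflate it. With $Q=n^{1/2}/\log n$ both hold: $Q^2=n/(\log n)^2$. One might try to improve the constant by taking $Q$ larger, but then $Q^2$ is no longer $o(n)$ and the gain is lost, so $2\log 2$ is the natural output of this method.
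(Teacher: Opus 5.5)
Your proposal is correct and follows the paper's proof essentially verbatim: the paper takes $Q=\lfloor n^{1/2}/L(n)\rfloor$ for any $L(n)\to\infty$ with $\log L(n)=o(\log n)$, and your $Q=\lfloor n^{1/2}/\log n\rfloor$ is the special case $L(n)=\log n$. As in the paper, you use the explicit $1+o(1)$ constant of Lemma~\ref{lem:one-class-large-sieve} to get $|A(r;Q)|\le(2+o(1))\,n/\log n$, and the prime residues and the initial segment contribute only $\exp(o(n/\log n))$.
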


\begin{proof}
The lower bound follows from Theorem~\ref{thm:Nn-lower-bound}.  For the upper
bound, choose
\[
Q=\left\lfloor \frac{n^{1/2}}{L(n)}\right\rfloor,
\]
where \(L(n)\to\infty\) and \(\log L(n)=o(\log n)\).  Then
\(Q^2=o(n)\) and
\[
\log Q=\left(\frac{1}{2}+o(1)\right)\log n.
\]
In the proof of Theorem~\ref{thm:Nn-upper-bound}, Lemma~\ref{lem:one-class-large-sieve}
therefore gives, for each fixed choice of prime residues modulo primes
\(p\le Q\),
\[
|A(r;Q)|\le (2+o(1))\frac{n}{\log n}.
\]
Hence the large part of a survivor set has at most
\(2^{(2+o(1))n/\log n}\)
possibilities.  The choices of the prime residues up to \(Q\), and the initial
segment \(\{2,\ldots,Q+1\}\), contribute only
\[
\exp\!\left(o\!\left(\frac{n}{\log n}\right)\right)
\]
possibilities.  Thus
\[
N(n)\le
\exp\!\left((2\log 2+o(1))\frac{n}{\log n}\right).
\]
Combining this with the lower bound gives the stated inequalities.
\end{proof}

\section{Intersections with the primes}
\label{sec:prime-traces}
Let
\[
\Pi_n:=\{p\in X_n:p\text{ is prime}\}.
\]
Define
\[
N_{\mathbb P}(n):=
\#\{S_n(r)\cap \Pi_n:r=(r_2,\ldots,r_n)\}.
\]
Thus \(N_{\mathbb P}(n)\) counts the distinct intersections with the primes of all
survivor sets in \(\mathcal F_n\).

\begin{theorem}[Asymptotic growth of intersections with the primes]\label{thm:prime-trace-growth}
As \(n\to\infty\),
\[
\log N_{\mathbb P}(n)\sim
(\log 2)\frac{n}{\log n}.
\]
\end{theorem}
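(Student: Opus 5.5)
Both bounds go through directly; the lower bound is essentially already done. The prime traces are subsets of \(\Pi_n\), so trivially
\[
N_{\mathbb P}(n)\le 2^{\pi(n+1)}=\exp\!\left((\log 2+o(1))\frac{n}{\log n}\right)
\]
by the prime number theorem. For the lower bound I will reuse the construction in the proof of Theorem~\ref{thm:Nn-lower-bound}. Fix \(J\ge2\) and take the set \(\mathcal G_{n,J}\) and the profiles \(r^{(T)}\) for \(T\subseteq\mathcal G_{n,J}\). That proof shows that the survivor set \(S_n(r^{(T)})\) contains no composite numbers. It also shows that a prime \(p\) is eliminated if and only if \(p\in T\); primes outside \(\mathcal G_{n,J}\) can be eliminated only through some modulus \(q-1\) with \(q\in T\), but that case does not matter here. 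What we need is that membership of \(p\in\mathcal G_{n,J}\) in the trace is decided exactly by whether \(p\notin T\).

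So the map \(T\mapsto S_n(r^{(T)})\cap\Pi_n\) is injective: the restriction of \(S_n(r^{(T)})\cap\Pi_n\) to \(\mathcal G_{n,J}\) equals \(\mathcal G_{n,J}\setminus T\). This gives
\[
N_{\mathbb P}(n)\ge 2^{|\mathcal G_{n,J}|}=\exp\!\left(\left(1-\tfrac1J+o(1)\right)(\log 2)\frac{n}{\log n}\right),
\]
using the count \(|\mathcal G_{n,J}|=(1-1/J+o(1))n/\log n\) established earlier, which rests on the Koukoulopoulos sieve bound for prime pairs \(p,\ t(p-1)+1\). Letting \(J\to\infty\) gives \(\liminf \log N_{\mathbb P}(n)/(n/\log n)\ge\log 2\).

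Combining this with the trivial upper bound yields \(\log N_{\mathbb P}(n)\sim(\log2)\,n/\log n\). The only point needing care is the one just noted: primes outside \(\mathcal G_{n,J}\) may be eliminated in ways that depend on \(T\). This does not affect injectivity, because injectivity is read off from coordinates in \(\mathcal G_{n,J}\) alone. All the number-theoretic input is already in Theorem~\ref{thm:Nn-lower-bound}, so I do not expect a real obstacle here.
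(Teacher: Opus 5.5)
Your proposal is correct and follows the paper's proof essentially step for step. The upper bound is the trivial \(N_{\mathbb P}(n)\le 2^{\pi(n+1)}\) with the prime number theorem, and the lower bound comes from the profiles \(r^{(T)}\) of Theorem~\ref{thm:Nn-lower-bound}, whose traces on \(\mathcal G_{n,J}\) are \(\mathcal G_{n,J}\setminus T\), followed by \(J\to\infty\). Your caveat about primes outside \(\mathcal G_{n,J}\) is unnecessary: the compositeness condition defining \(\mathcal G_{n,J}\) rules out \(p=t(q-1)+1\) for every prime \(p\le n+1\), so the trace is exactly \(\Pi_n\setminus T\). It does no harm, though, since injectivity only uses the coordinates in \(\mathcal G_{n,J}\).
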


\begin{proof}
Since every such intersection is a subset of \(\Pi_n\), we have
\(N_{\mathbb P}(n)\le 2^{|\Pi_n|}.\)
By the prime number theorem,
\[
|\Pi_n|=\pi(n+1)=\left(1+o(1)\right)\frac{n}{\log n}.
\]
Therefore
\[
N_{\mathbb P}(n)
\le
\exp\!\left((\log 2+o(1))\frac{n}{\log n}\right).
\]

For the lower bound, we use the construction from
Theorem~\ref{thm:Nn-lower-bound}.  For each fixed \(J\ge2\), that construction
produces a set \(\mathcal G_{n,J}\) of primes with
\[
|\mathcal G_{n,J}|
=
\left(1-\frac{1}{J}+o(1)\right)\frac{n}{\log n},
\]
such that each subset \(T\subseteq\mathcal G_{n,J}\) gives a survivor set whose
intersection with \(\mathcal G_{n,J}\) is \(\mathcal G_{n,J}\setminus T\).
Thus different choices of \(T\) give different intersections with the primes, and hence
\[
N_{\mathbb P}(n)\ge 2^{|\mathcal G_{n,J}|}.
\]
It follows that, for every fixed \(J\ge2\),
\[
\liminf_{n\to\infty}
\frac{\log N_{\mathbb P}(n)}{n/\log n}
\ge
\left(1-\frac{1}{J}\right)\log 2.
\]
Letting \(J\to\infty\) gives the matching lower bound
\[
\liminf_{n\to\infty}
\frac{\log N_{\mathbb P}(n)}{n/\log n}
\ge\log2.
\]
Together with the upper bound, this proves the asymptotic formula.
\end{proof}

\section{Prime-admissible sets and asymptotic comparison}
\label{sec:admissible-equivalence}

We now compare \(\mathcal F_n\) with the downward-closed family of prime-admissible sets. A set
\(A\subseteq X_n\) is called \emph{prime-admissible} if
\[
A\bmod p\ne\mathbb Z/p\mathbb Z
\qquad\text{for every prime }p.
\]
Let
\[
\mathcal A_n:=
\{A\subseteq X_n:A\text{ is prime-admissible}\},
\qquad
A_{\rm adm}(n):=|\mathcal A_n|.
\]

To compare \(\mathcal F_n\) with the prime-admissible family
\(\mathcal A_n\), introduce the downward-closed family
\(\mathcal W_n\) generated by the prime-modulus restrictions.

For residue classes
\[
\mathbf a=(a_p)_{p\le n},
\qquad a_p\in\mathbb Z/p\mathbb Z,
\]
indexed by the primes \(p\le n\), define the set determined by the
prime-modulus restrictions
\[
P_n(\mathbf a):=
\{m\in X_n:
m\not\equiv a_p\pmod p
\text{ for every prime }p<m\}.
\]
Set
\[
\mathcal W_n:=
\{A\subseteq X_n:
A\subseteq P_n(\mathbf a)
\text{ for some }\mathbf a\}.
\]

\begin{lemma}\label{lem:prime-containers}
For every \(n\ge1\),
\[
\mathcal F_n\subseteq\mathcal W_n.
\]
Moreover, every set \(P_n(\mathbf a)\) belongs to \(\mathcal F_n\).
\end{lemma}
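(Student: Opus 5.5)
Both inclusions come straight from the definitions of \(S_n(r)\), \(P_n(\mathbf a)\) and \(\mathcal W_n\). No sieve input is needed; the only work is choosing the composite-modulus residues so that they remove nothing extra.

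For \(\mathcal F_n\subseteq\mathcal W_n\), take a profile \(r=(r_2,\dots,r_n)\). Define \(\mathbf a\) by restricting \(r\) to prime moduli, namely \(a_p:=r_p\) for each prime \(p\le n\). Let \(m\in S_n(r)\). Then \(m\not\equiv r_k\pmod k\) for every \(2\le k<m\), so in particular \(m\not\equiv a_p\pmod p\) for every prime \(p<m\). Every such prime satisfies \(p<m\le n+1\), hence \(p\le n\), so \(a_p\) is defined. Thus \(m\in P_n(\mathbf a)\), giving \(S_n(r)\subseteq P_n(\mathbf a)\), and therefore \(S_n(r)\in\mathcal W_n\).

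For the second claim, fix \(\mathbf a\). I build a profile \(r\) with \(S_n(r)=P_n(\mathbf a)\) by setting \(r_p:=a_p\) at prime moduli and choosing \(r_k\) at composite moduli so that it eliminates nothing not already eliminated by the primes. The choice is \(r_k:=a_q\bmod q\) lifted to a class modulo \(k\), where \(q\) is a prime divisor of \(k\). Concretely, pick any integer \(c\) with \(c\equiv a_q\pmod q\) and put \(r_k:=c\bmod k\). Suppose \(m\equiv r_k\pmod k\) with \(k<m\). Since \(q\mid k\), reducing gives \(m\equiv c\equiv a_q\pmod q\). Also \(q\le k<m\), so \(m\) is already excluded from \(P_n(\mathbf a)\) by the prime \(q\). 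Hence composite moduli remove only elements outside \(P_n(\mathbf a)\).

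It then remains to check equality. The prime moduli in \(r\) act on \(m\) exactly when \(p<m\), which is the same condition used in \(P_n(\mathbf a)\). So an element \(m\) survives the prime moduli of \(r\) if and only if \(m\in P_n(\mathbf a)\), and by the previous step the composite moduli remove nothing further. This gives \(S_n(r)=P_n(\mathbf a)\in\mathcal F_n\). The only point needing care is the lifting step for composite \(k\), which works because \(q\mid k\) makes the class modulo \(k\) refine the class modulo \(q\). The edge cases \(n=1,2\), where there are no composite moduli, are immediate.
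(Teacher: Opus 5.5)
Your proof is correct and is essentially the paper's argument. The inclusion \(\mathcal F_n\subseteq\mathcal W_n\) comes from restricting \(r\) to its prime coordinates, and for \(P_n(\mathbf a)\in\mathcal F_n\) each composite coordinate lifts \(a_{p(k)}\) for a prime divisor \(p(k)\mid k\), so it only eliminates points already removed by \(p(k)\). You also state explicitly the facts \(p(k)\le k<m\) and \(p\le n\), which the paper leaves implicit.
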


\begin{proof}
If \(S_n(r)\in\mathcal F_n\), then retaining only the coordinates \(r_p\)
with \(p\) prime gives
\[
S_n(r)\subseteq P_n((r_p)_{p\le n}),
\]
and hence \(S_n(r)\in\mathcal W_n\).

Conversely, fix \(\mathbf a\).  For a prime modulus \(p\), put
\(r_p=a_p\).  For each composite \(k\le n\), choose a prime divisor
\(p(k)\mid k\) and choose \(r_k\pmod k\) satisfying
\[
r_k\equiv a_{p(k)}\pmod{p(k)}.
\]
If \(m\equiv r_k\pmod k\), then
\(m\equiv a_{p(k)}\pmod{p(k)}\); thus the composite coordinate eliminates
only integers already eliminated by the prime coordinate \(p(k)\).
It follows that
\[
S_n(r)=P_n(\mathbf a).
\]
\end{proof}

For later use, put
\[
K(n):=\max\{|S|:S\in\mathcal F_n\}.
\]
The argument in the proof of Theorem~\ref{thm:Nn-upper-bound}, applied to
one survivor set rather than to the whole family, gives
\[
K(n)\ll\frac{n}{\log n}.
\]

\begin{lemma}\label{lem:admissible-envelope}
As \(n\to\infty\),
\[
A_{\rm adm}(n)
\le
|\mathcal W_n|
\le
A_{\rm adm}(n)
\exp\!\left(
o\!\left(\frac{n}{\log n}\right)
\right).
\]
Equivalently,
\[
\log|\mathcal W_n|
=
\log A_{\rm adm}(n)
+
o\!\left(\frac{n}{\log n}\right).
\]
\end{lemma}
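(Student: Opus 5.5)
The lower bound will be an inclusion $\mathcal A_n\subseteq\mathcal W_n$. For the upper bound I will build an injection $A\mapsto(A',D)$ from $\mathcal W_n$ into pairs with $A'\in\mathcal A_n$ and $D$ a small subset of a short initial segment of $X_n$. The only input from earlier in the paper is the bound $K(n)\ll n/\log n$, together with Lemma~\ref{lem:prime-containers}.

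\emph{Lower bound.} Let $A\in\mathcal A_n$. For each prime $p\le n$, choose $a_p$ to be a residue class modulo $p$ that is missing from $A\bmod p$. Then no element of $A$ is $\equiv a_p\pmod p$. In particular no $m\in A$ with $m>p$ is, so $A\subseteq P_n(\mathbf a)$ and $A\in\mathcal W_n$. Hence $A_{\rm adm}(n)\le|\mathcal W_n|$.

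\emph{Normalising the container.} Let $A\in\mathcal W_n$, say $A\subseteq P_n(\mathbf a)$. By Lemma~\ref{lem:prime-containers}, $P_n(\mathbf a)\in\mathcal F_n$, so $|A|\le K(n)$. For every prime $p>K(n)$ we have $|A|<p$, so some class $b_p\bmod p$ is missing from $A$. Replace $a_p$ by $b_p$ for all such $p$. Since $b_p$ is missing from all of $A$, we still have $A\subseteq P_n(\mathbf a)$ after the change. So without loss of generality, for every prime $p>K(n)$ the class $a_p$ meets no element of $A$.

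\emph{The defect set.} Define
\[
D:=\{m\in A:\ m\le p\text{ and } m\equiv a_p\pmod p\text{ for some prime }p\le n\},
\qquad A':=A\setminus D .
\]
Take any prime $p\le n$ and any $m\in A'$ with $m\equiv a_p\pmod p$. If $m>p$, this contradicts $A\subseteq P_n(\mathbf a)$. If $m\le p$, then $m\in D$, contradicting $m\in A'$. Hence $A'$ avoids $a_p$ for every prime $p\le n$. For primes $p>n$ admissibility is automatic, since $|X_n|=n<p$. So $A'\in\mathcal A_n$.

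By the normalisation, only primes $p\le K(n)$ contribute to $D$, so $D\subseteq\{2,\dots,K(n)\}$. For a fixed $p$, the integers $m\in\{2,\dots,p\}$ are pairwise incongruent modulo $p$, so each prime contributes at most one element. Therefore
\[
|D|\le \pi(K(n))\ll \frac{n}{(\log n)^2}.
\]

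\emph{Counting.} The map $A\mapsto(A',D)$ is injective, because $A=A'\sqcup D$. The number of possible $D$ is at most
\[
\sum_{j\le \pi(K)}\binom{K}{j}\le \exp\!\Bigl(O\bigl(\pi(K)\log(eK/\pi(K))\bigr)\Bigr)=\exp\!\Bigl(O\Bigl(\frac{n\log\log n}{(\log n)^2}\Bigr)\Bigr)=\exp\!\Bigl(o\Bigl(\frac{n}{\log n}\Bigr)\Bigr),
\]
using $K=K(n)\ll n/\log n$ and $K/\pi(K)\ll\log K$. This gives $|\mathcal W_n|\le A_{\rm adm}(n)\exp(o(n/\log n))$.

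The main obstacle is controlling $D$: a priori the unrestricted elements $m\le p$ range over primes $p$ up to $n$, and a naive count of such sets is far too large. The normalisation step is what overcomes this, since replacing $a_p$ for $p>K(n)$ by a class missing from $A$ confines $D$ to $[2,K(n)]$ with at most $\pi(K(n))$ elements.
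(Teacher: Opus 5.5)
Your proof is correct and follows essentially the paper's argument: delete from $A$ at most one element $m\le p$ for each prime $p$ up to the size bound ($|A|\le K(n)$), observe that what remains is prime-admissible, and absorb the deleted set into a factor $\exp(O(n\log\log n/(\log n)^2))$ using $\pi(K(n))\ll n/(\log n)^2$. Your normalisation of $a_p$ for $p>K(n)$ plays the role of the paper's remark that $|B|\le|A|<p$ for larger primes, and your count over subsets of $\{2,\dots,K(n)\}$ is a slightly sharper form of the paper's $\sum_{j\le\pi(K(n))}\binom{n}{j}$.
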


\begin{proof}
Every prime-admissible set belongs to \(\mathcal W_n\): for each prime \(p\),
choose a residue class \(a_p\pmod p\) omitted by the set. Hence
\(A_{\rm adm}(n)\le|\mathcal W_n|.\)

For the reverse comparison, let \(A\in\mathcal W_n\), and choose
\(\mathbf a\) such that \(A\subseteq P_n(\mathbf a)\). By
Lemma~\ref{lem:prime-containers}, \(P_n(\mathbf a)\in\mathcal F_n\), and
hence
\(|A|\le |P_n(\mathbf a)|\le K(n).\)
For every prime \(p\le |A|\), the class \(a_p\pmod p\) contains no element
of \(A\) greater than \(p\).  Among the elements of
\(A\cap\{2,\ldots,p\}\), that class contains at most one element.  Delete
this possible element for each prime \(p\le |A|\), and call the remaining
set \(B\).  Then
\[
|A\setminus B|\le\pi(|A|)\le\pi(K(n)).
\]
The set \(B\) omits a residue class modulo every prime \(p\le |A|\); for
\(p>|A|\), it cannot occupy all \(p\) residue classes because
\(|B|\le|A|<p\).  Thus \(B\in\mathcal A_n\).

Consequently,
\[
|\mathcal W_n|
\le
A_{\rm adm}(n)
\sum_{j\le\pi(K(n))}\binom{n}{j}.
\]
Since \(K(n)\ll n/\log n\),
\[
\pi(K(n))=O\!\left(\frac{n}{(\log n)^2}\right),
\]
and the standard bound for partial binomial sums gives
\[
\log\sum_{j\le\pi(K(n))}\binom{n}{j}
=
O\!\left(\frac{n\log\log n}{(\log n)^2}\right)
=
o\!\left(\frac{n}{\log n}\right).
\]
This proves the lemma.
\end{proof}

The following lemma shows that composite coordinates can delete prescribed
large elements of a prime container with only a subexponential loss on the
initial interval.

\begin{lemma}[Deleting prescribed elements with composite moduli]
\label{lem:private-composite-coordinates}
Fix \(0<\eta<1/2\), and put
\[
J=(\log n)^{1/2-\eta},
\qquad
Y=\left\lfloor\frac{n}{J}\right\rfloor.
\]
For all sufficiently large \(n\), let \(P\in\mathcal F_n\), and let
\[
B\subseteq P\cap\{Y+2,\ldots,n+1\}.
\]
Then there is an injective assignment
\[
b\longmapsto k_b
\qquad (b\in B)
\]
such that, for every \(b\in B\),
\[
\left\lceil\frac b2\right\rceil\le k_b<b,
\qquad
k_b\ \text{is composite},
\]
and
\[
c\not\equiv b\pmod{k_b}
\qquad
\text{for every }c\in P\setminus\{b\}.
\]
Consequently, suppose that \(r^{(0)}\) is a profile with
\(S_n(r^{(0)})=P\) whose composite coordinates satisfy the redundancy
property obtained from the construction in the proof of
Lemma~\ref{lem:prime-containers}: for every composite \(k\le n\), there
is a prime divisor \(p(k)\mid k\) such that
\[
r^{(0)}_k\equiv r^{(0)}_{p(k)}\pmod{p(k)}.
\]
If, for each \(b\in B\), the coordinate \(r^{(0)}_{k_b}\) is replaced by
\(b\pmod{k_b}\), while all other coordinates are left unchanged, then
the resulting profile \(r\) satisfies
\[
S_n(r)=P\setminus B.
\]
\end{lemma}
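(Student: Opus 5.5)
The plan is to prove the two parts separately: first a counting argument that builds the injective assignment greedily, then a direct check of the modified profile. For the first part, fix \(b\in B\). Then \(b\ge Y+2>n/J\). Call a candidate \(k\in[\lceil b/2\rceil,b)\) \emph{bad for \(b\)} if some \(c\in P\setminus\{b\}\) satisfies \(c\equiv b\pmod k\).

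\begin{itemize}
\item Since \(c,b\in X_n\), such a \(c\) has the form \(c=b+jk\) with \(j\neq0\) and \(|j|k\le n\). Because \(k\ge b/2>n/(2J)\), this forces \(1\le|j|\le 2J+1\).
\item For fixed \(j\) and fixed \(c\), there is at most one such \(k\).
\item Hence the number of bad \(k\) for \(b\) is at most \(2(2J+1)|P|\le 2(2J+1)K(n)\ll J\,n/\log n\), using \(K(n)\ll n/\log n\).
\end{itemize}

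On the other side, the interval \([\lceil b/2\rceil,b)\) contains at least \(b/2-\pi(b)-1\ge(\tfrac12-o(1))\,n/J\) composite integers. These are all \(\le n\), since \(b\le n+1\).

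Next, process the elements of \(B\) one at a time, each time choosing a composite \(k_b\) that is not bad for \(b\) and has not been used before. The number of available choices at each step is at least
\[
\Bigl(\tfrac12-o(1)\Bigr)\frac{n}{J}-O\!\left(\frac{Jn}{\log n}\right)-|B|,
\]
and \(|B|\le K(n)\ll n/\log n\). Since \(J^2=(\log n)^{1-2\eta}=o(\log n)\), both subtracted terms are \(o(n/J)\). So this quantity is positive for large \(n\), uniformly in \(P\), \(B\) and \(b\), and the greedy choice always succeeds. This comparison of \(n/J\) against \(J n/\log n\) is the main quantitative point; it is exactly what dictates the exponent \(1/2-\eta\) in \(J\). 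Apart from that, the counting is routine.

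For the second part, I verify \(S_n(r)=P\setminus B\) by cases on \(m\in X_n\).

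\begin{itemize}
\item \emph{\(m\notin P\).} Then \(m\) is eliminated under \(r^{(0)}\) by some \(k<m\). If \(k\) is composite, the redundancy property gives \(m\equiv r^{(0)}_{p(k)}\pmod{p(k)}\) with \(p(k)\le k<m\). So in every case \(m\) is eliminated by a prime coordinate. The prime coordinates are unchanged, because every \(k_b\) is composite. Hence \(m\) is still eliminated under \(r\).
\item \emph{\(m=b\in B\).} Here \(k_b<b\) and \(b\equiv r_{k_b}\pmod{k_b}\), so \(b\) is eliminated.
\item \emph{\(m\in P\setminus B\).} The unchanged coordinates did not eliminate \(m\) under \(r^{(0)}\), so they do not eliminate it now. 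A changed coordinate \(k_b\) would require \(m\equiv b\pmod{k_b}\) with \(m\in P\setminus\{b\}\), which is excluded by the choice of \(k_b\). So \(m\) survives.
\end{itemize}

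Injectivity of \(b\mapsto k_b\) ensures that no coordinate receives two conflicting assignments. This completes the plan.
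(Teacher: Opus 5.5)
Your proposal is correct and follows essentially the same route as the paper. The paper likewise bounds the collision candidates in $[\lceil b/2\rceil,b)$ by $\ll J|P|$, compares them, the primes, and the previously used moduli against the $\asymp b$ available integers via $J^2=o(\log n)$, picks $k_b$ greedily, and then runs the same three-case check of $S_n(r)=P\setminus B$. Your version also spells out, via the redundancy property, why points outside $P$ stay eliminated by unchanged prime coordinates, which the paper only asserts.
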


\begin{proof}
For \(b\in B\), let
\[
I_b:=
\left[\left\lceil\frac b2\right\rceil,b\right)\cap\mathbb Z.
\]
Every \(k\in I_b\) satisfies \(k<b\le n+1\), and hence \(k\le n\);
thus every candidate is a valid profile coordinate.  Call
\(k\in I_b\) a collision candidate if
\(c\equiv b\pmod k\)
for some \(c\in P\setminus\{b\}\).  We first bound the number of such
candidates uniformly in \(b\).

Suppose first that \(c<b\).  Then \(k\mid b-c\), and
\[
0<b-c<b,
\qquad
k\ge\frac b2.
\]
Writing \(b-c=qk\), we obtain \(1\le q<2\), and hence \(q=1\).
Thus \(k=b-c\), so each \(c<b\) contributes at most one collision
candidate.

Suppose next that \(c>b\).  Then
\(c-b=tk\)
for some positive integer \(t\).  Since \(c\le n+1\) and \(k\ge b/2\),
\[
t\le\frac{n+1-b}{k}
<
\frac{2(n+1)}{b}.
\]
Because \(b\ge Y+2\) and \(J>1\) for all sufficiently large \(n\), we
have \(b>n/J+1\), and therefore
\[
\frac{n+1}{b}<J.
\]
It follows that \(t<2J\).  For fixed \(c\) and \(t\), the equation
\(c-b=tk\) determines at most one integer \(k\).  Hence
\[
\#\{k\in I_b:k\text{ is a collision candidate}\}
\ll J|P|.
\]

Since \(P\in\mathcal F_n\), the definition of \(K(n)\) and the bound
\(K(n)\ll n/\log n\) give
\[
|P|\le K(n)\ll\frac{n}{\log n}.
\]
Moreover, \(b>n/J\), and consequently
\[
\frac{J|P|}{b}
\ll
\frac{J^2}{\log n}
=
\frac{1}{(\log n)^{2\eta}}
=o(1).
\]
Thus the collision candidates form an \(o(b)\)-subset of \(I_b\),
uniformly for \(b\in B\).

We must also exclude prime coordinates.  Since
\[
b>\frac{n}{J}
\]
and \(\log J=o(\log n)\), we have \(\log b\sim\log n\), uniformly for
\(b\in B\).  The prime number theorem therefore gives
\[
\#\{k\in I_b:k\text{ is prime}\}
\le\pi(b)
\ll\frac{b}{\log b}
=o(b).
\]
Finally, during a greedy construction, the number of coordinates
already assigned is at most \(|B|\le|P|\), and
\[
\frac{|P|}{b}
\ll
\frac{J}{\log n}
=
\frac{1}{(\log n)^{1/2+\eta}}
=o(1).
\]
On the other hand,
\[
|I_b|
=
b-\left\lceil\frac b2\right\rceil
=
\left\lfloor\frac b2\right\rfloor
=
\frac b2+O(1).
\]
Hence, after removing the collision candidates, the prime candidates,
and all previously assigned coordinates, the set \(I_b\) still contains
\[
\left(\frac12+o(1)\right)b
\]
available integers, uniformly for \(b\in B\).  Since
\(b>n/J\to\infty\), every nonprime candidate in \(I_b\) is composite
for all sufficiently large \(n\).  We may therefore choose the
coordinates \(k_b\) greedily so that they are distinct, composite, and
satisfy the required noncollision condition.

For the modified profile, each \(b\in B\) is eliminated by its assigned
coordinate \(k_b\).  If \(c\in P\setminus B\), a modified coordinate
\(k_b\) is inactive when \(c\le k_b\), while for \(c>k_b\) the
noncollision condition gives
\(c\not\equiv b\pmod{k_b}.\)
Hence no modified coordinate eliminates \(c\).  Every integer outside
\(P\) remains eliminated by an unchanged prime coordinate, so the new
survivor set is \(P\setminus B\).
\end{proof}

\medskip
\noindent
The combinatorial part of this lemma is formalized in the supplementary
Lean 4 file
\path{private_composite_coordinates_of_oneClassLargeSieve.lean}.
The one-class large-sieve estimate is assumed there as
\texttt{OneClassLargeSieveBound}; compilation details are given in
\texttt{readme\_Lean\_proof.md}.
\medskip

\begin{lemma}\label{lem:survivor-envelope}
As \(n\to\infty\),
\[
N(n)
\le
|\mathcal W_n|
\le
N(n)
\exp\!\left(
o\!\left(\frac{n}{\log n}\right)
\right).
\]
Equivalently,
\[
\log|\mathcal W_n|
=
\log N(n)
+
o\!\left(\frac{n}{\log n}\right).
\]
\end{lemma}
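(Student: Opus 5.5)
The lower inequality \(N(n)\le|\mathcal W_n|\) is immediate from Lemma~\ref{lem:prime-containers}, since \(\mathcal F_n\subseteq\mathcal W_n\). For the upper inequality I will map each \(A\in\mathcal W_n\) to a survivor set \(F(A)\in\mathcal F_n\) so that every fibre of \(A\mapsto F(A)\) has size \(\exp(o(n/\log n))\).

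Fix \(0<\eta<1/2\) and let \(J\) and \(Y\) be as in Lemma~\ref{lem:private-composite-coordinates}. Given \(A\in\mathcal W_n\), choose (canonically, say lexicographically least) \(\mathbf a\) with \(A\subseteq P:=P_n(\mathbf a)\). By Lemma~\ref{lem:prime-containers}, \(P\in\mathcal F_n\), and the profile \(r^{(0)}\) constructed there, with \(r^{(0)}_k\equiv a_{p(k)}\pmod{p(k)}\) for composite \(k\), realizes \(P\) and has the redundancy property. Put
\[
B:=(P\setminus A)\cap\{Y+2,\ldots,n+1\}.
\]
Lemma~\ref{lem:private-composite-coordinates} then gives a profile \(r\) with \(S_n(r)=P\setminus B\). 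Define
\[
F(A):=P\setminus B=A\cup\bigl((P\setminus A)\cap\{2,\ldots,Y+1\}\bigr)\in\mathcal F_n.
\]
Consequently \(F(A)\) and \(A\) agree on \(\{Y+2,\ldots,n+1\}\), and \(A\cap\{2,\ldots,Y+1\}\subseteq F(A)\cap\{2,\ldots,Y+1\}\).

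Next comes the fibre count. Given \(S\in\mathcal F_n\), any \(A\) with \(F(A)=S\) is determined by its intersection with the initial segment \(\{2,\ldots,Y+1\}\), and that intersection is a subset of \(S\cap\{2,\ldots,Y+1\}\). Since \(|S|\le K(n)\), the fibre has size at most \(2^{\min(Y,K(n))}\). This is the main obstacle: the bound \(2^Y\) with \(Y=n/(\log n)^{1/2-\eta}\) is far too large, and \(2^{K(n)}\) is of the same order as the main term. I need the better estimate
\[
|S\cap\{2,\ldots,Y+1\}|=o(n/\log n).
\]
I would get it from the large-sieve argument behind \(K(n)\ll n/\log n\) applied to the short interval. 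The prime residues \(r_p\) with \(p\le Q\) and \(Q=\lfloor Y^{1/2}/L\rfloor\) sieve the interval \(\{Q+2,\ldots,Y+1\}\), so Lemma~\ref{lem:one-class-large-sieve} gives at most \(O(Y/\log Y)+Q\) survivors there. Since \(\log Y\sim\log n\), this is \(O\bigl(n/(J\log n)\bigr)=o(n/\log n)\). Each fibre therefore has size \(\exp(o(n/\log n))\), and summing over \(S\in\mathcal F_n\) gives
\[
|\mathcal W_n|\le N(n)\exp\bigl(o(n/\log n)\bigr).
\]

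It remains to check the hypotheses of Lemma~\ref{lem:private-composite-coordinates}. We have \(P\in\mathcal F_n\), the set \(B\) lies in \(P\cap\{Y+2,\ldots,n+1\}\), and the redundancy property holds by construction. The fact that \(F(A)\) may contain extra small elements of \(P\) is harmless, because the fibre count only needs \(A\) to be recoverable from \(F(A)\) up to choosing a subset of the small part of \(F(A)\). Taking logarithms gives the equivalent form
\[
\log|\mathcal W_n|=\log N(n)+o\!\left(\frac{n}{\log n}\right).
\]
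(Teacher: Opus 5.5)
Your proposal is correct and follows the paper's proof essentially step for step. You use the same map $A\mapsto P\setminus B$ obtained from Lemma~\ref{lem:private-composite-coordinates}, the same fibre bound $2^{|S\cap X_Y|}$, and the same large-sieve control of the initial segment. The only cosmetic difference is that the paper gets $|S\cap X_Y|\ll Y/\log Y$ by noting $S\cap X_Y\in\mathcal F_Y$ and invoking $K(Y)\ll Y/\log Y$, whereas you rerun that large-sieve argument directly on $\{Q+2,\ldots,Y+1\}$, which amounts to the same thing.
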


\begin{proof}
The inclusion \(\mathcal F_n\subseteq\mathcal W_n\) gives
\(N(n)\le|\mathcal W_n|\).  We prove the reverse inequality up to a
subexponential factor.

Fix \(0<\eta<1/2\), and put
\[
J=(\log n)^{1/2-\eta},
\qquad
Y=\left\lfloor\frac{n}{J}\right\rfloor.
\]
Let \(A\in\mathcal W_n\), and choose \(\mathbf a\) such that
\(A\subseteq P:=P_n(\mathbf a)\).  By Lemma~\ref{lem:prime-containers},
there is a profile \(r^{(0)}\) with
\(S_n(r^{(0)})=P\)
whose composite coordinates are redundant lifts of prime coordinates.

Set
\[
B:=
(P\setminus A)\cap\{Y+2,\ldots,n+1\}.
\]
Apply Lemma~\ref{lem:private-composite-coordinates} to \(P\) and \(B\),
using the profile \(r^{(0)}\) constructed above.  Let \(r\) be the
resulting profile, and put
\(S_A:=S_n(r).\)
The conclusion of that lemma gives
\(S_A=P\setminus B.\)
Since \(B\subseteq P\setminus A\), we have
\(A\subseteq S_A.\)
Moreover, by the definition of \(B\),
\[
S_A\cap\{Y+2,\ldots,n+1\}
=
A\cap\{Y+2,\ldots,n+1\}.
\]

Choose one such \(S_A\) for every \(A\in\mathcal W_n\).  For a fixed
\(S\in\mathcal F_n\), every \(A\) mapped to \(S\) agrees with \(S\) above
\(Y+1\) and is a subset of \(S\) on \(X_Y=\{2,\ldots,Y+1\}\).  Hence the
fiber over \(S\) has size at most
\[
2^{|S\cap X_Y|}.
\]
The restriction \(S\cap X_Y\) belongs to \(\mathcal F_Y\), because the
survival of every \(m\le Y+1\) depends only on the coordinates \(r_k\)
with \(k<m\le Y+1\).  Hence the individual-set bound used above gives
\[
|S\cap X_Y|\le K(Y)\ll\frac{Y}{\log Y}.
\]
Therefore
\[
|\mathcal W_n|
\le
N(n)\exp\!\left(O\!\left(\frac{Y}{\log Y}\right)\right)
=
N(n)\exp\!\left(o\!\left(\frac{n}{\log n}\right)\right).
\]
This proves the lemma.
\end{proof}

\begin{proposition}[Comparison through \(\mathcal W_n\)]
\label{prop:two-envelope-comparisons}
As \(n\to\infty\),
\[
A_{\rm adm}(n)
\le |\mathcal W_n|
\le
A_{\rm adm}(n)
\exp\!\left(
o\!\left(\frac{n}{\log n}\right)
\right),
\]
and
\[
N(n)
\le |\mathcal W_n|
\le
N(n)
\exp\!\left(
o\!\left(\frac{n}{\log n}\right)
\right).
\]
\end{proposition}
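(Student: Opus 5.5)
This proposition is a restatement of the two envelope comparisons already established, so the plan is simply to assemble them. The first chain of inequalities,
\[
A_{\rm adm}(n)\le|\mathcal W_n|\le A_{\rm adm}(n)\exp\!\left(o\!\left(\frac{n}{\log n}\right)\right),
\]
is exactly Lemma~\ref{lem:admissible-envelope}. I would recall its two ingredients. The lower inequality holds because every prime-admissible set lies in some container $P_n(\mathbf a)$, obtained by taking each $a_p$ to be an omitted class. The upper inequality comes from deleting at most $\pi(K(n))=O(n/(\log n)^2)$ elements from any $A\in\mathcal W_n$ to reach a prime-admissible set. The bound $K(n)\ll n/\log n$ used there comes from Lemma~\ref{lem:one-class-large-sieve}.

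The second chain,
\[
N(n)\le|\mathcal W_n|\le N(n)\exp\!\left(o\!\left(\frac{n}{\log n}\right)\right),
\]
is exactly Lemma~\ref{lem:survivor-envelope}. Its lower half is the inclusion $\mathcal F_n\subseteq\mathcal W_n$ from Lemma~\ref{lem:prime-containers}. Its upper half constructs a map $A\mapsto S_A\in\mathcal F_n$, using Lemma~\ref{lem:private-composite-coordinates} to delete the large elements of $P\setminus A$ above $Y+1$ with private composite moduli. The fibres of this map then have size at most $2^{K(Y)}=\exp(o(n/\log n))$, since $Y=o(n)$.

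The proof therefore consists of quoting these two lemmas side by side. There is no real obstacle at this stage: all the analytic content (the large sieve bound on $K(n)$ and the greedy composite assignment) has already been established. The only point I would check is that both error terms are uniform $o(n/\log n)$ quantities as $n\to\infty$, so that they can later be combined. Combining them (taking logarithms and subtracting) yields the main comparison $\log N(n)=\log A_{\rm adm}(n)+o(n/\log n)$, which I expect to be stated as the next corollary.
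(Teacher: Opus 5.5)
Your proposal is correct and matches the paper's proof. The paper likewise obtains the first chain directly from Lemma~\ref{lem:admissible-envelope}, and the second from Lemma~\ref{lem:survivor-envelope} combined with the inclusion $\mathcal F_n\subseteq\mathcal W_n$ of Lemma~\ref{lem:prime-containers}.
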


\begin{proof}
The first pair of inequalities is exactly the content of
Lemma~\ref{lem:admissible-envelope} together with the inclusion
\(A_{\rm adm}(n)\le |\mathcal W_n|\) proved there.  The second pair follows
from Lemma~\ref{lem:survivor-envelope} and the inclusion
\(\mathcal F_n\subseteq\mathcal W_n\) of
Lemma~\ref{lem:prime-containers}.
\end{proof}

\begin{theorem}[Asymptotic comparison with prime-admissible sets]\label{thm:admissible-equivalence}
As \(n\to\infty\),
\[
\log N(n)
=
\log A_{\rm adm}(n)
+
o\!\left(\frac{n}{\log n}\right).
\]
\end{theorem}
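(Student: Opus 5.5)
The plan is to pass through the auxiliary family \(\mathcal W_n\), using Proposition~\ref{prop:two-envelope-comparisons}, which already bounds both \(N(n)\) and \(A_{\rm adm}(n)\) by \(|\mathcal W_n|\) up to factors of size \(\exp(o(n/\log n))\).

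\textbf{Step 1.} Take logarithms in the second pair of inequalities of Proposition~\ref{prop:two-envelope-comparisons}. This gives
\[
\log N(n)\le\log|\mathcal W_n|\le\log N(n)+o\!\left(\frac{n}{\log n}\right),
\]
that is, \(\log|\mathcal W_n|=\log N(n)+o(n/\log n)\).

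\textbf{Step 2.} Take logarithms in the first pair of inequalities in the same way. This gives \(\log|\mathcal W_n|=\log A_{\rm adm}(n)+o(n/\log n)\).

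\textbf{Step 3.} Subtract the two relations. The difference of two \(o(n/\log n)\) error terms is again \(o(n/\log n)\), so
\[
\log N(n)-\log A_{\rm adm}(n)=o\!\left(\frac{n}{\log n}\right),
\]
which is the claimed comparison.

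All logarithms involved are well defined, since \(N(n)\ge1\) and \(A_{\rm adm}(n)\ge1\) (the empty set is prime-admissible). The substantive work is already contained in Lemmas~\ref{lem:admissible-envelope} and~\ref{lem:survivor-envelope}, so there is no real obstacle here. The one point to check is that the implicit \(o(\cdot)\) terms in those lemmas are genuine functions of \(n\) alone. In Lemma~\ref{lem:survivor-envelope} this holds because \(\eta\) is fixed once and for all, so the error there is \(O(Y/\log Y)=O(n/(J\log n))\) with \(J=(\log n)^{1/2-\eta}\to\infty\). In Lemma~\ref{lem:admissible-envelope} the error is \(O(n\log\log n/(\log n)^2)\), which is also \(o(n/\log n)\).

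As an optional consistency check, combining the result with Corollary~\ref{cor:constant-window} would transfer the constant window \([\log 2,2\log 2]\) to \(\log A_{\rm adm}(n)/(n/\log n)\). This is consistent with the base-2 and base-4 block-complexity bounds of Kasjan, Lema\'nczyk, and Zuniga Alterman, but it is not needed for the proof.
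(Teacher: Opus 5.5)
Your proposal is correct and matches the paper's own proof: both take logarithms in the two pairs of inequalities of Proposition~\ref{prop:two-envelope-comparisons}, compare \(\log N(n)\) and \(\log A_{\rm adm}(n)\) separately with \(\log|\mathcal W_n|\), and subtract. Your added checks, that the logarithms are defined and that the error terms depend only on \(n\), are accurate.
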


\begin{proof}
By Proposition~\ref{prop:two-envelope-comparisons},
\[
\log A_{\rm adm}(n)
\le
\log |\mathcal W_n|
\le
\log A_{\rm adm}(n)
+
o\!\left(\frac{n}{\log n}\right),
\]
while also
\[
\log N(n)
\le
\log |\mathcal W_n|
\le
\log N(n)
+
o\!\left(\frac{n}{\log n}\right).
\]
Hence both \(\log A_{\rm adm}(n)\) and \(\log N(n)\) differ from
\(\log|\mathcal W_n|\) by
\(o(n/\log n)\), and therefore
\[
\log N(n)
=
\log A_{\rm adm}(n)
+
o\!\left(\frac{n}{\log n}\right).
\]
\end{proof}

Let \(X_{\mathbb P}\) denote the full \(\mathbb P\)-admissible subshift,
whose points have support omitting a residue class modulo every prime.
Zero-extending a finite admissible support gives a point of
\(X_{\mathbb P}\), and restricting a point gives the converse.  After
translation from \(X_n\) to an interval of length \(n\), this shows that
\[
A_{\rm adm}(n)=\operatorname{cpx}_{X_{\mathbb P}}(n),
\]
where \(\operatorname{cpx}_{X_{\mathbb P}}(n)\) denotes block complexity.

Kasjan, Lema\'nczyk, and Zuniga Alterman
\cite[Theorem~1.1]{KasjanLemanczykZuniga} proved
\[
(2+o(1))^{n/\log n}
\ll
\operatorname{cpx}_{X_{\mathbb P}}(n)
\ll
(4+o(1))^{n/\log n}.
\]
Thus Theorem~\ref{thm:admissible-equivalence} identifies the constant
window in Corollary~\ref{cor:constant-window} with the corresponding
block-complexity problem for the \(\mathbb P\)-admissible subshift.

\begin{remark}[A barrier from narrow admissible tuples]\label{rem:Hk-barrier}
Let \(H(k)\) denote the minimum diameter of an admissible \(k\)-tuple.
The bounds proved in the cited 2014 work are
\[
\left(\frac12+o(1)\right)k\log k
\le H(k)\le
(1+o(1))k\log k;
\]
see \cite[Theorem~17 and the paragraph immediately following it]{DHJPolymath}. If one could prove
\[
\log A_{\rm adm}(n)
\le
(\log2+o(1))\frac{n}{\log n},
\]
then every admissible \(k\)-tuple of diameter \(H(k)\), together with all
of its \(2^k\) subsets, would imply
\[
H(k)\ge(1-o(1))k\log k.
\]
Hence the conjectural lower endpoint \(\log2\) for \(N(n)\) would improve the lower bound to the leading constant in the cited upper bound for \(H(k)\).
\end{remark}

\section{The structural recurrence and extendibility}
\label{sec:structural-recurrence}
\subsection{The transition from \texorpdfstring{\(n\) to \(n+1\)}{n to n+1}}

The key to the transition from $n$ to $n+1$ is the notion of extendibility.

\begin{definition}\label{def:extendible}
An element $A\in\mathcal F_n$ is called \emph{extendible} if there exists a truncated profile
\[
r=(r_2,\dots,r_n)
\]
such that
\[
S_n(r)=A
\]
and, in addition,
\[
r_k\not\equiv n+2 \pmod{k}
\qquad (2\le k\le n).
\]
Equivalently, $A$ is extendible if and only if $A\cup\{n+2\}\in\mathcal F_{n+1}$.
We write $\mathcal E_n\subseteq\mathcal F_n$ for the family of extendible sets and
\[
E(n):=|\mathcal E_n|.
\]
\end{definition}

\begin{theorem}\label{thm:structural-recurrence}
For every integer $n\ge 1$,
\[
\mathcal F_{n+1}
=
\mathcal F_n\sqcup\{A\cup\{n+2\}: A\in\mathcal E_n\}.
\]
Consequently,
\[
N(n+1)=N(n)+E(n).
\]
\end{theorem}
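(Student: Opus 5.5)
The plan is to compare profiles of length $n+1$ with profiles of length $n$ via restriction. A profile $r'=(r_2,\dots,r_{n+1})$ for $n+1$ has one more coordinate than a profile for $n$, namely $r_{n+1}$. That coordinate acts only on candidates $m>n+1$ in $X_{n+1}$, i.e. only on $m=n+2$. For every $m\le n+1$ the survival condition in $S_{n+1}(r')$ involves only the coordinates $r_k$ with $k<m\le n+1$. Hence
\[
S_{n+1}(r')\cap X_n=S_n(r),\qquad r:=(r_2,\dots,r_n).
\]
Moreover, $n+2\in S_{n+1}(r')$ holds exactly when $r_k\not\equiv n+2\pmod k$ for all $2\le k\le n+1$. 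For $k=n+1$ this reads $r_{n+1}\not\equiv 1\pmod{n+1}$.

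\emph{Step 1 (sets without $n+2$).} First show that $\{S\in\mathcal F_{n+1}: n+2\notin S\}=\mathcal F_n$. The inclusion $\subseteq$ is immediate from the restriction identity. For $\supseteq$, take $A=S_n(r)$ and extend $r$ by $r_{n+1}\equiv 1\pmod{n+1}$. This coordinate eliminates $n+2$ and affects nothing else, so $S_{n+1}(r')=A$. This needs $n+1\ge 2$, which holds for every $n\ge1$.

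\emph{Step 2 (sets containing $n+2$).} Next show that $\{S\in\mathcal F_{n+1}: n+2\in S\}=\{A\cup\{n+2\}:A\in\mathcal E_n\}$. If $S=S_{n+1}(r')$ contains $n+2$, then $r=(r_2,\dots,r_n)$ satisfies $S_n(r)=S\cap X_n$ and $r_k\not\equiv n+2\pmod k$ for $k\le n$. So $S\cap X_n\in\mathcal E_n$ by Definition~\ref{def:extendible}. Conversely, given $A\in\mathcal E_n$ with a witnessing profile $r$, extend it by any $r_{n+1}\not\equiv 1\pmod{n+1}$. This is possible since $n+1\ge2$, and then $S_{n+1}(r')=A\cup\{n+2\}$. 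This step also verifies the ``equivalently'' clause of the definition.

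\emph{Step 3 (conclusion).} The two families are disjoint because one consists of sets avoiding $n+2$ and the other of sets containing it. The map $A\mapsto A\cup\{n+2\}$ is injective on subsets of $X_n$. Together these give the disjoint-union decomposition and $N(n+1)=N(n)+E(n)$.

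The only point needing care is the bookkeeping of which coordinate is active on which candidate. In particular, the new modulus $n+1$ must touch only $n+2$, and the condition at $k=n+1$ must always be satisfiable both ways. Both follow directly from the definition of $S_n(r)$, so I expect no real obstacle.
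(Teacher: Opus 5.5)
Your proposal is correct and follows essentially the same argument as the paper. You restrict a length-$(n+1)$ profile to its first $n$ coordinates, note that the new coordinate $r_{n+1}$ acts only on $n+2$, and use the free choice $r_{n+1}\equiv 1$ or $r_{n+1}\not\equiv 1 \pmod{n+1}$ to realize both $A$ and, when $A\in\mathcal E_n$, $A\cup\{n+2\}$; your explicit split by whether $n+2$ lies in the set just makes the disjointness and the count $N(n+1)=N(n)+E(n)$ slightly more transparent.
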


\begin{proof}
Let
\((r_2,\dots,r_n,r_{n+1})\)
be a profile of length \(n+1\).  For \(m\le n+1\), survival of \(m\)
depends only on the coordinates with index \(k<m\).  Hence
\[
S_{n+1}(r_2,\dots,r_n,r_{n+1})\cap\{2,\dots,n+1\}
=
S_n(r_2,\dots,r_n).
\]
Thus every member of \(\mathcal F_{n+1}\) is either \(A\) or
\(A\cup\{n+2\}\) for some \(A\in\mathcal F_n\).

The point \(n+2\) survives precisely when
\[
r_k\not\equiv n+2\pmod{k}
\qquad (2\le k\le n+1).
\]
The condition for \(k=n+1\) can always be met by choosing
\(r_{n+1}\not\equiv1\pmod{n+1}\).  Hence \(n+2\) can be retained exactly
when the truncated profile realizes an element of \(\mathcal E_n\).

Conversely, if \(A\in\mathcal F_n\), a profile realizing \(A\) can be
extended with \(r_{n+1}\equiv1\pmod{n+1}\), which eliminates \(n+2\).
If \(A\in\mathcal E_n\), choose a realizing profile under which \(n+2\)
survives and take \(r_{n+1}\not\equiv1\pmod{n+1}\).  Therefore
\[
\mathcal F_{n+1}
=
\mathcal F_n\sqcup
\{A\cup\{n+2\}:A\in\mathcal E_n\}.
\]
Taking cardinalities gives
\[
N(n+1)=N(n)+E(n).
\]
\end{proof}

\subsection{An exact covering criterion}

Extendibility admits the following combinatorial characterization.

\begin{definition}
Let $A\subseteq\{2,\dots,n+1\}$. For each $k\in\{2,\dots,n\}$ define the set of \emph{admissible residues}
\[
\Omega_k^+(A):=
(\mathbb Z/k\mathbb Z)\setminus
\Bigl(
\{m\bmod k:\ m\in A,\ m>k\}
\cup
\{(n+2)\bmod k\}
\Bigr).
\]
Thus $\Omega_k^+(A)$ consists of the residue classes modulo $k$ that do not eliminate any element of $A$ larger than $k$, and do not eliminate the new point $n+2$.
\end{definition}

\begin{definition}
For $a\in\Omega_k^+(A)$, define the corresponding \emph{covered block of excluded points}
\[
C_{k,a}(A):=
\{m\in\{2,\dots,n+1\}\setminus A:\ m>k,\ m\equiv a \pmod{k}\}.
\]
\end{definition}

\begin{theorem}\label{thm:extendibility-criterion}
Let \(A\in\mathcal F_n\). Then \(A\) is extendible if and only if there exists a choice
\[
a_k\in\Omega_k^+(A)\qquad (2\le k\le n)
\]
such that
\[
\{2,\dots,n+1\}\setminus A
\subseteq
\bigcup_{k=2}^{n} C_{k,a_k}(A).
\]
\end{theorem}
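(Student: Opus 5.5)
The plan is to unwind Definition~\ref{def:extendible} directly and translate it, one coordinate at a time, into the language of $\Omega_k^+(A)$ and the blocks $C_{k,a}(A)$. By definition, $A$ is extendible if and only if there is a profile $r=(r_2,\dots,r_n)$ with $S_n(r)=A$ and $r_k\not\equiv n+2\pmod k$ for all $2\le k\le n$. The condition $S_n(r)=A$ splits into two requirements. The first is that every $m\in A$ survives, i.e.\ $r_k\not\equiv m\pmod k$ whenever $m\in A$ and $m>k$. The second is that every $m\in X_n\setminus A$ is eliminated, i.e.\ there is some $k<m$ with $m\equiv r_k\pmod k$. I will show that a profile satisfies the survival conditions for $A$ and for $n+2$ exactly when $r_k\in\Omega_k^+(A)$ for every $k$, and that, under this condition, the elimination requirement is exactly the covering condition in the theorem.

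For the forward direction, take such a profile and set $a_k:=r_k$. The survival conditions for elements of $A$ larger than $k$, together with $r_k\not\equiv n+2$, say precisely that $a_k$ avoids every class removed in the definition of $\Omega_k^+(A)$, so $a_k\in\Omega_k^+(A)$. Now let $m\in X_n\setminus A$. Since $m\notin S_n(r)$, there is some $k<m$ with $m\equiv a_k\pmod k$. This $k$ lies in $\{2,\dots,n\}$ because $m\le n+1$, so $m\in C_{k,a_k}(A)$ by definition. Hence the union covers $X_n\setminus A$.

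For the converse, given $a_k\in\Omega_k^+(A)$ satisfying the covering condition, define the profile by $r_k:=a_k$. Each $m\in A$ survives: for any $k<m$, the class of $m$ modulo $k$ was removed from $\Omega_k^+(A)$, so $a_k\not\equiv m$. Each $m\notin A$ is eliminated: by the covering condition $m\in C_{k,a_k}(A)$ for some $k$, which gives $k<m$ and $m\equiv r_k$. Thus $S_n(r)=A$, and since also $r_k\not\equiv n+2$ for all $k$, the set $A$ is extendible. Equivalently, by Theorem~\ref{thm:structural-recurrence}, $A\cup\{n+2\}\in\mathcal F_{n+1}$.

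I expect no real obstacle; the argument only requires bookkeeping. The one point needing care is the index convention. The inequality $m>k$ in $C_{k,a}(A)$ and in $\Omega_k^+(A)$ must match the activation rule "$k<m$" in $S_n$. One should also note that the class of $n+2$ is forbidden for every $k\le n$ without any size restriction, since $n+2>k$ always holds. If some $\Omega_k^+(A)$ is empty, both sides of the equivalence fail, and the argument above already covers this case.
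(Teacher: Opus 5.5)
Your proposal is correct and matches the paper's proof: in both directions you identify the profile with the choice $a_k=r_k$. You then match the survival of $A$ and of $n+2$ to $a_k\in\Omega_k^+(A)$, and the elimination of each point of $X_n\setminus A$ to the covering condition, exactly as the paper does.
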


\begin{proof}
Suppose first that $A$ is extendible. Then there exists a profile
\(r=(r_2,\dots,r_n)\)
with $S_n(r)=A$ and with $n+2$ surviving. By definition of survival of $A$ and $n+2$, we have
\[
r_k\in\Omega_k^+(A)
\qquad (2\le k\le n).
\]
Put $a_k=r_k$. If $m\notin A$, then since $S_n(r)=A$, there exists some $k<m$ such that
\[
m\equiv r_k \pmod{k}.
\]
Hence $m\in C_{k,a_k}(A)$. Therefore every excluded point belongs to the union
\[
\bigcup_{k=2}^{n} C_{k,a_k}(A).
\]

Conversely, suppose there exists a choice $a_k\in\Omega_k^+(A)$ such that
\[
\{2,\dots,n+1\}\setminus A
\subseteq
\bigcup_{k=2}^{n} C_{k,a_k}(A).
\]
Define a profile by $r_k=a_k$. Since $a_k\in\Omega_k^+(A)$, no element of $A$ larger than $k$ is eliminated by the congruence modulo $k$, and neither is $n+2$. On the other hand, every element outside $A$ is eliminated by at least one of the chosen congruences by the covering hypothesis. Hence
\(S_n(r)=A\)
and $n+2$ survives. Therefore $A$ is extendible.
\end{proof}

Two immediate corollaries are useful.

\begin{corollary}[Local obstruction]\label{cor:local-obstruction}
Let \(A\in\mathcal F_n\). If there exists \(k\in\{2,\dots,n\}\) such that
\[
\Omega_k^+(A)=\varnothing,
\]
then $A$ is not extendible.
\end{corollary}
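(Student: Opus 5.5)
This follows directly from Theorem~\ref{thm:extendibility-criterion}; alternatively, one can argue straight from Definition~\ref{def:extendible}. I would give the direct argument, since it is shorter and avoids the covering condition altogether.

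Suppose, for contradiction, that \(A\) is extendible. Then there is a truncated profile \(r=(r_2,\dots,r_n)\) with \(S_n(r)=A\) and \(r_j\not\equiv n+2\pmod j\) for every \(2\le j\le n\). Fix the index \(k\) with \(\Omega_k^+(A)=\varnothing\) and look at the coordinate \(r_k\).

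First, \(r_k\) cannot lie in \(\{m\bmod k: m\in A,\ m>k\}\). If \(r_k\equiv m\pmod k\) for some \(m\in A\) with \(m>k\), then \(k<m\), so \(m\) would be eliminated and \(m\notin S_n(r)=A\), a contradiction. Second, \(r_k\not\equiv n+2\pmod k\) by the extendibility condition. Together these say \(r_k\in\Omega_k^+(A)\), contradicting \(\Omega_k^+(A)=\varnothing\).

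Equivalently, the existence of the family \((a_k)\) required in Theorem~\ref{thm:extendibility-criterion} already fails at coordinate \(k\), since \(a_k\in\Omega_k^+(A)\) is impossible. There is no real obstacle here; the only point needing care is that the survival condition concerns exactly the elements \(m>k\), which matches the definition of \(\Omega_k^+(A)\).
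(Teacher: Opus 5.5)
Your argument is correct and is essentially the paper's own: the paper simply observes that \(\Omega_k^+(A)=\varnothing\) leaves no admissible choice \(a_k\) in Theorem~\ref{thm:extendibility-criterion}. Your direct argument, that a realizing profile under which \(n+2\) survives must have \(r_k\in\Omega_k^+(A)\), is exactly the forward half of that theorem's proof written out inline.
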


\begin{proof}
If $\Omega_k^+(A)=\varnothing$, there is no admissible choice at modulus $k$, so the criterion in Theorem~\ref{thm:extendibility-criterion} cannot be satisfied.
\end{proof}

\begin{definition}
Let \(A\subseteq\{2,\dots,n+1\}\). For
\(m\in\{2,\dots,n+1\}\setminus A\), define the set of \emph{witness moduli}
\[
W_A(m):=\{k\in\{2,\dots,m-1\}: m\bmod k \in \Omega_k^+(A)\}.
\]
\end{definition}

\begin{corollary}[Necessary witness condition]\label{cor:witness}
Let \(A\in\mathcal F_n\). If \(A\) is extendible, then
\[
W_A(m)\neq\varnothing
\qquad\text{for every } m\in\{2,\dots,n+1\}\setminus A.
\]
\end{corollary}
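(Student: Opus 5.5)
The plan is to read the condition off the covering criterion of Theorem~\ref{thm:extendibility-criterion}. Suppose \(A\in\mathcal F_n\) is extendible. Then, by that theorem, we can fix a choice \(a_k\in\Omega_k^+(A)\) for \(2\le k\le n\) such that
\[
\{2,\dots,n+1\}\setminus A\subseteq\bigcup_{k=2}^{n}C_{k,a_k}(A).
\]
Equivalently, we may take the profile \(r\) with \(S_n(r)=A\) under which \(n+2\) survives, as in the first half of that proof, and set \(a_k=r_k\).

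Now fix an excluded point \(m\in\{2,\dots,n+1\}\setminus A\). By the covering, there is an index \(k\) with \(m\in C_{k,a_k}(A)\). Unwinding the definition of the covered block gives three facts: \(m>k\), so \(k\in\{2,\dots,m-1\}\); \(m\equiv a_k\pmod k\); and \(a_k\in\Omega_k^+(A)\). Together these say \(m\bmod k=a_k\in\Omega_k^+(A)\), so by definition \(k\in W_A(m)\). Hence \(W_A(m)\neq\varnothing\). Since \(m\) was arbitrary, this holds for every excluded point, which is the claim.

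There is no real obstacle here. The only point needing care is the index range: \(k\) ranges over \(2,\dots,n\) in the criterion, while \(W_A(m)\) uses \(k\le m-1\). The constraint \(m>k\) built into \(C_{k,a}(A)\) handles this automatically, and it also gives \(k\le m-1\le n\).
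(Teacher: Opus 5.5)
Your proof is correct and follows essentially the same argument as the paper: apply the covering criterion of Theorem~\ref{thm:extendibility-criterion}, place each excluded point $m$ in some $C_{k,a_k}(A)$, and read off $m\bmod k=a_k\in\Omega_k^+(A)$ with $k<m$. Your remark that the condition $m>k$ in $C_{k,a}(A)$ also gives $k\le m-1\le n$ is a useful detail that the paper leaves implicit.
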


\begin{proof}
If $A$ is extendible, Theorem~\ref{thm:extendibility-criterion} provides a choice $a_k\in\Omega_k^+(A)$ whose covered blocks contain every excluded point. Hence each excluded point $m$ belongs to some $C_{k,a_k}(A)$, and therefore $m\bmod k\in\Omega_k^+(A)$ for that $k$. Thus $k\in W_A(m)$.
\end{proof}

\begin{remark}
Corollary~\ref{cor:witness} gives a necessary condition but not a sufficient one. The issue is that distinct excluded points may require incompatible residue choices at the same modulus.
\end{remark}

\begin{example}\label{ex:ext-vs-nonext}
Let $n=5$ and work in the ambient set $\{2,3,4,5,6\}$, with new point $7$.

For
\(A=\{2,3,5\},\)
we have
\[
\Omega_2^+(A)=\{0\},\qquad
\Omega_3^+(A)=\{0\},\qquad
\Omega_4^+(A)=\{0,2\},\qquad
\Omega_5^+(A)=\{0,1,3,4\}.
\]
The excluded points are $\{4,6\}$. Choosing $a_2=0$ already gives
\(C_{2,0}(A)=\{4,6\},\)
so the covering condition is satisfied and $A$ is extendible.

For
\(A=\{2,4,6\},\)
the residues of the surviving points larger than $2$ together with the residue of $7$ fill both residue classes modulo $2$, so
\(\Omega_2^+(A)=\varnothing.\)
Hence $A$ is not extendible by Corollary~\ref{cor:local-obstruction}.
\end{example}

\section{Algorithms and computations}
\label{sec:algorithms}
\subsection{Exact dynamic enumeration}

The survivor set is the complement in \(X_n\) of the union of the local
elimination sets.  This observation gives an exact dynamic algorithm that
deduplicates partial unions after every modulus.

\begin{proposition}[Dynamic union recurrence]\label{prop:dynamic-enumeration}
Fix \(n\ge1\), and define
\[
\mathcal U_{1,n}:=\{\varnothing\}.
\]
For \(2\le j\le n\), define recursively
\[
\mathcal U_{j,n}:=
\left\{
U\cup E^{(n)}_{j,a}:
U\in\mathcal U_{j-1,n},\ a\in\mathbb Z/j\mathbb Z
\right\}.
\]
Then
\[
\mathcal F_n=
\{X_n\setminus U:U\in\mathcal U_{n,n}\},
\qquad
N(n)=|\mathcal U_{n,n}|.
\]
The same recurrence may use one representative of each locally distinct set
\(E^{(n)}_{j,a}\) without changing its output.
\end{proposition}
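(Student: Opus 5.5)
The plan is to prove, by induction on \(j\), the invariant
\[
\mathcal U_{j,n}=\Bigl\{\textstyle\bigcup_{k=2}^{j}E^{(n)}_{k,r_k}:\ r_k\in\mathbb Z/k\mathbb Z\ (2\le k\le j)\Bigr\},
\]
where the empty union (case \(j=1\)) is \(\varnothing\). The base case is the definition \(\mathcal U_{1,n}=\{\varnothing\}\). For the inductive step, every element of \(\mathcal U_{j,n}\) has the form \(U\cup E^{(n)}_{j,a}\) with \(U\in\mathcal U_{j-1,n}\). By the inductive hypothesis, \(U\) is a union indexed by some partial profile \((r_2,\dots,r_{j-1})\), so appending \(r_j=a\) gives the desired form. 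Conversely, any union over a partial profile \((r_2,\dots,r_j)\) splits as the union for \((r_2,\dots,r_{j-1})\), which lies in \(\mathcal U_{j-1,n}\), together with \(E^{(n)}_{j,r_j}\). The deduplication inherent in using sets, rather than multisets, plays no role here, since we are describing a set of sets.

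Next I identify \(S_n(r)\) with a complement. By definition, \(m\in X_n\) fails to survive exactly when some \(k\) with \(2\le k<m\) has \(m\equiv r_k\pmod k\). Since \(m\le n+1\) forces \(k\le n\), and \(k<m\) is exactly the condition \(m\in\{k+1,\dots,n+1\}\), this says precisely that \(m\in E^{(n)}_{k,r_k}\). Hence
\[
S_n(r)=X_n\setminus\bigcup_{k=2}^{n}E^{(n)}_{k,r_k}.
\]
Taking \(j=n\) in the invariant then gives \(\mathcal F_n=\{X_n\setminus U:U\in\mathcal U_{n,n}\}\). Complementation in \(X_n\) is a bijection on subsets, so \(N(n)=|\mathcal U_{n,n}|\).

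For the final claim, restricting each \(a\) to one representative per distinct set \(E^{(n)}_{j,a}\) produces exactly the same collection of sets \(U\cup E^{(n)}_{j,a}\) at each step. Indeed, the recurrence depends on \(a\) only through the set \(E^{(n)}_{j,a}\), so the output is unchanged by induction; Lemma~\ref{lem:local-count} gives the number \(c_{j,n}\) of representatives needed. There is no real obstacle in this argument. The only points needing care are the index bookkeeping in the step from ``\(k<m\)'' to ``\(m\in E^{(n)}_{k,r_k}\)'' and the observation that coordinates act independently, which is what makes the stepwise union complete.
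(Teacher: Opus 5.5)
Your proof is correct and follows the paper's argument. The paper likewise notes that \(\mathcal U_{j,n}\) lists exactly the partial unions \(E^{(n)}_{2,r_2}\cup\cdots\cup E^{(n)}_{j,r_j}\), takes complements at \(j=n\), and observes that only the set \(E^{(n)}_{j,a}\) matters; your explicit induction and your check that \(S_n(r)=X_n\setminus\bigcup_{k=2}^{n}E^{(n)}_{k,r_k}\) just spell out steps the paper leaves implicit.
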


\begin{proof}
After the moduli \(2,\ldots,j\) have been chosen, the set of eliminated points
is exactly a union
\[
E^{(n)}_{2,r_2}\cup\cdots\cup E^{(n)}_{j,r_j}.
\]
The recursive definition of \(\mathcal U_{j,n}\) therefore lists precisely all
such partial unions, with duplicates removed because \(\mathcal U_{j,n}\) is a
set.  At \(j=n\), taking complements in \(X_n\) gives all survivor sets.
Replacing congruent choices that determine the same local elimination set does
not change any union.
\end{proof}

For computation, subsets of \(X_n\) may be represented by bit vectors.
At stage \(j\), form the bitwise unions of each current vector with the
\(c_{j,n}\) locally distinct vectors for modulus \(j\), and remove
duplicates.  Extendibility can then be tested either from
Definition~\ref{def:extendible} or from
Theorem~\ref{thm:extendibility-criterion}.

\subsection{Initial values}

The following values are obtained by exact enumeration and reproduce the initial terms of \seqnum{A396595}. The local counts \(P(n)\) are included to display the collapse from locally distinct profiles to distinct outputs.

\begin{table}[ht]
\centering
\small
\begin{tabular}{c|cccccccccccc}
$n$ & 1 & 2 & 3 & 4 & 5 & 6 & 7 & 8 & 9 & 10 & 11 & 12 \\
\hline
$N(n)$ & 1 & 2 & 3 & 4 & 6 & 8 & 11 & 14 & 17 & 22 & 30 & 38 \\
$P(n)$ & 1 & 2 & 4 & 12 & 36 & 144 & 576 & 2880 & 14400 & 86400 & 518400 & 3628800
\end{tabular}
\caption{Distinct survivor sets $N(n)$ and local equivalence classes $P(n)$.}
\label{tab:NnPn}
\end{table}

\begin{table}[ht]
\centering
\begin{tabular}{c|cccccccccc}
$n$ & 1 & 2 & 3 & 4 & 5 & 6 & 7 & 8 & 9 & 10 \\
\hline
$E(n)$ & 1 & 1 & 1 & 2 & 2 & 3 & 3 & 3 & 5 & 8
\end{tabular}
\caption{The number $E(n)$ of extendible survivor sets in $\mathcal F_n$.}
\label{tab:En}
\end{table}

The gap between the local count \(P(n)\) and the number of distinct
survivor sets is already substantial for small \(n\).  For example,
\[
P(8)=2880,\qquad N(8)=14.
\]
This motivates studying the realizable family \(\mathcal F_n\) and its
extendible subfamily \(\mathcal E_n\), rather than the local count alone.

The values in Table~\ref{tab:En}, together with the recurrence
\[
N(n+1)=N(n)+E(n),
\]
recover the initial segment of Table~\ref{tab:NnPn}. For instance,
\[
N(6)=N(5)+E(5)=6+2=8,
\]
and similarly
\[
N(11)=N(10)+E(10)=22+8=30.
\]

\begin{example}\label{ex:F5}
For \(n=5\), exact enumeration gives
\[
\mathcal F_5=
\bigl\{
\{2,3,5\},
\{2,3\},
\{2,4,6\},
\{2,4\},
\{2,6\},
\{2\}
\bigr\},
\]
so \(N(5)=6\).
\end{example}

\begin{example}\label{ex:E5}
The new point at the transition from \(n=5\) to \(n=6\) is \(7\).  Among the
six elements of \(\mathcal F_5\), the extendible ones are
\[
\{2,3,5\},\qquad \{2,3\}.
\]
Thus
\(E(5)=2,\)
and Theorem~\ref{thm:structural-recurrence} gives
\[
N(6)=N(5)+E(5)=6+2=8.
\]
\end{example}

The recurrence also gives
\(E(n)=N(n+1)-N(n);\)
for example, \(E(10)=8\) and \(N(10)=22\) yield \(N(11)=30\).

\subsection{Comparison with prime-admissible subsets}

For small \(n\), we computed \(A_{\rm adm}(n)\) by enumerating all
subsets of \(X_n\) and testing whether each prime modulus omits at least
one residue class.  The values of \(N(n)\) were computed from the dynamic
recurrence in Proposition~\ref{prop:dynamic-enumeration}.

Define
\[
\Delta(n):=
\frac{\log A_{\rm adm}(n)-\log N(n)}{n/\log n}.
\]

\begin{table}[ht]
\centering
\begin{tabular}{c|r|r|c|c}
\(n\) &
\(N(n)\) &
\(A_{\rm adm}(n)\) &
\(A_{\rm adm}(n)/N(n)\) &
\(\Delta(n)\)\\
\hline
5  & 6   & 10  & 1.667 & 0.164\\
10 & 22  & 45  & 2.045 & 0.165\\
15 & 76  & 170 & 2.237 & 0.145\\
20 & 237 & 535 & 2.257 & 0.122
\end{tabular}
\caption{Exact comparison between survivor sets and prime-admissible
subsets.}
\label{tab:admissible-comparison}
\end{table}

The ordinary ratio \(A_{\rm adm}(n)/N(n)\) need not tend to \(1\).
Theorem~\ref{thm:admissible-equivalence} asserts only that
\[
\log\frac{A_{\rm adm}(n)}{N(n)}
=
o\left(\frac{n}{\log n}\right).
\]
The values of \(\Delta(n)\) in Table~\ref{tab:admissible-comparison}
increase between \(n=5\) and \(n=10\) and decrease for \(n=15,20\).
This range is too short to indicate an asymptotic trend.  The computations
are implemented in the ancillary file \texttt{enumeration.py}.

\section{Open problems}
\label{sec:open-problems}

The results of this paper leave two main asymptotic questions.

\begin{enumerate}

\item
Determine the asymptotic behavior of the extension ratio
\[
\frac{E(n)}{N(n)}.
\]
In particular, is it true that
\[
\frac{E(n)}{N(n)}\longrightarrow 0
\qquad (n\to\infty)?
\]
More generally, determine the correct order of magnitude of
\(E(n)/N(n)\).

\item
Determine the exponential growth constant of \(N(n)\).  Does the
limit
\[
\lim_{n\to\infty}
\frac{\log N(n)}{n/\log n}
\]
exist, and if so, is it equal to \(\log 2\)?

By Theorem~\ref{thm:admissible-equivalence}, the same question may be
formulated in terms of \(A_{\rm adm}(n)\).  The results of this paper give
\[
\log 2
\le
\liminf_{n\to\infty}
\frac{\log N(n)}{n/\log n}
\le
\limsup_{n\to\infty}
\frac{\log N(n)}{n/\log n}
\le
2\log 2.
\]
As noted in Remark~\ref{rem:Hk-barrier}, an upper bound
\[
\log A_{\rm adm}(n)
\le
(\log 2+o(1))\frac{n}{\log n}
\]
would imply \(H(k)\sim k\log k\).

\end{enumerate}

\appendix

\section{Global profiles and finite restrictions}
\label{subsec:global-profile}
The finite model is the main subject of this paper, but a global formulation
clarifies both the origin of the construction and the meaning of the counting
function $N(n)$. Let
\[
\Omega:=\prod_{k\geq 2}\mathbb Z/k\mathbb Z
\]
be the space of global residue profiles.  For
\(F=(F_k)_{k\geq 2}\in\Omega,\)
define the corresponding global survivor set by
\[
\mathcal S(F):=
\left\{
m\geq 2:
m\not\equiv F_k\pmod k
\text{ for every }2\leq k<m
\right\}.
\]

For fixed \(m\), membership in \(\mathcal S(F)\) depends only on
\(F_2,\ldots,F_{m-1}\).  In particular, \(F_k\) affects only candidates
\(m>k\), excluding those satisfying
\(m\equiv F_k\pmod{k}\).
Consequently, a single global profile \(F\) determines an infinite binary
survival pattern, whose support is exactly \(\mathcal S(F)\).

If $r=(r_2,\ldots,r_n)$ is the finite profile obtained by restricting $F$
to the coordinates $2,\ldots,n$, then
\[
S_n(r)=\mathcal S(F)\cap\{2,\ldots,n+1\}.
\]

\begin{proposition}[A profile for upper twin primes]
\label{prop:coded-twin}
Let $F^{\mathrm{tw}}$ be defined by
\[
F^{\mathrm{tw}}_n=
\begin{cases}
0, & \text{if $n$ is prime or $n$ is even},\\
2, & \text{if $n$ is odd and composite}.
\end{cases}
\]
Then
\[
\mathcal S(F^{\mathrm{tw}})
=
\{2,3\}\cup
\{q\in\mathbb P:q-2\in\mathbb P\}.
\]
When listed increasingly, the survivor sequence is obtained by prefixing
$2,3$ to \seqnum{A006512}, the sequence of upper members of twin-prime pairs.
\end{proposition}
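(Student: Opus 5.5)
We prove the two inclusions separately, working with \(m\ge 2\) and the profile \(F=F^{\mathrm{tw}}\). It helps to record what each coordinate does. For an even modulus \(k\) we have \(F_k=0\), so it eliminates the multiples of \(k\) exceeding \(k\). For a prime modulus \(p\) we also have \(F_p=0\), so it eliminates the multiples of \(p\) exceeding \(p\). For an odd composite modulus \(k\) we have \(F_k=2\), so it eliminates every \(m>k\) with \(m\equiv 2\pmod k\).

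\emph{Step 1: composites are eliminated.} Let \(m\ge 4\) be composite, and choose a prime divisor \(p\mid m\) with \(p<m\). Then \(F_p=0\) and \(m\equiv0\pmod p\), so \(m\) is eliminated. Hence \(\mathcal S(F)\subseteq\{2,3\}\cup\mathbb P\). The values \(2\) and \(3\) survive, because the only available modulus is \(k=2<3\), and \(3\not\equiv0\pmod 2\).

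\emph{Step 2: a prime \(q\ge5\) survives iff \(q-2\) is prime.} The prime \(q\) is not eliminated by any modulus with \(F_k=0\), since that would require \(k\mid q\) with \(2\le k<q\). So \(q\) is eliminated iff some odd composite \(k<q\) satisfies \(k\mid q-2\).
\begin{itemize}
\item If \(q-2\) is prime, no such \(k\) exists. The only divisors of \(q-2\) are \(1\) and \(q-2\), and neither is composite.
\item If \(q-2\) is composite, then \(k=q-2\) itself is an odd composite with \(k<q\) and \(q\equiv 2\pmod k\), so \(q\) is eliminated. (Here \(q-2\) is odd because \(q\) is odd, and \(q-2\ge 3\).)
\end{itemize}
For \(q=5\), we have \(q-2=3\), which is prime, so \(5\) survives, consistent with the case above. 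Combining this with Step 1 gives the stated set. The listing statement then follows, since A006512 is \(5,7,13,19,\dots\).

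The only delicate point is the bookkeeping of which coordinates are active, namely \(k<m\). In particular, the eliminating modulus \(k=q-2\) in the second case really is smaller than \(q\). Beyond that, no obstacle is expected.
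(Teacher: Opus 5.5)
Your proof is correct and follows essentially the same route as the paper: composites are eliminated by a proper prime divisor carrying residue $0$, residue-$0$ moduli cannot eliminate a prime, and a prime $q\ge5$ is eliminated exactly when some odd composite $k$ divides $q-2$. The only cosmetic difference is that you name $k=q-2$ itself as the eliminating modulus, where the paper says that the odd number $q-2$ has an odd composite divisor if and only if it is composite; the two are the same observation.
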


\begin{proof}
If $m$ is composite and even, then $m\equiv0\pmod2$, so it is eliminated
at the prime modulus $2$.  If $m$ is composite and odd, let $p$ be its
least prime divisor.  Then $p<m$ and $m\equiv0\pmod p$, so the prime
modulus $p$ eliminates $m$. Hence every survivor is prime.

The integers \(2,3,5\) survive by direct inspection. Now let $q>5$ be prime. Prime moduli do not eliminate $q$.  Even composite
moduli carry residue $0$, so they do not eliminate an odd prime. Thus only
odd composite moduli matter, and they all carry residue $2$. Therefore
$q$ is eliminated if and only if there exists an odd composite $n<q$ such
that
\(q\equiv2\pmod n,\)
or equivalently $n\mid(q-2)$.  Since $q-2$ is odd, such an odd composite
divisor exists if and only if $q-2$ is composite.  Hence $q$ survives
exactly when $q-2$ is prime.
\end{proof}

For this profile, the survivors up to \(50\) are
\[
2,3,5,7,13,19,31,43.
\]
Since the profile itself is defined using primality and compositeness, this characterization has no implication for the infinitude of twin primes.

Define the global survivor family
\[
\mathfrak M:=
\{\mathcal S(F):F\in\Omega\}.
\]

\begin{proposition}[Global interpretation of $N(n)$]
\label{prop:global-interpretation}
For every $n\geq1$,
\[
\mathcal F_n
=
\{\mathcal S(F)\cap X_n:F\in\Omega\}
=
\{S\cap X_n:S\in\mathfrak M\}.
\]
Consequently,
\[
N(n)
=
\left|
\{S\cap X_n:S\in\mathfrak M\}
\right|.
\]
\end{proposition}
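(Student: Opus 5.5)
The plan is to prove the two set equalities by double inclusion, using the restriction identity stated just before Proposition~\ref{prop:coded-twin}: if \(r\) is the restriction of \(F\in\Omega\) to the coordinates \(2,\dots,n\), then \(S_n(r)=\mathcal S(F)\cap X_n\). The count for \(N(n)\) then follows at once from the definition \(N(n)=|\mathcal F_n|\).

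For the first equality, the inclusion \(\supseteq\) is immediate. Given \(F\in\Omega\), let \(r=(F_2,\dots,F_n)\). Then \(\mathcal S(F)\cap X_n=S_n(r)\in\mathcal F_n\).

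For \(\subseteq\), start from \(S_n(r)\in\mathcal F_n\) and extend \(r\) arbitrarily to a global profile \(F\). For instance, put \(F_k=r_k\) for \(2\le k\le n\) and \(F_k=0\) for \(k>n\). The restriction of \(F\) to the coordinates \(2,\dots,n\) is exactly \(r\), so the restriction identity gives \(S_n(r)=\mathcal S(F)\cap X_n\).

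The restriction identity itself deserves a one-line verification, since everything rests on it. For \(m\in X_n\) we have \(m\le n+1\), so the condition \(k<m\) forces \(k\le n\). Hence the membership conditions for \(m\) in \(\mathcal S(F)\) and in \(S_n(r)\) involve the same coordinates \(F_k=r_k\) with \(2\le k<m\), and so they coincide.

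The second equality, \(\{\mathcal S(F)\cap X_n:F\in\Omega\}=\{S\cap X_n:S\in\mathfrak M\}\), is a reindexing, because \(\mathfrak M\) is by definition the image of \(F\mapsto\mathcal S(F)\). Taking cardinalities then yields the formula for \(N(n)\).

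There is no genuine obstacle in this argument. The only point requiring care is the restriction step, namely noting that coordinates \(F_k\) with \(k>n\) never act on candidates in \(X_n\).
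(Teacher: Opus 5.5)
Your proposal is correct and matches the paper's proof: restrict \(F\) to \((F_2,\dots,F_n)\) for one inclusion, extend \(r\) arbitrarily beyond \(n\) for the other, and note that coordinates with \(k>n\) never act on \(X_n\). Your uniform argument also covers \(n=1\) through the empty profile, so you do not need the separate case the paper writes out.
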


\begin{proof}
For $n=1$, the assertion is immediate, since $X_1=\{2\}$ and
$\mathcal F_1=\{\{2\}\}$.  Assume henceforth that $n\geq2$.
Let $S=\mathcal S(F)\in\mathfrak M$, and restrict $F$ to the finite
profile
\[
r=(F_2,\ldots,F_n).
\]
For every $m\in X_n$, all coordinates that can act on $m$ satisfy
\(2\leq k<m\leq n+1,\)
and hence $k\leq n$.  Therefore
\[
S\cap X_n=S_n(r)\in\mathcal F_n.
\]
This proves
\[
\{S\cap X_n:S\in\mathfrak M\}
\subseteq
\mathcal F_n.
\]

Conversely, let $A\in\mathcal F_n$.  By definition, there is a finite
profile
\(r=(r_2,\ldots,r_n)\)
such that $A=S_n(r)$.  Extend $r$ to a global profile $F\in\Omega$ by
setting
\(F_k=r_k \qquad (2\leq k\leq n)\)
and choosing the coordinates $F_k$ arbitrarily for $k>n$.  These later
coordinates cannot affect any $m\in X_n$.  It follows that
\[
\mathcal S(F)\cap X_n=S_n(r)=A.
\]
Thus every member of $\mathcal F_n$ is the restriction of a global
survivor set.  This proves the reverse inclusion and the stated identity
for $N(n)$.
\end{proof}

\begin{corollary}[Global realizability criterion]
\label{cor:global-realizability}
Let $T\subseteq\{2,3,\ldots\}$.  Then
\[
T\in\mathfrak M
\quad\Longleftrightarrow\quad
T\cap X_n\in\mathcal F_n
\qquad\text{for every }n\geq1.
\]
\end{corollary}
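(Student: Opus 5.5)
The forward implication is immediate from Proposition~\ref{prop:global-interpretation}. If \(T=\mathcal S(F)\in\mathfrak M\), then \(T\cap X_n=\mathcal S(F)\cap X_n\in\mathcal F_n\) for every \(n\geq1\).

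For the converse, assume \(T\cap X_n\in\mathcal F_n\) for every \(n\). We must produce one global profile \(F\in\Omega\) with \(\mathcal S(F)=T\). For each \(n\), let
\[
\Phi_n:=\{(r_2,\ldots,r_n):S_n(r)=T\cap X_n\}.
\]
By hypothesis each \(\Phi_n\) is nonempty, and it is finite since it lies in \(\prod_{k=2}^n\mathbb Z/k\mathbb Z\). The key compatibility step is that truncation maps \(\Phi_{n+1}\) into \(\Phi_n\). This is the observation from the proof of Theorem~\ref{thm:structural-recurrence}: survival of \(m\le n+1\) depends only on \(r_k\) with \(k<m\), so
\[
S_{n+1}(r_2,\ldots,r_{n+1})\cap X_n=S_n(r_2,\ldots,r_n).
\]
Hence a profile realizing \(T\cap X_{n+1}\) truncates to one realizing \(T\cap X_n\).

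We now have an inverse system of nonempty finite sets, and we apply König's lemma, or compactness of \(\Omega\) in the product topology. Concretely, call \(s\in\Phi_n\) \emph{good} if for every \(N\ge n\) it is the truncation of some element of \(\Phi_N\). Let \(G_{n,N}\subseteq\Phi_n\) be the image of \(\Phi_N\) under truncation. These sets are nonempty and decreasing in \(N\), and they lie in a finite set, so they stabilize to a nonempty set \(G_n\) consisting of the good elements. Every good \(s\in G_n\) extends to some good \(s'\in G_{n+1}\). To see this, for each \(N\) pick a witness in \(\Phi_N\) lying over \(s\); its truncation to level \(n+1\) lies over \(s\). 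Only finitely many elements of \(\Phi_{n+1}\) lie over \(s\), so one of them, \(s'\), occurs for infinitely many \(N\), and hence for all \(N\) by monotonicity. Starting from any element of \(G_1\) and extending inductively produces a coherent sequence, which is a global profile \(F\) whose restriction to \(2,\ldots,n\) lies in \(\Phi_n\) for every \(n\).

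Finally, by the restriction identity in the appendix, \(\mathcal S(F)\cap X_n=S_n(F_2,\ldots,F_n)=T\cap X_n\) for every \(n\). Since \(\bigcup_n X_n=\{2,3,\ldots\}\), this gives \(\mathcal S(F)=T\), so \(T\in\mathfrak M\). The only step needing care is the compactness or König step. We cannot simply take the profiles realizing \(T\cap X_n\) independently, because a realizing profile at level \(n\) need not extend to level \(n+1\). The finiteness of each \(\Phi_n\) is exactly what makes the extension argument work.
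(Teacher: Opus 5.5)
Your proposal is correct and follows essentially the same route as the paper: the same finite nonempty sets of profiles realizing $T\cap X_n$, linked by truncation, and König's lemma to obtain a coherent global profile $F$ with $\mathcal S(F)=T$. The only difference is that you prove the König step inline by stabilizing the images $G_{n,N}$, whereas the paper simply cites König's infinity lemma; your inline argument is valid.
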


\begin{proof}
The forward implication follows from
Proposition~\ref{prop:global-interpretation}.  Conversely, suppose that
$T\cap X_n\in\mathcal F_n$ for every $n\geq1$.  For each $n$, let
$\mathcal T_n$ be the finite nonempty set of truncated profiles
\(r=(r_2,\ldots,r_n)\)
satisfying $S_n(r)=T\cap X_n$; for $n=1$, use the empty profile.
Connect a profile in $\mathcal T_{n+1}$ to its restriction in
$\mathcal T_n$.  This restriction does belong to $\mathcal T_n$,
because the coordinate $n+1$ acts only on candidates larger than
$n+1$.  The resulting rooted tree is infinite and finitely branching.
By K\H{o}nig's infinity lemma, it has an infinite path.  The compatible
residues along this path define a global profile $F\in\Omega$.  For
each $m\geq2$, membership of $m$ is determined by the finitely many
coordinates $2\leq k<m$, so the path gives
\[
m\in\mathcal S(F)
\quad\Longleftrightarrow\quad
m\in T.
\]
Hence $T=\mathcal S(F)\in\mathfrak M$.
\end{proof}

Proposition~\ref{prop:global-interpretation} shows that $N(n)$ counts
the distinct restrictions to $X_n$ of all global survivor sets in
$\mathfrak M$.

A fixed global profile $F$ determines the coherent chain
\[
\mathcal S(F)\cap X_1,\quad
\mathcal S(F)\cap X_2,\quad
\mathcal S(F)\cap X_3,\quad\ldots,
\]
where the restriction at level $n$ is one of the $N(n)$ finite
survivor sets.  Corollary~\ref{cor:global-realizability} shows conversely
that the global survivor sets are exactly the coherent infinite paths
whose restriction at every level belongs to $\mathcal F_n$.

\begin{remark}[Relation with OEIS sequences]
\label{rem:global-oeis-relation}
Let \(T\subseteq\{2,3,\ldots\}\) be the set of values of an increasing sequence recorded in the OEIS. We distinguish three relations with a
profile \(F\):
\[
T=\mathcal S(F),\qquad
|T\mathbin{\triangle}\mathcal S(F)|<\infty,\qquad
T=\mathcal S(F)\cap\mathbb P.
\]
They correspond respectively to exact generation, agreement up to a finite modification, and agreement after restriction to the primes.

The global model is not universal.  For example,
\[
\mathbb P\setminus\{3\}\notin\mathfrak M.
\]
Indeed, eliminating $3$ requires
\(F_2\equiv1\pmod2,\)
but the same coordinate then eliminates every odd prime. Thus Proposition~\ref{prop:global-interpretation} identifies the finite
complexity of the global family without asserting that every integer sequence, or every subsequence of the primes, belongs to that family.
\end{remark}

\section{Additional global survivor profiles}
\label{sec:global-profiles}

This appendix gives further explicit global profiles, some defined by arithmetic predicates and others by periodic residue rules. The propositions establish exact survivor-set identities. These examples are
independent of the asymptotic comparison for the finite model; distributional questions are stated separately.

\begin{definition}
A global profile $F=(F_n)_{n\ge 2}$ is called \emph{eventually periodic} if there
are integers $M\ge 1$ and $n_0\ge 2$, and integers
$c_0,\ldots,c_{M-1}$, such that
\[
F_n\equiv c_{n\bmod M}\pmod n
\qquad (n\ge n_0).
\]
It is called \emph{purely periodic} if such a representation holds with $n_0=2$.
Periodicity here refers to the integer representatives $c_{n\bmod M}$
in these congruences, since the residue classes $F_n$ have varying moduli.
\end{definition}

\subsection{Profiles defined using arithmetic predicates}

\begin{proposition}[A profile for an odd-part condition]\label{prop:coded-oddpart}
Let \(F^{\mathrm{op}}\) be defined by
\[
F^{\mathrm{op}}_n=
\begin{cases}
0, & \text{if }n\text{ is prime or }n\text{ is even},\\
1, & \text{if }n\text{ is odd and composite}.
\end{cases}
\]
For a positive integer $m$, write $m=2^a u$ with $u$ odd, and put
$\operatorname{odd}(m)=u$.  Then
\[
\mathcal S(F^{\mathrm{op}})
=
\{2,3,5\}\cup
\{q\in \mathbb P:\ \operatorname{odd}(q-1)=1
\text{ or } \operatorname{odd}(q-1)\in\mathbb P\}.
\]
\end{proposition}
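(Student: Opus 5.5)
The argument follows the proof of Proposition~\ref{prop:coded-twin}: first show that every survivor is prime, then decide exactly which primes survive. Only the odd composite moduli, which carry residue $1$, can affect a prime.

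\emph{Step 1: composites are eliminated.} Let $m\ge 4$ be composite and let $p$ be its least prime divisor. Then $p<m$ and $F^{\mathrm{op}}_p=0$. Since $m\equiv 0\pmod p$, the prime modulus $p$ eliminates $m$. Hence $\mathcal S(F^{\mathrm{op}})\subseteq\mathbb P$.

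\emph{Step 2: which moduli can eliminate a prime $q$.} Consider the moduli $k$ with $2\le k<q$.
\begin{itemize}
\item A modulus $k$ carrying residue $0$ eliminates $q$ only if $k\mid q$. This is impossible for $2\le k<q$. So prime moduli and even moduli never eliminate a prime.
\item For $q=2$ there are no moduli at all, so $2$ survives.
\end{itemize}
Thus a prime $q$ is eliminated exactly when there is an odd composite $k<q$ with $q\equiv 1\pmod k$, that is, with $k\mid q-1$.

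\emph{Step 3: the odd-part criterion.} Write $q-1=2^a u$ with $u=\operatorname{odd}(q-1)$. Every odd divisor of $q-1$ divides $u$, and any divisor of $q-1$ is automatically less than $q$. So the condition becomes: $u$ has an odd composite divisor.
\begin{itemize}
\item If $u$ is composite, then $u$ itself is such a divisor, and $q$ is eliminated.
\item If $u=1$, its only divisor is $1$, which is not composite.
\item If $u$ is prime, its only divisors are $1$ and $u$, neither of which is composite.
\end{itemize}
Hence $q$ survives iff $\operatorname{odd}(q-1)=1$ or $\operatorname{odd}(q-1)\in\mathbb P$.

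\emph{Step 4: the small primes.} We check $2,3,5$ directly: $\operatorname{odd}(1)=\operatorname{odd}(2)=\operatorname{odd}(4)=1$. So these primes already lie in the second set, and the explicit union with $\{2,3,5\}$ is harmless. This gives the stated equality.

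There is no serious obstacle. The only points needing care are that every divisor of $q-1$ is automatically a modulus below $q$, and that $u=1$ gives no composite divisor.
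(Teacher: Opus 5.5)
Your proof is correct and follows the paper's argument. Composites are eliminated by their least prime divisor, moduli carrying residue $0$ cannot eliminate a prime, and a prime $q$ is eliminated exactly when $\operatorname{odd}(q-1)$ is composite. The only difference is that you treat all primes uniformly and check that $2,3,5$ already satisfy the odd-part condition, whereas the paper restricts to $q>5$ and lists $2,3,5$ separately.
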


\begin{proof}
As in the previous proposition, every composite integer is eliminated by a
prime modulus.  Let $q>5$ be prime.  Prime moduli and even composite moduli do
not eliminate $q$, so only odd composite moduli matter, and all of them carry
residue $1$.  Thus $q$ is eliminated if and only if there exists an odd
composite $n<q$ such that
\(q\equiv 1 \pmod n,\)
equivalently, if and only if $n\mid(q-1)$ for some odd composite $n$.  This is
the same as saying that the odd part of $q-1$ is composite.  Therefore $q$
survives precisely when the odd part of $q-1$ is $1$ or an odd prime.  This
gives the asserted formula.
\end{proof}

\begin{proposition}[A profile involving prime powers]\label{prop:coded-prime-powers-one}
Let $F^{(1,\mathrm{pp})}$ be defined by
\[
F^{(1,\mathrm{pp})}_n=
\begin{cases}
0, & \text{if }n\text{ is prime, or }n\text{ is even, or }n\text{ is an odd prime power},\\
1, & \text{if }n\text{ is odd, composite, and not a prime power}.
\end{cases}
\]
Then
\[
\mathcal S(F^{(1,\mathrm{pp})})
=
\{2,3\}\cup\{q\in \mathbb P:\ \operatorname{odd}(q-1)\text{ is }1
\text{ or an odd prime power}\},
\]
where \(\operatorname{odd}(m)\) denotes the odd part of \(m\).
\end{proposition}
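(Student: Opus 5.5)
The plan is to argue exactly as in Propositions~\ref{prop:coded-twin} and~\ref{prop:coded-oddpart}. First I show that every survivor is prime. Then I classify, for a prime candidate \(q\), which moduli can eliminate it. The only new ingredient is translating the condition on divisors of \(q-1\) that are odd, composite and not prime powers into a condition on \(\operatorname{odd}(q-1)\).

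\emph{Step 1 (composites die).} Let \(m\ge 2\) be composite and let \(p\) be its least prime divisor. Then \(p<m\), and \(F^{(1,\mathrm{pp})}_p=0\) because \(p\) is prime. Since \(m\equiv 0\pmod p\), the modulus \(p\) eliminates \(m\). Hence \(\mathcal S(F^{(1,\mathrm{pp})})\subseteq\mathbb P\).

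\emph{Step 2 (which moduli can act on a prime).} Fix a prime \(q\) and a modulus \(2\le k<q\).
\begin{itemize}
\item If \(k\) is prime, even, or an odd prime power, then \(F_k=0\). Such a \(k\) eliminates \(q\) only if \(k\mid q\), which is impossible for \(1<k<q\).
\item Otherwise \(k\) is odd, composite and not a prime power, and \(F_k=1\). It eliminates \(q\) if and only if \(k\mid q-1\).
\end{itemize}
Conversely, any divisor \(k\) of \(q-1\) with \(k\ge2\) automatically satisfies \(k<q\), so it is an active modulus. Therefore \(q\) survives if and only if \(q-1\) has no divisor that is odd, composite and not a prime power.

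This criterion also covers \(q=2\) and \(q=3\) with no separate inspection. For them \(q-1\in\{1,2\}\) has no such divisor, and indeed \(\operatorname{odd}(q-1)=1\). So listing \(\{2,3\}\) separately in the statement is harmless.

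\emph{Step 3 (divisor condition in terms of the odd part).} An odd divisor of \(q-1\) is exactly a divisor of \(u=\operatorname{odd}(q-1)\). An odd integer is composite and not a prime power precisely when it has at least two distinct prime factors.
\begin{itemize}
\item If \(u\) has two distinct prime factors \(p_1\ne p_2\), then \(p_1p_2\) is an eliminating modulus, so \(q\) dies.
\item If \(u\) has at most one distinct prime factor, then \(u\) is \(1\) or an odd prime power. Every divisor of \(u\) is then \(1\) or a prime power, so no eliminating modulus exists and \(q\) survives.
\end{itemize}
Combining Steps~1--3 gives the asserted description of \(\mathcal S(F^{(1,\mathrm{pp})})\).

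I do not expect a real obstacle. The only point needing care is Step~2: one must check that the moduli with residue \(0\) (primes, even numbers, odd prime powers) can never divide a prime \(q\) larger than themselves. One must also check that the relevant modulus \(p_1p_2\) in Step~3 is automatically smaller than \(q\), since it divides \(q-1\).
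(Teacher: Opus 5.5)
Your proposal is correct and follows essentially the paper's argument. Composites are eliminated by a prime divisor; a prime $q$ can only be eliminated by an odd composite non-prime-power modulus dividing $q-1$; and this condition is then rephrased in terms of $\operatorname{odd}(q-1)$. Your explicit witness $p_1p_2$ and your uniform treatment of $q=2,3$ only make the paper's final equivalence slightly more explicit.
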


\begin{proof}
Every composite integer is again eliminated by a prime modulus.  Let $q>3$ be
prime.  Prime moduli, even composite moduli, and odd prime-power moduli
do not eliminate $q$; an odd composite modulus that is not a prime power
eliminates $q$ exactly when it divides $q-1$.  By
construction, the only odd composite moduli with residue $1$ are those that
are not prime powers.  Hence $q$ survives if and only if every odd composite
divisor of $q-1$ is an odd prime power.  This is equivalent to saying that the
odd part of $q-1$ is either $1$ or an odd prime power.
\end{proof}

\begin{proposition}[A shifted profile involving prime powers]\label{prop:coded-prime-powers-two}
Let $F^{(2,\mathrm{pp})}$ be defined by
\[
F^{(2,\mathrm{pp})}_n=
\begin{cases}
0, & \text{if }n\text{ is prime, or }n\text{ is even, or }n\text{ is an odd prime power},\\
2, & \text{if }n\text{ is odd, composite, and not a prime power}.
\end{cases}
\]
Then
\[
\mathcal S(F^{(2,\mathrm{pp})})
=
\{2,3,5\}\cup
\{q\in\mathbb P\setminus\{2\}:\ \operatorname{odd}(q-2)\text{ is }1
\text{ or an odd prime power}\}.
\]
\end{proposition}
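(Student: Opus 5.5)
The plan follows the template of Propositions~\ref{prop:coded-twin} and~\ref{prop:coded-prime-powers-one}. I would first show that every survivor is prime (or one of the small listed values), and then determine exactly which primes are eliminated by the composite moduli that carry residue $2$. All of $F^{(2,\mathrm{pp})}$'s prime coordinates are $0$. So any composite $m$ is eliminated by its least prime divisor $p<m$, since $m\equiv 0\equiv F^{(2,\mathrm{pp})}_p\pmod p$. Hence $\mathcal S(F^{(2,\mathrm{pp})})\subseteq\mathbb P$. Next I would check directly that $2,3,5$ survive. For $2$ there are no moduli. For $3$ the only modulus is $2$ with residue $0$. For $5$ the moduli $2,3,4$ all carry residue $0$ and none divides $5$.

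Now let $q\ge 7$ be prime, hence odd. Prime moduli carry residue $0$ and cannot eliminate $q$, because $k\mid q$ with $1<k<q$ is impossible. Even composite moduli also carry residue $0$ and fail for the same reason. Odd prime-power composite moduli likewise carry residue $0$. So the only moduli that can act are the odd composite $n<q$ that are not prime powers. These carry residue $2$ and eliminate $q$ exactly when $n\mid q-2$. The condition $n<q$ is automatic for any divisor $n\ge 3$ of $q-2$, so $q$ survives if and only if $q-2$ has no odd divisor that is composite and not a prime power. Since $q-2$ is odd, $q-2=\operatorname{odd}(q-2)$.

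The remaining step is elementary: an odd integer $u\ge1$ has a divisor that is composite and not a prime power if and only if $u$ has at least two distinct prime factors. If $p_1\ne p_2$ both divide $u$, then $p_1p_2$ is such a divisor. Conversely, if $u=1$ or $u=p^e$, every divisor is $1$ or a prime power. So $q$ survives if and only if $\operatorname{odd}(q-2)$ is $1$ or an odd prime power, where ``prime power'' includes primes, as the statement intends. This gives the inclusion of the right-hand side for $q\ge7$.

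The point I expect to need care is the reconciliation at the small values and with the excluded prime $2$. The prime $3$ has $\operatorname{odd}(1)=1$ and $5$ has $\operatorname{odd}(3)=3$, so both are already in the second set, and $2$ is in $\{2,3,5\}$. The union therefore equals $\mathcal S(F^{(2,\mathrm{pp})})$ exactly. I would also double-check the convention that a prime counts as an odd prime power, since without it $q=5$ and primes like $q=7$ (with $q-2=5$) would be misclassified.
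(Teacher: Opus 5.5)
Your proof is correct and follows the same route as the paper: composites are eliminated by a prime divisor carrying residue $0$, and for a prime $q>5$ only the odd composite non-prime-power moduli (residue $2$) can act, which they do exactly when they divide $q-2$. Your explicit lemma that such a divisor exists if and only if $q-2$ has two distinct prime factors, your direct check of $2,3,5$, and your note that primes must count as prime powers spell out steps the paper leaves implicit.
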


\begin{proof}
Every composite integer is eliminated by a prime modulus.  Let $q>5$ be prime.
Prime moduli, even composite moduli, and odd prime-power moduli do not
eliminate $q$; an odd composite modulus that is not a prime power
eliminates $q$ exactly when it divides $q-2$.  By
construction, those odd composite moduli carrying residue $2$ are precisely
the ones which are not prime powers.  Therefore $q$ survives if and only if
every odd composite divisor of $q-2$ is an odd prime power, which is
equivalent to the stated condition on the odd part of $q-2$.
\end{proof}

\begin{proposition}[A family for prime pairs of fixed gap]\label{prop:gap-2h-family}
Let \(h\ge 2\) be an integer, and define the profile
$F^{(h)}=(F_n^{(h)})_{n\ge 2}$ by
\[
F_n^{(h)}=
\begin{cases}
0, & \text{if $n$ is prime or even},\\
2h, & \text{if $n$ is odd and composite}.
\end{cases}
\]
Then every composite integer $m>2$ is excluded.  Moreover, for every prime
$q>2h+1$,
\[
q\in \mathcal S(F^{(h)})
\iff
q-2h\in\mathbb P.
\]
Hence, apart from a finite initial set of small primes, the survivor sequence
$\mathcal S(F^{(h)})$ is precisely the sequence of upper primes in prime pairs
of gap $2h$.
\end{proposition}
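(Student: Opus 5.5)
The plan follows the template of Proposition~\ref{prop:coded-twin}, with the residue $2$ replaced by $2h$. The argument has two parts. First, every composite $m>2$ is eliminated by a prime modulus. Second, for primes $q>2h+1$, the only moduli that can act on $q$ are the odd composite ones, and these carry residue $2h$.

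\emph{Composites.} Let $m>2$ be composite and let $p$ be its least prime divisor. Then $p<m$, and $F^{(h)}_p=0$ because $p$ is prime. Since $m\equiv 0\pmod p$, the modulus $p$ eliminates $m$.

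\emph{Primes $q>2h+1$.} The moduli fall into three groups.
\begin{itemize}
\item A prime modulus $p<q$ carries residue $0$. It would eliminate $q$ only if $p\mid q$, which is impossible.
\item An even composite modulus $k$ also carries residue $0$. It would eliminate $q$ only if $k\mid q$, which is again impossible, since $q$ is an odd prime because $q>2h+1\ge 5$.
\item An odd composite modulus $k<q$ eliminates $q$ exactly when $k\mid q-2h$.
\end{itemize}
Hence $q$ is eliminated if and only if $q-2h$ has an odd composite divisor $k<q$.

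We now translate this condition. Because $q>2h+1$, we have $q-2h\ge 2$, and $q-2h$ is odd. Every divisor of $q-2h$ is at most $q-2h<q$, so the restriction $k<q$ is automatic.
\begin{itemize}
\item If $q-2h$ is composite, then $q-2h$ is itself an odd composite divisor of $q-2h$ that is smaller than $q$, so $q$ is eliminated.
\item If $q-2h$ is prime, then its only divisors are $1$ and $q-2h$, neither of which is composite, so $q$ survives.
\end{itemize}
Since $q-2h\ge 2$ and is odd, it is either prime or composite; the value $1$ cannot occur. This gives $q\in\mathcal S(F^{(h)})\iff q-2h\in\mathbb P$.

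The final sentence of the statement follows directly. Survivors are $2$ together with primes. Apart from the finitely many primes $q\le 2h+1$, the survivors are exactly the primes $q$ with $q-2h$ prime, that is, the upper members of prime pairs with gap $2h$.

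I expect no real obstacle. The only points needing care are that $q-2h\ge 2$, which excludes the degenerate value $1$, and that every divisor of $q-2h$ is automatically below $q$. Both come from the hypothesis $q>2h+1$, which is why the finite initial set is set aside.
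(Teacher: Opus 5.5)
Your proof is correct and takes essentially the same route as the paper. Composites are eliminated by a prime divisor carrying residue $0$, and residue-$0$ moduli cannot eliminate a prime $q$; an odd composite modulus eliminates $q$ exactly when it divides $q-2h$, which happens if and only if $q-2h$ is composite, and your explicit checks that $q-2h\ge 2$ is odd and that its divisors lie below $q$ are the points the paper's shorter argument uses implicitly.
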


\begin{proof}
The exclusion of composite integers is the same as in
Proposition~\ref{prop:coded-twin}: an even composite is excluded modulo $2$,
and an odd composite is excluded modulo any proper prime divisor.  Let
$q>2h+1$ be prime.  An index carrying residue $0$ cannot exclude $q$,
so an exclusion must come from an odd composite $n<q$ carrying residue
$2h$.  This occurs exactly when $n\mid(q-2h)$.  The positive odd integer
$q-2h$ is less than $q$; it has such a composite divisor if and only if it
is composite.  Therefore $q$ survives exactly when $q-2h$ is prime.
\end{proof}

\begin{remark}
The same construction with \(h=1\) is the twin-prime profile
\(F^{\mathrm{tw}}\) of Proposition~\ref{prop:coded-twin}.
\end{remark}

\begin{proposition}[A profile for safe primes]
\label{prop:safe-primes-family}
Define the profile
\(F^{\mathrm{safe}}=(F_n^{\mathrm{safe}})_{n\ge2}\) by
\[
F_n^{\mathrm{safe}}=
\begin{cases}
1, & \text{if \(n\) is even and \(n/2\) is composite},\\
0, & \text{otherwise}.
\end{cases}
\]
Then
\[
\mathcal S(F^{\mathrm{safe}})
=
\{2,3\}\cup
\{q\in\mathbb P:\ (q-1)/2\in\mathbb P\}.
\]
Moreover, the transformed sequence
\[
\left(
\frac{q-1}{2}
\right)_
{q\in\mathcal S(F^{\mathrm{safe}}),\,q>3}
\]
is precisely the Sophie Germain prime sequence \seqnum{A005384}.
\end{proposition}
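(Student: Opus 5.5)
We verify the identity directly from the definition of $\mathcal S(F)$, in the style of Proposition~\ref{prop:coded-twin}. We first show that every composite is eliminated, and then decide exactly which primes survive. The profile has $F_n=0$ at every prime $n$ and every odd $n$. At every even $n$ it is $0$ unless $n/2$ is composite, in which case $F_n=1$.

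\emph{Step 1 (composites).} Let $m\ge4$ be composite and let $p$ be its least prime divisor. Then $p<m$, $F_p=0$ and $p\mid m$, so $m$ is eliminated at the prime modulus $p$. Hence $\mathcal S(F^{\mathrm{safe}})\subseteq\mathbb P$. The integers $2$ and $3$ survive by inspection: for $m=3$ only $k=2$ acts, with $F_2=0$ and $3\not\equiv0\pmod 2$.

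\emph{Step 2 (primes).} Let $q\ge5$ be prime. A modulus $k<q$ with $F_k=0$ would eliminate $q$ only if $k\mid q$, which is impossible since $2\le k<q$. So $q$ is eliminated if and only if there is an even $k<q$ with $k/2$ composite and $k\mid q-1$. Write $k=2j$ and $q-1=2s$, with $s=(q-1)/2\ge2$. The condition becomes: there is a composite $j$ with $j\mid s$; the constraint $2j<q$ is automatic because $j\le s$.

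\emph{Step 3 (divisor criterion).} We check that $s\ge2$ has a composite divisor if and only if $s$ is composite. If $s$ is composite, take $j=s$. If $s$ is prime, its only divisors are $1$ and $s$, neither composite. Hence a prime $q\ge5$ survives exactly when $(q-1)/2$ is prime. Together with Step 1 and the survival of $2$ and $3$, this gives the stated set. The value $q=5$ is consistent: $(5-1)/2=2$ is prime.

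\emph{Step 4 (Sophie Germain).} For survivors $q>3$, the map $q\mapsto(q-1)/2$ is increasing. Its image is exactly the set of primes $p$ with $2p+1$ prime, which is the Sophie Germain sequence \seqnum{A005384}; for instance $p=2$ comes from $q=5$. No step here is a real obstacle. The only point needing care is Step 2, where we must check that the restriction $k<q$ never removes a needed witness; this holds because $k=2s=q-1<q$.
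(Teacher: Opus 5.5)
Your proposal is correct and follows the paper's proof essentially step for step. Composites are removed by a prime divisor carrying residue $0$, and residue-$0$ moduli cannot remove a prime. The residue-$1$ moduli $2j$ with $j$ composite remove $q$ exactly when $j \mid (q-1)/2$, so $q$ survives exactly when $(q-1)/2$ is prime; you also check that the constraint $k<q$ is automatic, which the paper leaves implicit.
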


\begin{proof}
As in Proposition~\ref{prop:coded-twin}, every composite integer $m>3$ is
excluded: even composites are excluded modulo $2$, and odd composites modulo
a proper prime divisor.  Let $q>3$ be prime.  Every index carrying residue
$0$ cannot exclude $q$.  The only indices carrying residue $1$ are $n=2d$,
where $d$ is composite.
Such an index excludes $q$ exactly when $d\mid(q-1)/2$.  Hence $q$ is
excluded exactly when $(q-1)/2$ is composite, and it survives exactly when
$(q-1)/2$ is prime.  The final statement follows by writing $q=2p+1$.
\end{proof}

\subsection{Elementary identities}

The following elementary example uses a constant residue rule.

\begin{proposition}[A shift of the primes]\label{prop:minus-one-profile}
Let \(F^{(-1)}\) be defined by
\[
F^{(-1)}_n=n-1 \qquad (n\ge2).
\]
Then
\[
\mathcal S(F^{(-1)})
=
\{p-1:\ p\in\mathbb P,\ p\ge3\}.
\]
In particular, this survivor set is not a prime sequence.
\end{proposition}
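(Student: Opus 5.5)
We will show both inclusions for the profile $F_n=n-1$, which is the same as $F_n\equiv -1\pmod n$. With this profile an integer $m\ge2$ is eliminated exactly when some $2\le k<m$ has $m\equiv -1\pmod k$, that is, $k\mid m+1$. So $m$ survives if and only if $m+1$ has no divisor $k$ with $2\le k\le m-1$. The whole argument is a translation of this into a statement about $m+1$.

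The key step is to identify which divisors of $m+1$ can fall into the range $2\le k\le m-1$.

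\begin{itemize}
\item Any divisor $k$ of $m+1$ with $k<m+1$ satisfies $k\le (m+1)/2$.
\item When $m\ge 3$ we have $(m+1)/2\le m-1$, so every proper divisor $k\ge2$ of $m+1$ lies in the range $[2,m-1]$.
\end{itemize}

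Hence for $m\ge3$, $m$ survives if and only if $m+1$ has no proper divisor $\ge2$, that is, if and only if $m+1$ is prime. Writing $p=m+1$, this gives $p\ge4$ prime, hence $p\ge5$.

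The small cases are checked by hand.

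\begin{itemize}
\item $m=2$: there are no moduli $k<2$, so $2$ survives. This corresponds to $p=3$.
\item $m=3$: $m+1=4$ and the only modulus is $k=2$, which divides $4$. So $3$ is eliminated, consistent with $4$ being composite.
\end{itemize}

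Combining these, $\mathcal S(F^{(-1)})=\{p-1: p\in\mathbb P,\ p\ge3\}$.

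For the final remark, note that the survivor set contains $4=5-1$, which is not prime; more generally every element except $2$ is even. So the survivor set is not a sequence of primes.

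There is no real obstacle. The only care needed is the boundary inequality $(m+1)/2\le m-1$, which holds exactly when $m\ge3$, together with the separate treatment of $m=2$.
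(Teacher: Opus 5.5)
Your proof is correct and follows the same route as the paper: you rewrite elimination as $k\mid m+1$ for some $2\le k<m$, and conclude that $m$ survives exactly when $m+1$ is prime. You also make explicit the boundary check, which the paper leaves implicit, that every proper divisor $d\ge2$ of $m+1$ lies below $m$; in fact $d\le (m+1)/2<m$ already holds for all $m\ge2$, so your separate treatment of $m=2$ is harmless but not strictly needed.
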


\begin{proof}
An integer $m\ge 2$ belongs to $\mathcal S(F^{(-1)})$ if and only if
\[
m\not\equiv -1\pmod n
\qquad\text{for every }2\le n<m.
\]
Equivalently, there is no $n$ with $2\le n<m$ dividing $m+1$.  This happens
exactly when $m+1$ is prime.
\end{proof}

\subsection{Pure periodic profiles}

The defining rules for the following profiles are independent of primality,
compositeness, and factorization.

\begin{proposition}[A pure periodic profile for Fermat-type numbers]
\label{prop:fermattype-family}
Define the profile
\(F^{\mathrm{Fer}}=(F_n^{\mathrm{Fer}})_{n\ge2}\) by
\[
F_n^{\mathrm{Fer}}=
\begin{cases}
1, & \text{if \(n\) is odd},\\
0, & \text{if \(n\) is even}.
\end{cases}
\]
Then
\[
\mathcal S(F^{\mathrm{Fer}})
=
\{2\}\cup\{2^k+1:\ k\ge1\}.
\]
\end{proposition}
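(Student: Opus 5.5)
We have to show that the profile \(F^{\mathrm{Fer}}\) leaves exactly \(2\) and the numbers \(2^k+1\). The plan is to decide directly, for each integer \(m\ge 2\), whether some modulus \(2\le n<m\) eliminates it. Under \(F^{\mathrm{Fer}}\), an odd \(n\) eliminates \(m\) exactly when \(m\equiv1\pmod n\), that is, when \(n\mid m-1\). An even \(n\) eliminates \(m\) exactly when \(n\mid m\). So \(m\) survives if and only if \(m-1\) has no odd divisor \(n\) with \(3\le n<m\), and \(m\) has no even divisor \(n\) with \(2\le n<m\).

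First, \(m=2\) survives vacuously, since there is no modulus below \(2\). Next, take \(m\ge3\) with \(m\) even. Then \(n=2\) is an even divisor of \(m\) with \(2<m\), so \(m\) is eliminated. This accounts for every even integer except \(2\).

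Now take \(m\ge3\) odd. Every divisor of \(m\) is odd, so the even moduli never act on \(m\). Thus \(m\) survives if and only if \(m-1\) has no odd divisor \(n\) with \(3\le n<m\). Any odd divisor \(n\ge3\) of \(m-1\) automatically satisfies \(n\le m-1<m\). Hence the condition reduces to: \(m-1\) has no odd divisor exceeding \(1\). Since \(m-1\ge2\), this holds exactly when \(m-1=2^k\) with \(k\ge1\).

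Combining the even and odd cases gives \(\mathcal S(F^{\mathrm{Fer}})=\{2\}\cup\{2^k+1:k\ge1\}\). There is no real obstacle here. The only points needing care are the boundary checks: the constraint \(n<m\) is automatic for divisors of \(m-1\), and the case \(k=1\), i.e.\ \(m=3\), is consistent, since the only modulus acting on \(3\) is \(n=2\), which carries residue \(0\) while \(3\) is odd, so \(3\) survives.
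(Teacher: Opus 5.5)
Your proof is correct and follows essentially the same route as the paper. Even \(m>2\) is removed by the modulus \(2\), and even moduli cannot act on odd \(m\); an odd \(m\) therefore survives exactly when \(m-1\) has no odd divisor \(d>1\), where every such \(d\) automatically satisfies \(d<m\), and this happens precisely when \(m-1=2^k\).
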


\begin{proof}
Let $m>2$.

If $m$ is even, then $F_2^{\mathrm{Fer}}=0$, and since $m\equiv 0 \pmod 2$, the index $n=2$
excludes $m$.

Now suppose that $m$ is odd.  Then no even index can exclude $m$, since
$F_n^{\mathrm{Fer}}=0$ for even $n$ and an even modulus cannot divide an odd integer.  The
only possible exclusions come from odd indices $n<m$, for which $F_n^{\mathrm{Fer}}=1$.  Such
an index excludes $m$ exactly when
\(m\equiv 1 \pmod n,\)
that is,
\[
n\mid (m-1).
\]

If $m-1$ has an odd divisor $d>1$, then $d<m$, the index $n=d$ is odd, and it
excludes $m$.

Conversely, if $m-1$ is a power of $2$, then it has no odd divisor $>1$, so no
odd index can exclude $m$, and hence $m$ survives.

Therefore an odd integer $m$ survives if and only if
\(m-1=2^k\)
for some $k\ge 1$, i.e.
\[
m=2^k+1.
\]
Together with the initial survivor $2$, this proves
\[
\mathcal S(F^{\mathrm{Fer}})=\{2\}\cup\{2^k+1:\ k\ge 1\}.
\]
\end{proof}

\begin{remark}
This is a ``Fermat-type'' family in the sense that it produces the full
sequence $2^k+1$, not only those terms that happen to be prime.
\end{remark}

\begin{proposition}[A pure periodic profile for Mersenne-type numbers]
\label{prop:mersennetype-family}
Define the profile
\(F^{\mathrm{Mer}}=(F_n^{\mathrm{Mer}})_{n\ge2}\) by
\[
F_n^{\mathrm{Mer}}=
\begin{cases}
-1, & \text{if \(n\) is odd},\\
0, & \text{if \(n\) is even}.
\end{cases}
\]
Then
\[
\mathcal S(F^{\mathrm{Mer}})
=
\{2\}\cup\{2^k-1:\ k\ge2\}.
\]
\end{proposition}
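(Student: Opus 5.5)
The proof will mirror the argument for Proposition~\ref{prop:fermattype-family}, replacing the congruence $m\equiv 1\pmod n$ by $m\equiv -1\pmod n$. First, $m=2$ survives vacuously, since there is no index $2\le n<2$. For $m>2$ we split on the parity of $m$.

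\emph{Even $m>2$.} We have $F^{\mathrm{Mer}}_2=0$ and $m\equiv 0\pmod 2$, so the index $n=2<m$ excludes $m$. Hence no even $m>2$ survives. This is consistent with the claim, since every $2^k-1$ with $k\ge2$ is odd.

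\emph{Odd $m\ge3$.} An even index $n$ carries residue $0$. Since an even modulus cannot divide an odd integer, $m\not\equiv 0\pmod n$, so even indices never exclude $m$. An odd index $n<m$ carries residue $-1$, and it excludes $m$ exactly when $n\mid(m+1)$. So $m$ is excluded if and only if $m+1$ has an odd divisor $d$ with $3\le d<m$.

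\begin{itemize}
\item Suppose $m+1$ is not a power of $2$. Let $d$ be its odd part, so $d>1$ and $d$ is odd. Since $m+1$ is even, $d\le (m+1)/2<m$ for $m\ge 3$. Hence $n=d$ is a valid odd index that excludes $m$.
\item Suppose $m+1=2^k$. Then $m+1$ has no odd divisor greater than $1$, so no odd index excludes $m$ and $m$ survives. Since $m\ge3$, we get $k\ge2$.
\end{itemize}

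Combining the cases, an odd $m\ge3$ survives exactly when $m=2^k-1$ with $k\ge2$. Adding the initial survivor $2$ gives
\[
\mathcal S(F^{\mathrm{Mer}})=\{2\}\cup\{2^k-1:\ k\ge2\}.
\]

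The only step that needs care is the bound $d<m$ in the first bullet, which guarantees that the witnessing odd divisor is a legitimate index below $m$. It follows from the evenness of $m+1$. Note also that $m=3$ survives, since its only candidate index is $n=2$, which does not divide $3$; this matches $3=2^2-1$.
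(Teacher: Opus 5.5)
Your proposal is correct and follows essentially the same argument as the paper: even $m>2$ is excluded modulo $2$, and an odd $m$ survives exactly when $m+1$ has no odd divisor $d>1$. The bound $d\le (m+1)/2<m$ is what makes any such divisor a legitimate excluding index, so the survivors are the $m$ with $m+1$ a power of $2$.
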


\begin{proof}
Let $m>2$.

As in the proof of Proposition~\ref{prop:fermattype-family}, an even $m$ is
excluded modulo $2$, while no even modulus excludes an odd $m$.  For odd
$m$, an odd index $n<m$ excludes $m$ exactly when $n\mid(m+1)$.  Every
odd divisor $d>1$ of the even integer $m+1$ satisfies
$d\leq(m+1)/2<m$, so such a divisor supplies an excluding index.  Thus an
odd $m$ survives exactly when $m+1$ is a power of $2$, which gives the
stated set together with the initial survivor $2$.
\end{proof}

\begin{remark}
This is the exact analogue of Proposition~\ref{prop:fermattype-family} for
Mersenne-type numbers.
\end{remark}

\begin{proposition}[A pure periodic profile for powers of two]
\label{prop:powers-of-two-family}
Define the profile
\(F^{\mathrm{pow2}}=(F_n^{\mathrm{pow2}})_{n\ge2}\) by
\[
F_n^{\mathrm{pow2}}=
\begin{cases}
0, & \text{if \(n\) is odd},\\
1, & \text{if } n\equiv 2 \pmod 4,\\
-1, & \text{if } n\equiv 0 \pmod 4.
\end{cases}
\]
Then
\[
\mathcal S(F^{\mathrm{pow2}})=\{2^k:\ k\ge1\}.
\]
\end{proposition}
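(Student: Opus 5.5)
The plan is a direct case analysis on $m\ge 2$. For each $m$ we either exhibit a single index $n<m$ with $m\equiv F^{\mathrm{pow2}}_n\pmod n$, or show that no index eliminates $m$, using only parity and divisibility. The key observation is that $n=2$ falls in the class $n\equiv 2\pmod 4$, so $F^{\mathrm{pow2}}_2\equiv 1\pmod 2$. This single coordinate therefore eliminates every odd $m\ge 3$.

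\emph{Step 1 (odd $m$).} If $m\ge3$ is odd, then $2<m$ and $m\equiv 1\equiv F^{\mathrm{pow2}}_2\pmod 2$, so $m$ is excluded. No odd integer other than those below $3$ survives; in particular none lies in the survivor set.

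\emph{Step 2 (even $m$ that is not a power of two).} Write $m=2^a u$ with $a\ge1$ and $u>1$ odd. Then $u\le m/2<m$, and $u$ is an odd index with $F^{\mathrm{pow2}}_u=0$. Since $u\mid m$, we have $m\equiv 0\pmod u$, so the modulus $u$ eliminates $m$.

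\emph{Step 3 (powers of two survive).} The integer $m=2$ survives vacuously, because there is no index $2\le n<2$. Now let $m=2^k$ with $k\ge2$, and check each type of index $n<m$:
\begin{itemize}
\item $n$ odd: exclusion would need $n\mid m$, which is impossible for odd $n>1$.
\item $n\equiv2\pmod4$: exclusion would need $n\mid m-1$, but $n$ is even and $m-1$ is odd.
\item $n\equiv0\pmod4$: exclusion would need $n\mid m+1$, again an even number dividing an odd one.
\end{itemize}
Hence $2^k$ survives.

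Combining the three steps gives $\mathcal S(F^{\mathrm{pow2}})=\{2^k:k\ge1\}$. The main point to get right is the bookkeeping of Step 3, namely checking the residue classes $1$ and $-1$ against the parity of $2^k\mp1$. It is easy but must cover all three index types. Steps 1 and 2 are immediate once we notice that $n=2$ itself carries residue $1$.
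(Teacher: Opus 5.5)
Your proof is correct and follows essentially the same argument as the paper. The coordinate $n=2$, which carries residue $1$, eliminates every odd $m\ge3$. The odd part $u>1$, whose coordinate carries residue $0$, eliminates every even non-power of two. A parity check against the residues $0$ and $\pm1$ shows that $2^k$ survives, which you phrase as an even $n$ dividing the odd number $2^k\mp1$, where the paper says $2^k$ has even residue modulo an even modulus.
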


\begin{proof}
We first show that every power of $2$ survives.  Let $m=2^k$ with $k\ge 1$,
and let $2\le n<m$.

If $n$ is odd, then $F_n^{\mathrm{pow2}}=0$.  Since $\gcd(2^k,n)=1$, we have
\(2^k \not\equiv 0 \pmod n.\)

If $n$ is even, then $F_n^{\mathrm{pow2}}=\pm 1$, hence $F_n^{\mathrm{pow2}}$ is odd.  On the other hand, the
residue of $2^k$ modulo an even modulus is always even.  Therefore
\[
2^k \not\equiv F_n^{\mathrm{pow2}} \pmod n.
\]
So no index \(n<m\) excludes \(2^k\), and thus
\(2^k\in\mathcal S(F^{\mathrm{pow2}})\).

Conversely, let $m\ge 2$ be an integer that is not a power of $2$.

If $m$ is odd, then $m>2$ and
\[
m\equiv 1 \pmod 2.
\]
Since $F_2^{\mathrm{pow2}}=1$, the index $n=2$ excludes $m$.

Now suppose that $m$ is even but not a power of $2$.  Then
\(m=2^a u\)
for some integer $a\ge 1$ and some odd integer $u>1$.  Since $u<m$, $u$ is
odd, and $F_u^{\mathrm{pow2}}=0$, we have
\(m\equiv 0 \pmod u,\)
so the index $n=u$ excludes $m$.

Therefore the only survivors are the powers of $2$.
\end{proof}

Consider the eventually periodic pattern
\[
(0,0,0,0,0,1,1,1,1,0,0,0,0,1,1,1,1,0,0,0,\dots),
\]
indexed from \(n=1\).  Since profiles here begin at \(n=2\), this gives
the following definition.

\begin{proposition}[A pure periodic profile with a prime-valued survivor set]\label{prop:mod8-family}
Define the profile
\(F^{(8)}=(F_n^{(8)})_{n\ge2}\) by
\[
F_n^{(8)}=
\begin{cases}
0, & \text{if } 2\le n\le 5,\\
1, & \text{if } n\ge 6 \text{ and } n\equiv 0,1,6,7 \pmod 8,\\
0, & \text{if } n\ge 6 \text{ and } n\equiv 2,3,4,5 \pmod 8.
\end{cases}
\]
Then
\[
\mathcal S(F^{(8)})
=
\{2,3,5\}\cup
\bigcup_{\alpha\in\{1,2\}}
\{2^\alpha p+1:\ p,\,2^\alpha p+1\in\mathbb P,\ p\equiv5\pmod8\}.
\]
Equivalently, apart from the initial terms \(2,3,5\), the survivor
sequence consists exactly of those primes \(q\) for which
\(q-1=2^\alpha p\) for some \(\alpha\in\{1,2\}\) and some prime
\(p\equiv5\pmod8\).  The first terms are
\[
2,3,5,11,53,59,107,149,347,587,1019,1109,1307,1493,\ldots.
\]

\end{proposition}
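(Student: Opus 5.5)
The plan is to classify, for each integer \(m\ge2\), which indices can eliminate it. From the definition of \(F^{(8)}\), an index \(n<m\) with residue \(0\) eliminates \(m\) iff \(n\mid m\). Such indices are exactly \(2\le n\le5\) and \(n\ge6\) with \(n\equiv2,3,4,5\pmod 8\). An index with residue \(1\) has \(n\ge6\) and \(n\equiv0,1,6,7\pmod8\), and it eliminates \(m\) iff \(n\mid m-1\). First I check \(m\le 10\) by direct inspection; the survivors there are \(2,3,5\). The numbers \(7\) and \(9\) fall to \(6\mid m-1\) and \(3\mid 9\) respectively. From now on let \(m\ge 11\).

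\emph{Reduction via residue-\(0\) indices.} Even \(m\) is killed by \(n=2\), and multiples of \(3\) or \(5\) by \(n=3,5\). A prime \(p\ge7\) carries residue \(0\) exactly when \(p\equiv3,5\pmod8\). Hence a composite odd \(m\) having a prime factor \(p\equiv3,5\pmod 8\) is eliminated by \(n=p<m\).

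\emph{Analysis of residue-\(1\) indices.} Write \(m-1=2^\alpha u\) with \(u\) odd. If \(\alpha\ge3\), then \(n=8<m\) divides \(m-1\) and kills \(m\), so survivors have \(\alpha\in\{1,2\}\). For an odd divisor \(d>1\) of \(u\), two cases eliminate \(m\). If \(d\equiv\pm1\pmod8\), then \(d\ge7\) and \(d<m\), so \(n=d\) is a residue-\(1\) index dividing \(m-1\). If \(d\equiv3\pmod4\), then \(2d\equiv6\pmod8\), \(2d\ge6\), and \(2d\le m-1\), so \(n=2d\) does the job; this case includes \(d=3\) via \(n=6\). The remaining even divisors \(4d\) are \(\equiv4\pmod8\) and carry residue \(0\), so they are harmless. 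Likewise no divisor of \(m-1\) divisible by \(8\) exists when \(\alpha\le2\).

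Consequently \(m\) escapes all residue-\(1\) indices iff every odd divisor \(d>1\) of \(u\) satisfies \(d\equiv5\pmod8\). Since \(5\cdot5\equiv1\pmod8\), \(u\) cannot have two prime factors, counted with multiplicity. So \(u=1\) or \(u=p\) is a prime with \(p\equiv5\pmod8\). The case \(u=1\) gives \(m\le5\), which is excluded for \(m\ge11\). Thus \(m=2^\alpha p+1\) with \(\alpha\in\{1,2\}\) and \(p\equiv5\pmod 8\).

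\emph{Primality.} Such an \(m\) satisfies \(m\equiv 11\equiv3\) or \(m\equiv21\equiv5\pmod8\). If \(m\) were composite, not all of its prime factors could be \(\equiv\pm1\pmod8\), since products of such primes stay \(\equiv\pm1\). So it has a prime factor \(p'\equiv\pm3\pmod 8\) with \(p'<m\), and by the reduction step it is eliminated. Conversely, for a prime \(q=2^\alpha p+1\) of this shape, no residue-\(0\) index divides \(q\), and by the residue-\(1\) analysis none divides \(q-1\). Hence \(q\) survives.

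The main obstacle is the bookkeeping in the residue-\(1\) analysis. One must check that every divisor of \(m-1\) in the classes \(0,1,6,7\pmod8\) is correctly accounted for. One must also check the side conditions \(n\ge6\) and \(n<m\): for instance \(d=3\) needs \(2d=6\), and \(d=u\) needs \(d<m\). I would handle this by listing the divisors of \(m-1\) as \(d\), \(2d\), \(4d\) for odd \(d\) and reducing each modulo \(8\).
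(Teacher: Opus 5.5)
Your proof is correct and follows essentially the same argument as the paper. Both rest on four points: residue-$0$ indices eliminate only multiples; residue-$1$ indices eliminate exactly when they divide $m-1$; the divisors $d,2d,4d$ of $m-1$ are sorted modulo $8$; and $5\cdot5\equiv1\pmod8$ forces the odd part of $m-1$ to be a single prime $p\equiv5\pmod8$. The only difference is the order: the paper first excludes all composites (using the index $m-1$ when $m\equiv\pm1\pmod8$) and then analyzes primes, whereas you run the divisor analysis on every odd $m\ge11$ and deduce primality afterwards from $m\equiv3,5\pmod8$, so the paper's $m-1$ step is absorbed into your $\alpha\ge3$ and $d\equiv3\pmod4$ cases.
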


\begin{proof}
We first show that every composite integer $m>2$ is excluded.

If $m$ is even, then $F_2^{(8)}=0$, and
\(m\equiv 0 \pmod 2,\)
so $m$ is excluded.

Now let $m$ be odd and composite.

If $m\equiv 1$ or $7 \pmod 8$, then $m-1\ge 8$ and
\[
m-1\equiv 0 \text{ or } 6 \pmod 8,
\]
so $F_{m-1}^{(8)}=1$.  Since
\(m\equiv 1 \pmod{m-1},\)
the index $n=m-1$ excludes $m$.

If $m\equiv 3$ or $5 \pmod 8$, then not all prime factors of $m$ can be
congruent to $1$ or $7$ modulo $8$, since products of such residues are still
$1$ or $7$ modulo $8$.  Hence $m$ has a prime divisor $r<m$ with
\[
r\equiv 3 \text{ or } 5 \pmod 8.
\]
For such an $r$ we have $F_r^{(8)}=0$, and since $r\mid m$ we obtain
\[
m\equiv 0 \pmod r.
\]
Thus $m$ is excluded.

Therefore every survivor other than $2$ must be prime.

Now let $q>5$ be prime.  Since $q$ is prime, no index $n<q$ with $F_n^{(8)}=0$ can
exclude it.  Hence only indices with \(F_n^{(8)}=1\) can exclude \(q\),
and these do so
precisely when
\(q\equiv 1 \pmod n,\)
that is, when
\[
n\mid (q-1).
\]

If $q\equiv 1$ or $7 \pmod 8$, then $q-1\equiv 0$ or $6 \pmod 8$, so
$q-1\ge 6$ and $F_{q-1}^{(8)}=1$.  Since $q\equiv 1 \pmod{q-1}$, the prime $q$ is
excluded.  Hence any surviving prime $q>5$ must satisfy
\[
q\equiv 3 \text{ or } 5 \pmod 8.
\]
Therefore
\(q-1=2^\alpha m\)
with $\alpha\in\{1,2\}$ and $m$ odd.

We claim that $m$ must be a prime congruent to $5 \pmod 8$.

First, if $m$ had an odd prime divisor $r\equiv 1$ or $7 \pmod 8$, then
$F_r^{(8)}=1$ and $r\mid(q-1)$, so $q$ would be excluded.

Next, if $m$ had an odd prime divisor $r\equiv 3 \pmod 4$, then
\(2r\equiv 6 \pmod 8,\)
hence $F_{2r}^{(8)}=1$.  Since $2r\mid(q-1)$, the prime $q$ would again be
excluded.

Thus every odd prime divisor of $m$ must be congruent to $5 \pmod 8$.

Finally, if $m$ had at least two odd prime factors, counted with multiplicity,
then it would have a composite divisor $d\equiv 1 \pmod 8$, because
\[
5\cdot 5\equiv 1 \pmod 8.
\]
For such a divisor $d$ we would have $F_d^{(8)}=1$ and $d\mid(q-1)$, so $q$ would be
excluded.  Therefore $m$ has exactly one odd prime factor and it occurs to the
first power.  Hence
\(m=p\)
for some prime $p\equiv 5 \pmod 8$.

Conversely, suppose that
\(q-1=2^\alpha p\)
with $\alpha\in\{1,2\}$ and $p$ prime congruent to $5 \pmod 8$.
The divisors of $q-1$ are among
\[
1,\ 2,\ 4,\ p,\ 2p,\ 4p.
\]
Now
\[
F_2^{(8)}=0,\qquad F_4^{(8)}=0,\qquad F_p^{(8)}=0,
\]
and since
\[
2p\equiv 2 \pmod 8,\qquad 4p\equiv 4 \pmod 8,
\]
we also have
\[
F_{2p}^{(8)}=0,\qquad F_{4p}^{(8)}=0.
\]
Thus no divisor $n<q$ of $q-1$ satisfies $F_n^{(8)}=1$, so no index can exclude $q$.
Hence \(q\in\mathcal S(F^{(8)})\).

Therefore
\[
\mathcal S(F^{(8)})
=
\{2,3,5\}\cup
\bigcup_{\alpha\in\{1,2\}}
\{2^\alpha p+1:\ p,\,2^\alpha p+1\in\mathbb P,\ p\equiv5\pmod8\}.
\]
\end{proof}

A different period-$8$ rule leads to a multiplicative survivor condition.

\begin{proposition}[A second period-$8$ profile with a prime-valued survivor set]
\label{prop:mod8-bis-family}
Define the profile
$F^{(8_{\mathrm{bis}})}=(F_n^{(8_{\mathrm{bis}})})_{n\geq2}$ by
\[
F_n^{(8_{\mathrm{bis}})}=
\begin{cases}
1, & \text{if } n\equiv4,5,6\pmod8,\\
0, & \text{if } n\equiv0,1,2,3,7\pmod8.
\end{cases}
\]
Let
\[
\mathcal G_8:=
\left\{
u\geq1:
p\mid u\Longrightarrow p\equiv1\pmod8
\text{ for every prime }p
\right\}.
\]
Then
\[
\mathcal S(F^{(8_{\mathrm{bis}})})
=
\{2\}\cup
\{2u+1:\ u\in\mathcal G_8,\ 2u+1\in\mathbb P\}.
\]
Equivalently, apart from the initial survivor $2$, the survivor set
consists exactly of the primes $q$ for which every prime divisor of
$(q-1)/2$ is congruent to $1\pmod8$.  The first terms are
\[
2,3,83,179,227,467,563,1187,1283,1523,1619,1907,\ldots.
\]

\end{proposition}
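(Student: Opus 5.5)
The proof is a direct residue analysis, in the style of the proof of Proposition~\ref{prop:mod8-family}. It has three steps: first every integer other than $2$ and the primes $q\equiv 3\pmod 4$ is excluded; then surviving primes are characterized by the divisors of $q-1$; then $2$ and $3$ are checked by hand. Throughout I use the dichotomy already exploited in Appendix~\ref{sec:global-profiles}. An index $n$ carrying residue $0$ excludes $m$ iff $n\mid m$. An index carrying residue $1$, namely $n\equiv 4,5,6\pmod 8$, excludes $m$ iff $n\mid m-1$.

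\emph{Step 1: exclusions.} Every even $m>2$ is excluded by $n=2$, since $2\equiv 2\pmod 8$ carries residue $0$. Every odd $m\equiv 1\pmod 4$ with $m\ge 5$ is excluded by $n=4$: the index $4$ carries residue $1$, and $4<m$ with $4\mid m-1$. This handles odd composites $\equiv 1\pmod 4$ and all primes $\equiv1\pmod4$ at once. An odd composite $m\equiv 3\pmod 4$ has a prime factor $r\equiv 3\pmod 4$, so $r\equiv 3$ or $7\pmod 8$. Such an $r$ carries residue $0$, is a proper divisor of $m$, and hence excludes $m$. Consequently, apart from $2$, every survivor is a prime $q\equiv 3\pmod 4$.

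\emph{Step 2: surviving primes.} Let $q\equiv 3\pmod 4$ be prime and write $q-1=2u$ with $u$ odd. No residue-$0$ index excludes $q$. A residue-$1$ index $n$ excludes $q$ iff $n\mid 2u$ and $n\equiv 4,5,6\pmod 8$; the bound $n<q$ is automatic for a divisor of $q-1$. Since $4\nmid 2u$, the case $n\equiv 4$ never occurs. Either $n=d$ is an odd divisor of $u$ with $d\equiv 5\pmod 8$, or $n=2d$ with $d\mid u$ odd and $2d\equiv 6\pmod 8$, i.e.\ $d\equiv 3,7\pmod 8$. Thus $q$ survives iff every divisor of $u$ is $\equiv 1\pmod 8$.

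This condition is equivalent to $u\in\mathcal G_8$. Prime divisors of $u$ are themselves divisors, which gives one direction. Conversely, the residue class $1$ is closed under multiplication, which gives the other.

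\emph{Step 3: small cases and the converse.} The prime $3$ survives, because its only candidate index is $n=2$, which carries residue $0$. It corresponds to $u=1\in\mathcal G_8$, and $2$ survives trivially. Conversely, if $u\in\mathcal G_8$ and $q=2u+1$ is prime, then $u\equiv 1\pmod 8$. Hence $u$ is odd and $q\equiv 3\pmod 4$, so Step~2 applies and $q$ survives. This yields the stated description of $\mathcal S(F^{(8_{\mathrm{bis}})})$.

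The only real obstacle is the bookkeeping in Step~2. One must list exactly which divisors of $q-1$ land in the residue-$1$ classes $4,5,6\pmod 8$, and check that $d\equiv 7$ excludes $q$ through the even index $2d$ rather than through $d$ itself. The listed initial terms can be checked against this criterion; for example $83$ gives $u=41\equiv 1\pmod 8$.
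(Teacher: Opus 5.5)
Your proof is correct and follows essentially the same route as the paper. Residue-$0$ indices act by divisibility of $m$ and residue-$1$ indices by divisibility of $m-1$, and surviving primes are characterized by $4\nmid q-1$ together with the absence of divisors $d\equiv 5$ or $2d\equiv 6 \pmod 8$ of $q-1=2u$; the only difference is that invoking the index $4$ at the outset against every odd $m\equiv 1\pmod 4$ makes the paper's separate treatment of composites whose prime factors are all $\equiv 5\pmod 8$ (split by the parity of $\Omega(m)$) unnecessary.
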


\begin{proof}
We first show that every composite integer $m>2$ is excluded.  If $m$ is
even, then $F_2^{(8_{\mathrm{bis}})}=0$ and $m\equiv0\pmod2$.

Now let $m$ be odd and composite.  If $m$ has a prime divisor
$r\not\equiv5\pmod8$, then $F_r^{(8_{\mathrm{bis}})}=0$ and
$m\equiv0\pmod r$, so $r$ excludes $m$.  It remains to consider the case
in which every prime divisor of $m$ is congruent to $5\pmod8$.  Let
$\Omega(m)$ denote the number of prime factors of $m$, counted with
multiplicity.  If $\Omega(m)$ is even, then $m\equiv1\pmod8$, and hence
$m\equiv1\pmod4$.  Since $F_4^{(8_{\mathrm{bis}})}=1$, the index $4$
excludes $m$.  If $\Omega(m)$ is odd, then $\Omega(m)\geq3$.  The product
$d$ of any two prime factors of $m$, counted with multiplicity, is a
proper divisor of $m$ satisfying
\[
d\equiv5^2\equiv1\pmod8.
\]
Thus $F_d^{(8_{\mathrm{bis}})}=0$ and $m\equiv0\pmod d$, so $d$ excludes
$m$.  Therefore every survivor other than $2$ is prime.

Let now $q>2$ be prime.  An index $n<q$ with
$F_n^{(8_{\mathrm{bis}})}=0$ cannot exclude $q$, since this would require
$n\mid q$.  Hence $q$ is excluded exactly when $q-1$ has a divisor
congruent to $4$, $5$, or $6$ modulo $8$.

Suppose first that $q$ survives.  Since $F_4^{(8_{\mathrm{bis}})}=1$, we
must have $4\nmid(q-1)$.  Thus
\(q-1=2u\)
with $u$ odd.  Let $p$ be a prime divisor of $u$.  If
$p\equiv5\pmod8$, then $p\mid(q-1)$ and
$F_p^{(8_{\mathrm{bis}})}=1$, a contradiction.  If
$p\equiv3$ or $7\pmod8$, then
\(2p\equiv6\pmod8,\)
and the divisor $2p$ of $q-1$ also excludes $q$.  Consequently every
prime divisor of $u$ is congruent to $1\pmod8$, so $u\in\mathcal G_8$.

Conversely, suppose that $u\in\mathcal G_8$ and
\[
q=2u+1\in\mathbb P.
\]
Every divisor of $u$ is congruent to $1\pmod8$, and every divisor of
$2u$ is congruent to $1$ or $2\pmod8$.  Thus $q-1=2u$ has no divisor
congruent to $4$, $5$, or $6$ modulo $8$.  No index with residue value
$1$ excludes $q$, and, as noted above, no index with residue value $0$
can exclude a prime.  Hence $q\in\mathcal S(F^{(8_{\mathrm{bis}})})$.
\end{proof}

A period-$4$ family gives a third, parametrized mechanism.

\begin{proposition}[A period-$4$ family with a prime-pair tail]
\label{prop:period4-family}
Let $a\geq2$ be even, and define
$F^{(4,a)}=(F_k^{(4,a)})_{k\geq2}$ by
\[
F_k^{(4,a)}=
\begin{cases}
2a, & \text{if } k\equiv1\pmod4,\\
a,  & \text{if } k\equiv2\pmod4,\\
0,  & \text{if } k\equiv3\pmod4,\\
-a, & \text{if } k\equiv0\pmod4,
\end{cases}
\]
where each displayed value is interpreted modulo $k$.  For every odd
integer $m>2$,
\[
m\in\mathcal S(F^{(4,a)})
\]
if and only if
\[
d\nmid m
\qquad
\text{for every }2\leq d<m\text{ with }d\equiv3\pmod4,
\]
and
\[
d\nmid(m-2a)
\qquad
\text{for every }2\leq d<m\text{ with }d\equiv1\pmod4.
\]
Consequently, every composite survivor $m$ satisfies
\[
25\leq m\leq2a+1,
\]
and
\[
\mathcal S(F^{(4,a)})\cap\{2a+2,2a+3,\ldots\}
=
\{p+2a:\ p,p+2a\in\mathbb P,\ p\equiv3\pmod4\}.
\]
In particular, for
\[
a\in\{2,4,6,8,10\},
\]
the survivor set is prime-valued:
\[
\mathcal S(F^{(4,a)})\subseteq\{2\}\cup\mathbb P.
\]
For the five parameters occurring in the motivating computations, the
corresponding survivor sequences begin as follows:
\[
\begin{aligned}
a=2:\quad&
2,3,5,7,11,23,47,71,83,107,131,167,227,311,\ldots;\\
a=4:\quad&
2,3,5,7,11,19,31,67,79,139,199,271,367,439,\ldots;\\
a=6:\quad&
2,3,5,11,13,19,23,31,43,59,71,79,83,139,151,163,\ldots;\\
a=8:\quad&
2,3,5,7,13,17,19,23,47,59,83,167,179,227,239,347,\ldots;\\
a=10:\quad&
2,3,5,7,13,17,19,23,31,43,67,79,103,127,151,199,\ldots.
\end{aligned}
\]

\end{proposition}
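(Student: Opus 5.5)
The plan is to analyze, for a fixed odd candidate \(m>2\), which of the four residue types can ever eliminate it. After that, every claim reduces to elementary divisor bookkeeping modulo \(4\).

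\textbf{Step 1 (the two criteria).} Since \(a\) is even, for \(k\equiv 2\pmod 4\) and for \(k\equiv 0\pmod 4\) both \(k\) and the residue \(\pm a\) are even. So \(m\equiv \pm a\pmod k\) would force the odd number \(m\mp a\) to be divisible by the even number \(k\), which is impossible. Hence these moduli never eliminate odd \(m\). A modulus \(k\equiv 3\pmod 4\) with residue \(0\) eliminates \(m\) exactly when \(k\mid m\). A modulus \(k\equiv 1\pmod 4\) with residue \(2a\) eliminates \(m\) exactly when \(k\mid(m-2a)\). This gives the stated pair of conditions. Separately, \(k=2\) carries \(a\equiv 0\pmod 2\), so every even \(m>2\) is eliminated, while \(2\) itself survives.

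\textbf{Step 2 (composite survivors).} Let \(m\) be an odd composite survivor. The first condition excludes any proper divisor \(\equiv 3\pmod 4\), so every prime factor of \(m\) is \(\equiv 1\pmod 4\). In particular \(m\ge 25\) and \(m\equiv 1\pmod 4\). If \(m>2a+1\), put \(d=m-2a\). Because \(4\mid 2a\), we have \(d\equiv 1\pmod 4\) and \(d>1\), so \(d\ge 5\); also \(d<m\). The second condition with this \(d\), which trivially divides itself, then eliminates \(m\). Hence \(25\le m\le 2a+1\).

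\textbf{Step 3 (the tail).} For \(m\ge 2a+2\), Step 2 shows that a survivor is prime, and for a prime the first condition holds automatically. Put \(u=m-2a\), an odd integer with \(3\le u<m\). The second condition says that \(u\) has no divisor \(d\ge 2\) with \(d\equiv 1\pmod 4\). If some prime factor of \(u\) is \(\equiv 1\pmod 4\), it is such a divisor. If \(u\) is composite with all prime factors \(\equiv 3\pmod 4\), the product of two of them is a divisor \(\equiv 1\pmod 4\). Hence the condition is equivalent to \(u=p\) being a prime with \(p\equiv 3\pmod 4\). For the converse, if \(p\equiv 3\pmod 4\) and \(p\) and \(p+2a\) are both prime, then the only divisor of \(p\) that is at least \(2\) is \(p\) itself, which is \(\equiv 3\pmod 4\). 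So \(q=p+2a\ge 2a+3\) survives.

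\textbf{Step 4 (consequences).} For \(a\le 10\) we have \(2a+1\le 21<25\), so the range in Step 2 is empty and no composite survives; thus the survivor set lies in \(\{2\}\cup\mathbb P\). The displayed initial segments are then a finite check using the two criteria. For small \(m\le 2a+1\), the second condition with divisors of \(m-2a\) must be applied carefully, since \(m-2a\) may be nonpositive and every \(d\) divides \(0\).

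I expect the main obstacle to be the boundary bookkeeping rather than any deep step. One must make sure that \(d=m-2a\) really satisfies \(2\le d<m\) in Step 2, and that the small range \(m\le 2a+1\) is handled correctly when verifying the listed initial terms.
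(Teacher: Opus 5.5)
Your proof is correct and follows the paper's argument essentially step for step: parity rules out the even moduli, the residues $0$ and $2a$ give the two divisibility criteria, the choice $d=m-2a$ confines composite survivors to $[25,2a+1]$, and the tail follows by sorting the prime factors of $m-2a$ modulo $4$. Your caution about $m-2a=0$ is unnecessary, since $m-2a$ is odd for odd $m$; the listed initial segments, which the paper does not verify explicitly, do agree with your two criteria.
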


\begin{proof}
Since $a$ is even, $F_2^{(4,a)}\equiv0\pmod2$, so every even integer
$m>2$ is excluded.

Let $m>2$ be odd.  If $k$ is even, then $F_k^{(4,a)}$ is even, and hence
$m-F_k^{(4,a)}$ is odd; therefore an even index $k$ cannot exclude $m$.
For odd $k$, the definition gives
\[
F_k^{(4,a)}=
\begin{cases}
2a, & \text{if } k\equiv1\pmod4,\\
0,  & \text{if } k\equiv3\pmod4.
\end{cases}
\]
Thus an index $k\equiv3\pmod4$ excludes $m$ exactly when $k\mid m$,
whereas an index $k\equiv1\pmod4$ excludes $m$ exactly when
$k\mid(m-2a)$.  This proves the stated criterion.

Suppose that an odd composite integer $m$ survives.  It cannot have a
prime divisor congruent to $3\pmod4$, since such a divisor would exclude
$m$.  Hence every prime divisor of $m$ is congruent to $1\pmod4$.
It follows that
\(m\equiv1\pmod4\)
and $m\geq5^2=25$.  If $m>2a+1$, then
\(d:=m-2a\)
satisfies $2\leq d<m$ and $d\equiv1\pmod4$.  Since
$d\mid(m-2a)$, the criterion shows that $d$ excludes $m$, a
contradiction.  Therefore every composite survivor lies in
$[25,2a+1]$.

It remains to identify the tail.  Let $q>2a+1$.  By the preceding
paragraph, any surviving $q$ is prime.  Put
\(p:=q-2a.\)
Then $p>1$ is odd.  If $p$ has a prime divisor congruent to $1\pmod4$,
that divisor excludes $q$.  If every prime divisor of $p$ is congruent
to $3\pmod4$ and $p$ is composite, then the product of two prime
factors of $p$, counted with multiplicity, is a divisor of $p$
congruent to $1\pmod4$ and smaller than $q$; it also excludes $q$.
Hence a surviving $q$ must satisfy
\[
p\in\mathbb P,
\qquad
p\equiv3\pmod4.
\]

Conversely, suppose that $p$ and $q=p+2a$ are prime and
$p\equiv3\pmod4$.  No index congruent to $3\pmod4$ can divide $q$,
and the only positive divisors of $q-2a=p$ are $1$ and $p$, neither of
which is an admissible excluding index congruent to $1\pmod4$.
The criterion therefore gives
$q\in\mathcal S(F^{(4,a)})$, proving the tail identity.

Finally, if $a\in\{2,4,6,8,10\}$, then $2a+1\leq21$, so the interval
$[25,2a+1]$ is empty.  Hence no composite survivor exists.
\end{proof}

\begin{remark}
Propositions~\ref{prop:mod8-family},
\ref{prop:mod8-bis-family}, and
\ref{prop:period4-family} exhibit three distinct mechanisms arising
from short periodic residue rules.  The first period-$8$ family is
governed by prime values of two affine linear forms, the second by the
multiplicative semigroup generated by primes congruent to $1\pmod8$,
and the period-$4$ family has, after a finite initial segment, a
prime-pair tail of gap $2a$.

The prime-valued conclusion in Proposition~\ref{prop:period4-family}
does not hold for every even parameter.  For example,
\[
25\in\mathcal S(F^{(4,12)}).
\]
More generally, if $M$ is composite, every prime divisor of $M$ is
congruent to $1\pmod4$, and
\[
a=\frac{M-1}{2},
\]
then $a$ is even and $M\in\mathcal S(F^{(4,a)})$.  We make no claim
here that the resulting survivor sequences are new in the literature.
\end{remark}

\noindent The defining rules and OEIS relations of these profiles are summarized
in Table~\ref{tab:globalexamples}.

\begin{table}[ht]
\centering
\scriptsize
\setlength{\tabcolsep}{3pt}
\begin{tabular}{p{0.85in}p{1.05in}p{2.20in}p{1.55in}}
Profile & Defining rule & Survivor description & OEIS relation type \\
\hline

$F^{\mathrm{op}}$
& arithmetic predicate
& primes $q$ for which $\operatorname{odd}(q-1)$ is $1$ or prime
& finite modification: $2$ followed by \seqnum{A074781}
\\[4pt]

$F^{(1,\mathrm{pp})}$
& arithmetic predicate
& primes for which the odd part of $q-1$ is $1$ or a prime power
& exact generation: \seqnum{A077500}
\\[4pt]

$F^{(2,\mathrm{pp})}$
& arithmetic predicate
& $2$, together with primes $q>2$ for which the odd part of $q-2$ is $1$ or a prime power
& finite modification: $2,3$ followed by \seqnum{A267945}
\\[4pt]

$F^{(h)}$, $h\geq2$
& arithmetic predicate
& upper primes in pairs of gap $2h$, apart from initial survivors
& finite modification: \seqnum{A046132} for $h=2$;
  \seqnum{A046117} for $h=3$
\\[4pt]

$F^{\mathrm{safe}}$
& arithmetic predicate
& safe primes, apart from the initial survivors $2,3$
& finite modification: $2,3$ followed by \seqnum{A005385}
\\[4pt]

$F^{(-1)}$
& elementary identity
& $p-1$, for primes $p\geq3$
& finite modification: \seqnum{A006093} without its initial term $1$
\\[4pt]

$F^{\mathrm{Fer}}$
& pure periodic
& $2^k+1$, $k\geq0$
& exact generation: \seqnum{A000051}
\\[4pt]

$F^{\mathrm{Mer}}$
& pure periodic
& $2$, followed by $2^k-1$, $k\geq2$
& finite modification: \seqnum{A000225} without $0,1$, with $2$ prefixed
\\[4pt]

$F^{\mathrm{pow2}}$
& pure periodic
& $2^k$, $k\geq1$
& finite modification: \seqnum{A000079} without its initial term $1$
\\[4pt]

$F^{(8)}$
& pure periodic
& primes $q$ with $q-1=2^\alpha p$,
  $\alpha\in\{1,2\}$, $p\equiv5\pmod8$
& \multicolumn{1}{c}{---}
\\[4pt]

$F^{(8_{\mathrm{bis}})}$
& pure periodic
& primes $q$ for which every prime divisor of $(q-1)/2$ is
  congruent to $1\pmod8$
& \multicolumn{1}{c}{---}
\\[4pt]

$F^{(4,a)}$, $a\geq2$ even
& pure periodic
& finite exceptional set followed by primes $q$ such that
  $q-2a\in\mathbb P$ and $q-2a\equiv3\pmod4$
& \multicolumn{1}{c}{---}
\end{tabular}

\caption{Defining rules and verified OEIS relations of the additional global
profiles. A dash means that no OEIS relation is asserted here. The
distributional question associated with $F^{(8)}$ is given in
Conjecture~\ref{conj:mod8-density}.}
\label{tab:globalexamples}
\end{table}

\subsection{Distributional conjectures}

For the profiles in Propositions~\ref{prop:coded-twin},
\ref{prop:gap-2h-family}, and~\ref{prop:safe-primes-family}, infinitude
reduces respectively to infinitude of twin primes, prime pairs of gap $2h$,
and Sophie Germain primes.  For the
period-$4$ family of Proposition~\ref{prop:period4-family}, infinitude of the
tail is equivalent to the existence of infinitely many prime pairs
$p,p+2a$ with $p\equiv3\pmod4$.

For the pure periodic profile \(F^{(8)}\) of
Proposition~\ref{prop:mod8-family}, define
\[
A_8(x):=\#\{q\le x:\ q\in\mathcal S(F^{(8)})\}.
\]
Equivalently,
\[
A_8(x)
=
\#\left\{
q\le x:\ q\in\mathbb P,\ q-1=2^\alpha p,\ \alpha\in\{1,2\},\
p\in\mathbb P,\ p\equiv 5\pmod 8
\right\}
+O(1).
\]

\begin{conjecture}\label{conj:mod8-density}
There exists a constant $C_8>0$ such that
\[
A_8(x)\sim C_8\frac{x}{(\log x)^2}
\qquad (x\to\infty).
\]
\end{conjecture}

\section{Computational reproducibility}
\label{app:enumeration-code}

The ancillary Python file \path{enumeration.py} implements the dynamic
union recurrence of Proposition~\ref{prop:dynamic-enumeration}, exhaustive
enumeration of prime-admissible subsets for the values used in
Table~\ref{tab:admissible-comparison}, and the global survivor membership
test used in Appendix~\ref{subsec:global-profile}.  It uses only the Python
standard library and performs exact integer and bit-vector operations.  For
(n=5,10,15,20), it returns
\[
\begin{array}{c|rrrr}
n&5&10&15&20\\
\hline
N(n)&6&22&76&237\\
A_{\rm adm}(n)&10&45&170&535.
\end{array}
\]
The recorded computation also checks the upper-twin-prime example through
$50$.

\section*{Acknowledgments}
The first author is grateful to the second author for guidance and support
throughout the first author's doctoral studies, and to the Department of
Computer Science at Sapienza University of Rome for its research environment.

\end{document}